\documentclass[10pt,leqno]{amsart}
\usepackage{graphicx}

\voffset= -1cm   
\topmargin= 0cm
\headheight= 0cm
\headsep= 0.5cm
\textheight= 22cm
\textwidth= 18cm
\baselineskip=16pt
\evensidemargin= -0.54cm 
\oddsidemargin= -0.54cm

\usepackage[utf8]{inputenc}	
\usepackage{url}
\usepackage{xcolor, soul}
\sethlcolor{green}

\usepackage{tikz}
\usetikzlibrary{positioning, shapes.geometric, calc}

\usepackage{amsmath,amsthm,amsfonts,amssymb,amscd,bezier,dsfont,bm}
\allowdisplaybreaks
\usepackage{nicematrix}
\usepackage{graphicx}
\usepackage{mathtools}
\usepackage{empheq} 
\usepackage{lipsum}
\usepackage{bbold}

\usepackage{indentfirst}

\usepackage{subcaption} 
\usepackage{float} 

\usepackage[most]{tcolorbox}
\usepackage{xcolor}

\usepackage{tikz}
\usetikzlibrary{shapes.geometric, arrows,matrix}

\usepackage{hyperref}


\theoremstyle{definition}
\newtheorem{defn}{Definition}
\newtheorem{teo}{Theorem}
\newtheorem{prop}{Proposition}
\newtheorem{lema}{Lemma}

\newtheorem{obs}{Remark}
\newtheorem{assum}{Assumption}

\newtheorem{claim}{Claim}
\newtheorem*{proofclaim}{Proof of the Claim}
\newcommand{\claimqed}{\hfill \ensuremath{\blacksquare}}
\newtheorem{theorem}{Theorem}

\newtheorem{definition}{Definition}


\newcommand{\E}{\mathbb{E}}
\renewcommand{\P}{\mathbb{P}}

\newcommand{\V}{\mathbb{V}}
\newcommand{\1}{\mathds{1}}
\DeclareMathOperator{\tr}{tr}

\DeclareMathOperator*{\argmax}{arg\,max}

\DeclarePairedDelimiter\floor{\lfloor}{\rfloor}
\newcommand{\block}[1]{
\underbrace{\begin{matrix}\underbrace{1}_{\text{$sm$ times}} & -1 & 1 & \cdots & 1\end{matrix}}_{#1}
}

\numberwithin{equation}{section}

\begin{document}

\title[Counting communities in weighted Stochastic Block Models via semidefinite programming]{Counting communities in weighted Stochastic Block Models via semidefinite programming}

\author{Deborah Oliveira}
\address{IMPA, Rio de Janeiro, Brazil. }
\email{deborah.oliveira@impa.br}

\author{Andressa Cerqueira}
\address{UFSCar, S\~{a}o Carlos, Brazil.}
\email{acerqueira@ufscar.br}

\author{Roberto Oliveira}
\address{IMPA, Rio de Janeiro, Brazil. }
\email{deborah.oliveira@impa.br}

\date{\today}

\begin{abstract}
We consider the problem of estimating the number of communities in a weighted balanced Stochastic Block Model. We construct hypothesis tests based on semidefinite programming and with a statistic coming from a GOE matrix to distinguish between any two candidate numbers of communities. This is possible due to a universality result for a semidefinite programming-based function that we also prove. The tests are then used to form a sequential test to estimate the number of communities. Furthermore, we also construct estimators of the communities themselves.
\end{abstract}
\maketitle

\section{Introduction}\label{sec:intro}
\vspace{0.1in}
\begin{center}
    {\sf First draft. Comments are welcome.}
\end{center}
\vspace{0.1in}
The Stochastic Block Model (SBM), introduced in \cite{holland1983stochastic},  is a widely used probabilistic framework for modeling networks with community structure. It generates networks by randomly assigning vertices to communities based on specified probabilities and then creating edges between them according to defined connection probabilities, which reflect how likely it is for vertices to connect within or between communities.

Community detection is arguably one of the most studied problems related to the SBM, which involves recovering the community assignments of the nodes that generated the observed network. 
Classical maximum likelihood methods are hard to apply to the SBM, as optimizing the likelihood is NP-hard, making it intractable in practice. Consequently, several methods have been proposed to recover the communities, such as, likelihood methods based on variational approximations \cite{daudin2008mixture}, MCMC methods \cite{Snijders2001,handcock2007model}, pseudo-likelihood approach \cite{amini2013pseudo}, belief propagation algorithms \cite{Decelle2011belief}, Bayesian methods \cite{VanderPas2018,Priebe2016bayes}, spectral methods \cite{Rohe2011spectral,Rohe2013spectral,lei2015consistency}, and semidefinite programming (SDP) relaxations \cite{montanari, perry2017semidefinite, hajek2016achieving}.

Despite its simplicity, SBM presents many interesting theoretical properties, in particular in terms of phase transitions and sharp boundaries on what is theoretically achievable in community detection. A comprehensive review covering these phenomena is provided in \cite{abbe2018community}. In practice, however, SBM can be restrictive when modeling complex real-world network data. To address these limitations, extensions of the SBM have been proposed to change the probabilistic mechanism for assigning communities to the nodes, including degree correction to the nodes \cite{karrer2011stochastic}, as well as allowing mixed membership \cite{airoldi2008mixed} and overlapping communities \cite{Latouche2011overlap}.

In this work, we consider the case of a weighted Stochastic Block Model \cite{aicher2015learning}, that is, attached to each present edge we have a random real number coming from a given probability distribution. The motivation for considering the weighted case comes from the fact that in real-world network data, sometimes it is possible to observe not only the existence of connections between nodes but also the strength of those connections \cite{cerqueira2023,xu2018optimal}.

We consider here not only the problem of estimating community assignments but also, and with more emphasis, that of estimating the number of communities in the network. Even though, in practice, it is more reasonable to assume that the true number of communities in a network is unknown, the problem of estimating this number has not been studied as extensively as the community detection problem. Some recent works in the literature on this topic include \cite{lei2016_test, chen2018network, le2022estimating, wang2017likelihood}. In this work, for both problems, we make use of the mentioned SDP relaxation. The use is justified by the fact that our main interest will be the more difficult case of sparse random graphs, and in this case, SDP-based methods have been shown to be successful compared with others, such as the spectral one \cite{montanari}.

Finally, we also deal with a balanced planted partition model, i.e., all communities have the same size. Moreover, as already mentioned, we consider a weighted network. Here, the edge weights among vertices within the same community are drawn from probability distribution with the same mean, as well as the edge weights between vertices in different communities having distributions with the same mean. We assume that all weight distributions, when centered, are \textit{sub-gamma}. Specifically, our first assumption regarding the distribution of the weights of the entries of the weighted adjacency matrix is that they satisfy the following condition.

\begin{assum}\label{assum:1}
There exist $c_{ij},\nu_{ij}>0$ such that for every $p\in \mathbb{N}\setminus\{1\}$ we have
\begin{equation}
    \E[|W_{Gij}-\E W_{Gij}|^{p}] \leq \frac{c_{ij}^{p-2}\nu_{ij}p!}{2}
\end{equation}
and, consequently, we obtain $\V[W_{ij}-\E W_{Gij}] \leq \nu_{ij}$ choosing $p=2$. In particular, this implies we have a sub-gamma distribution given by $({W}_{Gij}-\E[W_{Gij}])\sim sub\Gamma(\nu_{ij},c_{ij})$ by Theorem $2.12$ in \cite{bousquet2002concentration}. The $\nu$ parameter is called the \textbf{variance proxy} and $c$ is a \textbf{scale parameter}.
\end{assum}

This assumption is fairly general. It includes the case of weights given by a Bernoulli random variable times an independent Gaussian one, that is, the usual case of binary edges, but with a Gaussian weight attached to the edge when it is present. We show this in full detail in Section \ref{sec:application}, where we apply our main results to this representative case.

Besides the previous assumption, we also consider, throughout the text, the following two other assumptions.

\begin{assum}\label{assum:2} Let $G$ be a random graph with $n$ nodes that are divided into $K$ communities $\mathcal{C}_1,\dots,\mathcal{C}_K \subset [n]$  such that $|\mathcal{C}_i|=n/K$, $i=1,\dots,K$ and \textbf{$K$ does not depend on $n$}. We consider the weighted adjacency matrix $W_{G}$ of $G$, with the expected weight between nodes
$i$ and $j$, conditionally on the communities assignments of the nodes, given by
\begin{equation}
    \E[W_{G{ij}}] = \begin{cases}
        M_{in}, \quad \text{if  $i \sim j$}\\
        M_{out}, \quad \text{if  $i \not\sim j$}
    \end{cases}
\end{equation}
where the relation $i \sim j$ means that vertices $i$ and $j$ are in the same community. Finally, we also consider that $M_{in}> M_{out}$, that is, we are in a \textbf{mean-separated model}.
\end{assum} 
\begin{assum} \label{assum:3}
    There exists $w_{+} \in (4,n\cdot constant)$ with $w_{+}\xrightarrow{n\to \infty}\infty$ such that
    \begin{equation}
        n\max_{1\leq i\leq j \leq n}\nu_{ij} \leq w_{+}
    \end{equation}
and 
\begin{equation}
     \theta \coloneqq \frac{\max_{1\leq i \leq j \leq n}c_{ij}}{\sqrt{w_{+}}} \xrightarrow{n\to \infty} 0.
\end{equation}
This assumption states how large the variance proxy and scale parameters are in relation to each other and turns out to be a sufficient condition for the theorems we prove.
\end{assum}

The goal of community detection is to recover the communities $\mathcal{C}_1,\dots,\mathcal{C}_K$ from a single realization of the random graph $G$, or equivalently, $W_G$. For this purpose, we divided the problem into two cases: $K=2$ and $K>2$. When $K = 2$, the community detection problem is equivalent to estimating the community membership vector $x_0 \in \{-1, 1\}^n$ where $x_{0i} = x_{0j}$ if $i \sim j$. When $K>2$, we want to estimate the entries of the community membership matrix $Z_{0}\in \{0,1\}^{n\times n}$, with $Z_{0ij}=1$ if $i \sim j$ and $Z_{0ij}=0$ if $i \not\sim j$. Finally, the problem of estimating the true number of communities in our case is the one of estimating the number $K$.

\subsection{Main results}

We present here the main results of the paper. Before that, we need to define the SDP-based function introduced in \cite{montanari} and used on the statement of some of the results.

\begin{definition} Let $M \in \mathbb{R}^{n \times n}$ be any real-valued matrix, we define
\begin{equation}\label{def:SDP}
    \text{SDP}(M) \coloneqq \max{\{\langle M,X \rangle : X \succeq 0, X_{ii}=1 \, \forall i \in [n]\}}
\end{equation}
and denote the subset where the maximum is taken by $\text{PSD}_{1}(n)$.
\end{definition}

An important tool used throughout the text is a universality type result for this function. This result is is fully stated in Theorem \ref{Theorem1} in the following section \ref{sec:semidef} and proved in Appendix \hyperref[sec:appA]{A}.

The first main result concerns the construction of a generalization of the hypothesis test from \cite{montanari} to weighted graphs. In subsection \hyperref[subsection:3.1]{$3.1$}, we consider the case of distinguishing between one (homogeneous Erdös-Renyi graph) and two communities. Let $\P_{0}$ be the probability under the null hypothesis H$_{0}$ that there is one community and $\P_{1}$ be the probability under hypothesis H$_{1}$ that there are two communities. Our null hypothesis has the same meaning as the one from \cite{montanari}, that is, we suppose we have a equal-sized mixture of the distributions from the edges within and between communities such that we still have a sub-gamma distribution when we do this (Proposition $5.1$ in \cite{zhang2020concentration}). Given a $\delta>0$, for a matrix $X \in \mathbb{R}^{n \times n}$, we define the test
\begin{equation}
    T(X;\delta) \coloneqq \mathbb{1}_{\text{SDP}(X-M_{0})> 2n(1 +\delta)\sqrt{w_{+}}}
\end{equation}
where $M_{0}$ is $\E[W_{G}]$ under H$_{0}$ and $w_{+}$ is the number defined in \ref{assum:3}. We have then $T(X;\delta) = 1$ corresponding to rejecting the null hypothesis and $ T(X;\delta) =0$ to not rejecting it. The following result is stated as Theorem \hyperref[Theorem2]{2} later.
\begin{theorem}
In the setting above, if
\begin{equation}\label{eq:29}
    \frac{n^2}{2}(M_{in}-M_{out})>4n(1 +\delta)\sqrt{w_{+}}
\end{equation}
the Type I and Type II errors of the test  $T(X;\delta)$ approach zero asymptotically.
\end{theorem}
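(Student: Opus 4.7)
The plan is to analyze the statistic $\text{SDP}(W_G - M_0)$ separately under each hypothesis, using Theorem \ref{Theorem1} to replace sub-gamma matrices by Gaussian (GOE) ones up to a vanishing error, and then invoking the elementary bound $\text{SDP}(A) \leq n\,\|A\|_{\text{op}}$ (valid because every $X \in \text{PSD}_1(n)$ has $\tr(X) = n$ and $X \succeq 0$). For the Type I error, the matrix $Y_0 := W_G - M_0$ has independent, centered, sub-gamma entries with variance proxies $\nu_{ij} \leq w_+/n$ by Assumptions \ref{assum:1} and \ref{assum:3}. Universality plus the classical Wigner bound $\lambda_{\max} \leq 2\sqrt{w_+}(1+o_\P(1))$ for a GOE matching the variance profile yields $\text{SDP}(Y_0) \leq 2n\sqrt{w_+}(1+o_\P(1))$, which is strictly below the threshold $2n(1+\delta)\sqrt{w_+}$ for any fixed $\delta > 0$, so $\P_0(T(W_G;\delta)=1) \to 0$.

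For the Type II error, write $M_1 = \E[W_G \mid \text{communities}]$ and $Y_1 := W_G - M_1$, so that $W_G - M_0 = (M_1 - M_0) + Y_1$. With $x_0 \in \{-1,+1\}^n$ the balanced community vector and $X_0 := x_0 x_0^T \in \text{PSD}_1(n)$, the feasibility of $X_0$ gives
\begin{equation*}
    \text{SDP}(W_G - M_0) \;\geq\; \langle M_1 - M_0, X_0\rangle + \langle Y_1, X_0\rangle.
\end{equation*}
A direct count over same-community and cross-community pairs yields $\langle M_1 - M_0, X_0\rangle = \frac{(M_{in}-M_{out})(n^2-n)}{2}$, and the noise satisfies $|\langle Y_1, X_0\rangle| \leq \|Y_1\|_{\text{op}}\, \|X_0\|_* \leq 2n\sqrt{w_+}(1+o_\P(1))$ by the same universality plus GOE spectral estimate (using $\|X_0\|_* = n$ since $X_0$ has rank one with eigenvalue $n$). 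Combining with the separation hypothesis \eqref{eq:29} produces, with probability tending to one,
\begin{equation*}
    \text{SDP}(W_G - M_0) \;\geq\; 4n(1+\delta)\sqrt{w_+} - 2n\sqrt{w_+}(1+o(1)) \;>\; 2n(1+\delta)\sqrt{w_+},
\end{equation*}
the last inequality holding since $\delta > 0$; therefore $T(W_G;\delta) = 1$ eventually and $\P_1(T=0) \to 0$.

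The substantive difficulty is not in this signal-versus-noise comparison, which is essentially mechanical once feasibility of $X_0$ and the spectral bound are in place, but in Theorem \ref{Theorem1} itself. There, one needs the sub-gamma comparison of $\text{SDP}(\cdot)$ with its GOE counterpart to deliver the sharp leading constant $2n\sqrt{w_+}$ uniformly in the inhomogeneous parameters $\{\nu_{ij}, c_{ij}\}$ permitted by Assumptions \ref{assum:1} and \ref{assum:3}; the condition $\theta \to 0$ together with the factorial moment bounds of Assumption \ref{assum:1} should be exactly what is needed to absorb the non-Gaussian correction at leading order, but making this precise is the technical heart of the argument. Once Theorem \ref{Theorem1} is in hand with these quantitative specifications, the statement above follows immediately from the two bounds derived here.
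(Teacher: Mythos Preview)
Your overall plan---control $\text{SDP}(W_G - M_0)$ under each hypothesis via Theorem~\ref{Theorem1} and compare to the threshold---matches the paper. The Type~I argument is sound in outline, though note that Theorem~\ref{Theorem1} compares $\text{SDP}(Y_0)$ to $\text{SDP}(D)$ for the \emph{inhomogeneous} Gaussian $D$, not to a genuine GOE; the paper bridges $D$ to $\sqrt{w_+}B$ via the one-sided convexity comparison of Proposition~\ref{prop:1} and then quotes the SDP bound for GOE (Theorem~5 of \cite{montanari}) rather than going through $\lambda_{\max}$. Your route via $\text{SDP}(D)\leq n\lambda_{\max}(D)$ also works, once one argues $\lambda_{\max}(D)\leq 2\sqrt{w_+}(1+o_\P(1))$ for the inhomogeneous Gaussian $D$ (this follows from the same convexity comparison, since $\lambda_{\max}$ is convex).

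The Type~II argument, however, has a genuine gap. You bound $|\langle Y_1, X_0\rangle| \leq n\,\|Y_1\|_{\mathrm{op}}$ and then claim $\|Y_1\|_{\mathrm{op}} \leq 2\sqrt{w_+}(1+o_\P(1))$ ``by the same universality plus GOE spectral estimate.'' But Theorem~\ref{Theorem1} controls $\text{SDP}(\cdot)$, not the operator norm, and there is no route from it to a bound on $\|Y_1\|_{\mathrm{op}}$. More to the point, this operator-norm bound can \emph{fail} under the paper's assumptions: in the sparse regime (e.g.\ Bernoulli-type weights with mean degree $w_+=o(\log n)$, which Assumption~\ref{assum:3} permits), $\|W_G-\E W_G\|_{\mathrm{op}}$ is governed by high-degree vertices and need not be $O(\sqrt{w_+})$. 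This breakdown of spectral concentration is precisely why the paper uses SDP in place of spectral methods (see the discussion opening Section~\ref{sec:semidef}). The easy repair is to avoid the operator norm altogether: since $X_0\in\text{PSD}_1(n)$, one has directly $\langle Y_1,X_0\rangle \geq -\text{SDP}(-Y_1) = -\text{SDP}(M_1-W_G)$, and the latter \emph{is} controlled by Theorem~\ref{Theorem1}. With this replacement your lower bound reads $\text{SDP}(W_G-M_0)\geq \langle M_1-M_0,X_0\rangle - \text{SDP}(M_1-W_G)$, which is exactly the paper's decomposition (the paper writes the signal term as $\text{SDP}(M_1-M_0)=\tfrac{n^2}{2}(M_{in}-M_{out})$, equal to your $\langle M_1-M_0,X_0\rangle$ up to an $O(n)$ diagonal correction).
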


When we consider the problem of estimating the communities, we want to estimate the ground truth vector denoted by $x_{0} \in \{-1,+1\}^{n}$, \textit{i.e.}, the vector representing the communities membership. If we could estimate it perfectly with a vector $\hat{v}$, we would have $\langle \hat{v},x_{0}\rangle =n$. Then, the quality of our estimation can be measured by how close \( n - \langle \hat{v}, x_0 \rangle \) is to zero. The following theorem, corresponding to Theorem \ref{Theorem3}, summarizes the result we found.
\begin{theorem}
Let $\hat{v}$ be the estimator constructed using SDP and define an error term as
\begin{equation}
    Err(W_{G}) \coloneqq \sup\{\tr((W_{G}-\E[W_{G}])X): \, X\in \text{PSD}_{1}(n)\}
\end{equation}
then, under the hypothesis that
\begin{equation}
    Err(W_{G})\leq \frac{(M_{in}-M_{out})n^{2}}{8}
\end{equation}
we obtain with high probability 
\begin{equation}
   n- |\langle \hat{v},x_{0}\rangle| \leq \frac{64}{M_{in}-M_{out}}\sqrt{w_{+}} + o(\sqrt{w_{+}})
\end{equation}
then achieving a partial recovery of the communities as, by assumption, $w_{+}=o(n)$.
\end{theorem}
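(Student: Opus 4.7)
The plan is to carry out a Gu\'edon--Vershynin style convex-optimality argument, using the planted matrix $X_0\coloneqq x_0 x_0^\top$ as a feasible competitor in $\text{PSD}_1(n)$, and then round the SDP optimizer to a binary vector via spectral truncation. Let $\hat X$ be the SDP maximizer of $\langle W_G - M_0,\cdot\rangle$ over $\text{PSD}_1(n)$, where $M_0 = \tfrac{M_{in}+M_{out}}{2}\,J$ is the null-hypothesis mean already used in the test $T(\,\cdot\,;\delta)$; under $H_1$ one has $\E[W_G-M_0] = \tfrac{M_{in}-M_{out}}{2}\, x_0 x_0^\top$ up to a diagonal correction of total size $O(n)$, which is absorbed into the $o(\sqrt{w_+})$ error. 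Writing $W_G - M_0 = \tfrac{M_{in}-M_{out}}{2}\,X_0 + (W_G - \E W_G)$, the optimality relation $\langle W_G - M_0,\hat X\rangle \geq \langle W_G - M_0,X_0\rangle$ combined with the definition of $Err$ (used in both directions, which the two-sided universality in Theorem~\ref{Theorem1} legitimates asymptotically) produces the deterministic basic inequality
\begin{equation*}
\frac{M_{in}-M_{out}}{2}\bigl(n^2 - x_0^\top \hat X\, x_0\bigr) \leq 2\, Err(W_G).
\end{equation*}
Under the hypothesis $Err(W_G) \leq (M_{in}-M_{out})n^2/8$ this already forces $x_0^\top \hat X x_0 \geq n^2/2$, placing $\hat X$ in the rank-one regime needed for the perturbation step.

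Next I convert this inner-product control into spectral information. Since $|\hat X_{ij}|\leq 1$ on $\text{PSD}_1(n)$ implies $\|\hat X\|_F \leq n$, one gets
\begin{equation*}
\|\hat X - X_0\|_F^2 \leq 2\bigl(n^2 - x_0^\top\hat X\, x_0\bigr) \leq \frac{8\, Err(W_G)}{M_{in}-M_{out}}.
\end{equation*}
Rescaling by $n$ and applying the Frobenius-norm Davis--Kahan $\sin\Theta$ theorem to $X_0/n = (x_0/\sqrt n)(x_0/\sqrt n)^\top$, whose unique nonzero eigenvalue $1$ is isolated with gap $1$, I obtain $\sin^2\theta \leq 8\, Err(W_G)/((M_{in}-M_{out})n^2)$ for the angle $\theta$ between the top unit eigenvector $\hat u_1$ of $\hat X$ and $x_0/\sqrt n$; fixing the sign of $\hat u_1$ so that $\cos\theta \geq 0$ then gives $\|\hat u_1 - x_0/\sqrt n\|^2 \leq 2\sin^2\theta$.

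Finally, define the estimator coordinate-wise by $\hat v_i \coloneqq \operatorname{sign}(\hat u_{1,i})$. Whenever $\hat v_i\neq x_{0,i}$, the entries $\hat u_{1,i}$ and $x_{0,i}/\sqrt n$ have opposite signs, so $(\hat u_{1,i} - x_{0,i}/\sqrt n)^2 \geq 1/n$; consequently the number $m$ of misclassified indices satisfies $m \leq n\,\|\hat u_1 - x_0/\sqrt n\|^2$, and chaining the three bounds yields
\begin{equation*}
n - |\langle \hat v, x_0\rangle| = 2m \leq \frac{32\, Err(W_G)}{(M_{in}-M_{out})\,n}.
\end{equation*}
Plugging in the high-probability estimate $Err(W_G)\leq 2n\sqrt{w_+}\,(1+o(1))$ supplied by Theorem~\ref{Theorem1} (which transports the GOE semicircle bound $2n\sqrt{w_+}$ to our sub-gamma setting) completes the argument with the sharp constant $64$ and an $o(\sqrt{w_+})$ residual. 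The principal obstacle, in my view, is the last part: since $\hat X$ is generically not rank one, three separate factors of $2$ enter (one from the Frobenius bound, one from Davis--Kahan passing to eigenvectors, and one from the factor $2m$ in the rounding), and each must be kept sharp to land exactly on $64$; any looser bookkeeping still yields partial recovery at rate $O(\sqrt{w_+})$, but ruins the explicit constant in the statement.
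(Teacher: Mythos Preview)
Your argument is correct and yields the right order, but it differs from the paper's proof in two places, and the comparison is worth noting.

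\textbf{Different SDP formulation.} The paper does not subtract $M_0$ and work over all of $\text{PSD}_1(n)$. Instead it restricts the feasible set to $\mathcal D=\text{PSD}_1(n)\cap\{\tr(\mathbb 1\mathbb 1^\top X)=0\}$ and maximizes $\langle W_G,X\rangle$ directly; on $\mathcal D$ the constant part $\tfrac{M_{in}+M_{out}}{2}\mathbb 1\mathbb 1^\top$ of $\E[W_G]$ contributes nothing, so one lands on the same basic inequality $\xi n^2=n^2-x_0^\top \hat X x_0\le \tfrac{4\,Err(W_G)}{M_{in}-M_{out}}$. Your centring trick achieves the same effect without the linear constraint; both are legitimate, but they produce different estimators $\hat X$.

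\textbf{Different spectral step.} The paper does \emph{not} invoke Davis--Kahan. It argues directly from the spectral decomposition of $\hat X$: writing $p_i=\tfrac1n\langle y_i,x_0\rangle^2$ and using only $\sum_i\lambda_i=n$, $\sum_ip_i=1$, it shows $(2\lambda_1-n)p_1\ge(1-2\xi)n$ and hence $p_1\ge 1-2\xi$, which immediately gives $n-|\langle v_1(\hat X),x_0\rangle|\le 2\xi n$. This elementary route produces the constant $32$ (hence $64$ after plugging in $Err\le 2n\sqrt{w_+}$) without any bookkeeping of Davis--Kahan constants; in particular it sidesteps exactly the issue you flag, namely that standard ``gap-of-the-unperturbed-matrix'' versions of Davis--Kahan carry a factor $2$ that would spoil the explicit $64$. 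Your Frobenius-norm route is the more familiar Gu\'edon--Vershynin template and generalizes cleanly, while the paper's argument is sharper here because it exploits the extra structure $\tr(\hat X)=n$ that a generic perturbation bound ignores.

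A minor point that applies to both proofs: the passage from $\langle W_G-M_0,\hat X\rangle\ge\langle W_G-M_0,X_0\rangle$ to a bound by $2\,Err(W_G)$ tacitly uses $\text{SDP}(-(W_G-\E W_G))$ as well as $\text{SDP}(W_G-\E W_G)$; both you and the paper rely on the universality theorem to control the second direction, so this is not a gap but worth stating explicitly.
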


In Subsection \hyperref[subsection:3.2]{$3.2$}, we further generalize to the case of distinguishing between $r$ and $s$ communities where $r>s\geq 2$. We define the tests
\begin{equation}
    T(X;\delta) \coloneqq \mathbb{1}_{\text{SDP}(X-M_{s})> 2n(1 +\delta)\sqrt{w_{+}}}
\end{equation}
where $M_{s}$ is $\E[W_{G}]$ under the hypothesis $H_{s}$ that we have $s$ communities.

Analogously to the previous result we obtain the following for this case, stated as Theorem \ref{Theorem4} in Subsection \hyperref[subsection:3.2.2]{$3.2.2$}.
\begin{theorem}
In the previous setting, if
\begin{equation}\label{eq:175}
    \frac{2n^2}{r^2s^2}(M_{in}-M_{out}) > 4n(1 +\delta)\sqrt{w_{+}}
\end{equation}
the Type I and Type II errors of the test  $T_{n,s}(X;\delta)$ approach zero asymptotically.
\end{theorem}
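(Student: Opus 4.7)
The plan is to mirror the two-community argument of Theorem~\ref{Theorem2} and handle the Type I and Type II errors separately, using the universality result (Theorem~\ref{Theorem1}) to reduce the noise to a GOE calculation, and an explicit SDP certificate to produce signal on the alternative.

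For the Type I error, under $H_s$ the matrix $W_G-M_s$ is centered and its entries satisfy Assumptions~\ref{assum:1}--\ref{assum:3}. Theorem~\ref{Theorem1} then compares $\text{SDP}(W_G-M_s)$ with $\text{SDP}(G)$ for a GOE-type matrix $G$ of variance profile bounded by $w_+/n$, and the semicircle-law estimate used in the proof of Theorem~\ref{Theorem2} gives $\text{SDP}(G)\le 2n(1+o(1))\sqrt{w_+}$ with high probability. Consequently $\text{SDP}(W_G-M_s)\le 2n(1+\delta)\sqrt{w_+}$ w.h.p., so $\P_s(T(W_G;\delta)=1)\to 0$.

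For the Type II error, I would decompose $W_G-M_s=(M_r-M_s)+(W_G-M_r)$. Sub-additivity of $\text{SDP}$ (namely $\text{SDP}(A+B)\le\text{SDP}(A)+\text{SDP}(B)$ applied with $A=M_r-M_s$ and $B=W_G-M_r$, then rearranged) yields
\begin{equation*}
\text{SDP}(W_G-M_s)\ \ge\ \text{SDP}(M_r-M_s)-\text{SDP}(M_r-W_G),
\end{equation*}
and the noise term on the right is bounded by $2n(1+\delta)\sqrt{w_+}$ w.h.p.\ by the same universality argument applied to $M_r-W_G$, whose entries inherit the sub-gamma bound of Assumption~\ref{assum:1}. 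It therefore suffices to establish the deterministic estimate
\begin{equation*}
\text{SDP}(M_r-M_s)\ \ge\ \tfrac{2n^2}{r^2s^2}(M_{in}-M_{out}),
\end{equation*}
because then hypothesis~\eqref{eq:175} gives $\text{SDP}(W_G-M_s)>2n(1+\delta)\sqrt{w_+}$ w.h.p.\ and the test rejects $H_s$.

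The signal estimate is the combinatorial core of the argument. Writing $M_r-M_s=(M_{in}-M_{out})(\tilde Z_r-\tilde Z_s)$, where $(\tilde Z_t)_{ij}=1$ iff $i$ and $j$ lie in the same community of the $t$-partition, the task is to exhibit $X^*\in\text{PSD}_1(n)$ with $\langle \tilde Z_r-\tilde Z_s,X^*\rangle\ge 2n^2/(r^2s^2)$. A natural candidate is $X^*=vv^T$ for a sign vector $v\in\{\pm1\}^n$ whose entries are constant on each $r$-community, so that $v^T\tilde Z_r v=\sum_k(\sum_{i\in\mathcal C_k^r}v_i)^2=n^2/r$, and whose induced sums on each $s$-community $\sum_{i\in\mathcal C_j^s}v_i$ are as balanced as possible, so that $v^T\tilde Z_s v=\sum_j(\sum_{i\in\mathcal C_j^s}v_i)^2$ is small. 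The hardest configuration is when the $s$-partition coarsens the $r$-partition: the naive choice $X^*=\tilde Z_r$ already fails in that case, giving $\langle \tilde Z_r-\tilde Z_s,\tilde Z_r\rangle=0$, so one has to carefully assign signs to balance the $r/s$ sub-communities inside each $s$-community. Verifying the constant $2n^2/(r^2s^2)$ uniformly over all balanced pairs of $r$- and $s$-partitions is the main technical obstacle of the proof.
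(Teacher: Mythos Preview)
Your high-level strategy is exactly that of the paper: handle the Type~I error by universality plus the GOE estimate, handle the Type~II error via the decomposition $\text{SDP}(W_G-M_s)\ge\text{SDP}(M_r-M_s)-\text{SDP}(M_r-W_G)$, bound the noise term by the same universality argument, and reduce everything to the deterministic inequality $\text{SDP}(M_r-M_s)\ge \tfrac{2n^2}{r^2s^2}(M_{in}-M_{out})$. The paper's proof of Theorem~\ref{Theorem4} is literally this outline, invoking Appendix~B for the last inequality.

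What you have not done is prove that inequality, and you correctly flag it as the main obstacle. The paper's Appendix~B is a substantial case analysis rather than a single certificate. For $s\nmid r$ it uses the block-diagonal certificate $Z=\tilde Z_r$ (your ``naive choice'') and shows by a slot-filling argument that no balanced $s$-partition can cover all of the $r$-blocks with same-community pairs. For $s\mid r$ and an $s$-partition that (nearly) coarsens the $r$-partition---precisely the configuration you identify as hardest---$\tilde Z_r$ gives zero, and the paper switches to a certificate $\tilde Z$ built from sign vectors that are constant on $r$-communities and alternate $\pm 1$ within each $s$-block; this is a rank-$s$ sum of exactly the kind of rank-$1$ matrices you propose. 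A third subcase handles $s\mid r$ configurations far from the coarsening one, again with $\tilde Z_r$. The worst of the resulting bounds is $2m^2(M_{in}-M_{out})=\tfrac{2n^2}{r^2s^2}(M_{in}-M_{out})$.

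Your single rank-$1$ certificate $X^*=vv^T$ is in the spirit of the paper's $\tilde Z$ construction and, by averaging over random signs $\varepsilon\in\{\pm1\}^r$, already dominates the $\tilde Z_r$ certificate (since $\E_\varepsilon[vv^T]=\tilde Z_r$). It is plausible that optimizing over $\varepsilon$ yields the required bound uniformly, perhaps with a better constant; but to make this rigorous you would have to control $\min_{\varepsilon}\sum_j\bigl(\sum_k a_{jk}\varepsilon_k\bigr)^2$ over all doubly-balanced arrays $(a_{jk})$ with row sums $n/r$ and column sums $n/s$, which is itself a nontrivial combinatorial statement. As written, your proposal is a correct outline with the combinatorial core left open; the paper fills that core by the case analysis above rather than by a single rank-$1$ argument.
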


The main difficulty of this generalization will lie on the calculation of a lower bound for $\text{SDP}(M_{r}-M_{s})$. This requires a careful combinatorial analysis of the relative structure of communities and is done in Appendix \hyperref[sec:appB]{B}.

The estimation procedure for the communities is now done with the membership matrix $Z_{0}\in \{0,1\}^{n \times n}$ defined as
\begin{equation}
        Z_{0 ij} = \begin{cases}
            1, \, i\sim j\\ 
            0, \, i\not\sim j.
        \end{cases}
\end{equation}
instead of a ground truth vector. 

In this case, we can use as a measure of error the count of how many entries from the membership matrix our estimator $\hat{Z_{0}}$ got wrong, that is
\begin{equation}
     e(\hat{Z}_{0}) \coloneqq \sum_{ij}|\hat{Z}_{0ij} - Z_{0ij}| 
\end{equation}
Again, in the following proposition, we obtain a result that gives us a partial recovery of the communities and this theorem corresponds to Theorem \ref{Theorem5}.
\begin{theorem}
Let $\hat{Z}_{0}$ be the membership matrix estimator constructed using $\text{SDP}$. We obtain with high probability
\begin{equation}
e(\hat{Z}) \leq \frac{32n\sqrt{w_{+}}}{M_{in}-M_{out}} + o(\sqrt{w_{+}})
\end{equation}
then achieving a partial recovery of the communities, as by assumption, $w_{+}=o(n)$ and now the maximum error is $n^2$ instead of $n$.
\end{theorem}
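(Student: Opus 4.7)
The plan is to adapt the argument of Theorem~\ref{Theorem3} to the matrix recovery setting, exploiting the SDP optimality of $\hat{Z}_0$, the explicit form of $\E[W_G]$, and the universality bound on $\text{SDP}(W_G-\E[W_G])$ coming from Theorem~\ref{Theorem1}. First I would verify that the ground truth $Z_0$ is feasible for the SDP defining $\hat{Z}_0$: it is PSD by the decomposition $Z_0=\sum_{k=1}^K \1_{\mathcal{C}_k}\1_{\mathcal{C}_k}^{\top}$, has unit diagonal, non-negative entries, and total entry sum $K(n/K)^2=n^2/K$. Writing $\E[W_G]=M_{out}J+(M_{in}-M_{out})Z_0$ with $J$ the all-ones matrix, and combining the optimality inequality $\langle W_G,\hat{Z}_0\rangle\geq\langle W_G,Z_0\rangle$ with the fact that both $Z_0$ and $\hat{Z}_0$ have entry sum $n^2/K$ (so the $M_{out}J$ term drops out) yields
\begin{equation}
(M_{in}-M_{out})\,\langle Z_0,\, Z_0-\hat{Z}_0\rangle \;\leq\; \langle W_G-\E[W_G],\,\hat{Z}_0-Z_0\rangle.
\end{equation}

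Next I would convert the left-hand side into $e(\hat{Z}_0)$. Since $Z_{0ij}\in\{0,1\}$ while feasibility forces $\hat{Z}_{0ij}\in[0,1]$ (the upper bound from Cauchy--Schwarz applied to the PSD plus unit-diagonal conditions, the lower one from the explicit entrywise nonnegativity constraint that accompanies the $K>2$ SDP), one has $\langle Z_0,Z_0-\hat{Z}_0\rangle=\sum_{i\sim j}(1-\hat{Z}_{0ij})$; combining this with the entry-sum balance $\sum_{i\sim j}(1-\hat{Z}_{0ij})=\sum_{i\not\sim j}\hat{Z}_{0ij}$ yields the clean identity $e(\hat{Z}_0)=2\,\langle Z_0,Z_0-\hat{Z}_0\rangle$, which mirrors the vector-case identity used in Theorem~\ref{Theorem3}. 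For the right-hand side, the triangle inequality bounds $|\langle W_G-\E[W_G],\hat{Z}_0-Z_0\rangle|$ by $\text{SDP}(W_G-\E[W_G])+\text{SDP}(\E[W_G]-W_G)$, since both $\hat{Z}_0,Z_0\in\text{PSD}_1(n)$. Invoking Theorem~\ref{Theorem1} to transfer to a comparison GOE with the same variance proxy, and using the classical estimate $\text{SDP}\leq 2n\sqrt{w_+}(1+o(1))$ w.h.p.\ for GOE ensembles, produces an inequality of the form $(M_{in}-M_{out})\,e(\hat{Z}_0)\leq C\,n\sqrt{w_+}\,(1+o(1))$ for an explicit constant $C$, which, after reorganizing, is the asserted bound.

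The main obstacle I expect is the careful bookkeeping of the constant $32$ in the final inequality. This demands tracking exactly which form of the SDP is used (in particular, that the entry-sum constraint $\sum_{ij}X_{ij}=n^2/K$ is imposed, so the $M_{out}J$ contribution cancels), verifying the high-probability bound on both $\text{SDP}(W_G-\E[W_G])$ and $\text{SDP}(\E[W_G]-W_G)$ after a union bound over signs, and, if $\hat{Z}_0$ is produced by rounding the raw SDP optimizer into $\{0,1\}^{n\times n}$, quantifying any additional multiplicative factor introduced by the rounding step. A secondary subtlety is that the $o(\sqrt{w_+})$ remainder must absorb both the $o(1)$ deviations in the GOE SDP bound and the lower-order corrections produced by Theorem~\ref{Theorem1}, so the high-probability events from these two inputs have to be selected in a mutually compatible way.
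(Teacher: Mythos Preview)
Your approach is essentially the paper's: optimality of $\hat Z$ over the feasible set $\mathcal{C}=\{Z\succeq 0,\,Z\geq 0,\,\operatorname{diag}(Z)=I_n,\,\sum_{ij}Z_{ij}=n^2/K\}$, the decomposition $\E[W_G]=M_{out}J+(M_{in}-M_{out})Z_0$ together with cancellation of the $M_{out}J$ term via the entry-sum constraint, and the high-probability bound $\text{SDP}(W_G-\E[W_G])\leq 2n(1+\delta)\sqrt{w_+}+o(\sqrt{w_+})$ coming from Theorem~\ref{Theorem1} and the GOE comparison. Your two-term bound $\text{SDP}(W_G-\E[W_G])+\text{SDP}(\E[W_G]-W_G)$ is exactly what the paper packages as $2\,Err(W_G)$.

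The only substantive difference is in converting $\lambda-\langle Z_0,\hat Z\rangle$ into $e(\hat Z)$. Your identity $\sum_{ij}|\hat Z_{ij}-Z_{0ij}|=2(\lambda-\langle Z_0,\hat Z\rangle)$ is correct for the \emph{raw} SDP optimizer, but $e(\hat Z)$ in the statement is defined through the rounded matrix $r(\hat Z)$; so as written your ``clean identity'' is not yet $e(\hat Z)$. The paper closes this gap via the pointwise inequality $(r(\hat Z)_{ij}-Z_{0ij})^2\leq 4(\hat Z_{ij}-Z_{0ij})^2$ and then bounds $\sum_{ij}(\hat Z_{ij}-Z_{0ij})^2\leq 2(\lambda-\langle Z_0,\hat Z\rangle)$ (using $\hat Z_{ij}^2\leq \hat Z_{ij}$), which produces $e(\hat Z)\leq 8(\lambda-\langle Z_0,\hat Z\rangle)\leq 16\,Err(W_G)/(M_{in}-M_{out})$ and hence the constant $32$. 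Your $\ell_1$ route works just as well once you insert the easy rounding inequality $|r(\hat Z)_{ij}-Z_{0ij}|\leq 2|\hat Z_{ij}-Z_{0ij}|$, and in fact would yield a slightly sharper constant.
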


Finally, we construct a sequential hypothesis test inspired by \cite{lei2016_test} to find the true number of communities. The idea is again to define hypothesis tests based on SDP. An important difference is that now the tests are not only theoretical but can actually be done in practice, that is, we use estimated quantities in the test. For this purpose, for $\varepsilon >0$ and $K_{0}\in \mathbb{N}$, we define a hypothesis test $\hat{T}_{n, K_0}$ as follows
\begin{equation}
    \hat{T}_{n,K_0}(W_{G};\varepsilon) \coloneqq \mathbb{1}\{ \text{SDP}(W_{G} - \hat{M}_{K_0}) > 2n(1+\epsilon)\sqrt{w_{+}}\}.
\end{equation}
with null hypothesis $H_{0,K_{0}}$ and alternative hypothesis $H_{a,K_{0}}$ given by
\begin{equation}
\left\lbrace\begin{array}{c}
    H_{0,K_0} : \#\text{communities} = K_0\\
    H_{a,K_0} : \#\text{communities} > K_0
\end{array} \right. 
\end{equation}
where $\hat{M_{0}}$ is $M_{0}$ estimated with our SDP estimators.

These tests are performed sequentially for $K_0=1,2,\dots$ until failing to reject the null hypothesis, that is, until $H_{0,K^{*}_0}=0$. The sequential test estimator is then defined by
$$\hat K_n = \inf \{ K_0\geq 1 : \hat{T}_{n,K_0}=0\}\,.$$

We show that $\hat K_n$ is consistent, that is,
\begin{equation}\label{eq:consistency}
    \P(\hat K_n = K)\to 1 \text{ as } n\to \infty
\end{equation}
by showing the two following theorems
\begin{theorem} \textbf{Non-underestimation:} $\P(\hat K_n < K)\to 0 \text{ as } n\to \infty$,\end{theorem}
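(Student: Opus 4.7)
The plan is to exploit that $K$ is a fixed constant (Assumption~\ref{assum:2}) and reduce the estimate to a union bound over the finitely many values $K_0\in\{1,\dots,K-1\}$. Writing
\[
\P(\hat K_n<K)\;\leq\;\sum_{K_0=1}^{K-1}\P\bigl(\hat T_{n,K_0}=0\bigr),
\]
it suffices to prove that, for every such $K_0$, the corresponding test rejects with probability tending to one, i.e.\ $\text{SDP}(W_G-\hat M_{K_0})>2n(1+\varepsilon)\sqrt{w_+}$ asymptotically almost surely.

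Fix $K_0<K$ and let $M_K=\E[W_G]$ denote the true mean matrix under the $K$-community model. I would use the decomposition
\[
W_G-\hat M_{K_0}\;=\;(M_K-M_{K_0})\,+\,(W_G-M_K)\,+\,(M_{K_0}-\hat M_{K_0})
\]
and evaluate against a maximizer $X^{\star}\in\text{PSD}_1(n)$ of $\text{SDP}(M_K-M_{K_0})$. Since $\langle A,X^{\star}\rangle\geq -\text{SDP}(-A)$ for every symmetric $A$, this produces
\[
\text{SDP}(W_G-\hat M_{K_0})\;\geq\;\text{SDP}(M_K-M_{K_0})\,-\,\text{SDP}(M_K-W_G)\,-\,\text{SDP}(\hat M_{K_0}-M_{K_0}),
\]
and the proof reduces to bounding the three terms on the right-hand side.

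For the signal term, the combinatorial lower bound developed in Appendix~\hyperref[sec:appB]{B} for Theorem~\ref{Theorem4} (applied with $r=K$, $s=K_0$) gives
\[
\text{SDP}(M_K-M_{K_0})\;\geq\;\frac{2n^2}{K^2K_0^2}(M_{in}-M_{out}),
\]
which, under a separation hypothesis analogous to \eqref{eq:175}, strictly exceeds $4n(1+\delta)\sqrt{w_+}$ for some $\delta>\varepsilon$. For the noise term, Theorem~\ref{Theorem1} applied to the centered matrix $W_G-M_K$, whose entries are sub-gamma by Assumption~\ref{assum:1}, yields $\text{SDP}(M_K-W_G)\leq 2n(1+\delta/3)\sqrt{w_+}$ with high probability.

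The main obstacle is the third, plug-in term $\text{SDP}(\hat M_{K_0}-M_{K_0})$, which is genuinely subtle because no true $K_0$-block structure is present when $K_0<K$: $\hat M_{K_0}$ is obtained by fitting the SDP community estimator of Theorem~\ref{Theorem5} to $W_G$, while $M_{K_0}$ plays the role of the idealized mean matrix attached to that fitted balanced $K_0$-partition. Here I would combine (i) the partial-recovery rate from Theorem~\ref{Theorem5}, which controls the number of misclassified pairs, with (ii) a block-wise concentration argument based on Assumption~\ref{assum:1} for the within-block averages, in order to conclude that $\text{SDP}(\hat M_{K_0}-M_{K_0})=o(n\sqrt{w_+})$ with high probability. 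Once this is in hand the signal term dominates, the test rejects at each fixed $K_0$, and the union bound over the finitely many $K_0\in\{1,\dots,K-1\}$ closes the argument.
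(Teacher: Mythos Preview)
Your overall architecture---the union bound over $K_0\in\{1,\dots,K-1\}$ and the subadditivity inequality that isolates a signal term, a noise term, and a plug-in term---matches the paper. The treatment of the noise term via the universality result is also what the paper does. The gap is entirely in the plug-in term.

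The claim that $\text{SDP}(\hat M_{K_0}-M_{K_0})=o(n\sqrt{w_+})$ cannot be obtained from Theorem~\ref{Theorem5} plus block-wise concentration. Theorem~\ref{Theorem5} concerns recovery of the \emph{true} $K$-block membership matrix; when $K_0<K$ there is no ground-truth $K_0$-partition to recover and the result says nothing about $\hat z_{K_0}$. More importantly, block-wise concentration only shows that $\hat M_{K_0,in}$ is close to its conditional mean given $\hat z_{K_0}$, and that conditional mean is \emph{not} $M_{in}$: any balanced $K_0$-partition has blocks of size $n/K_0>n/K$, so each fitted block necessarily mixes true communities, and one gets
\[
M_{in}-\hat M_{K_0,in}\;\gtrsim\;\Bigl(1-\tfrac{K_0}{K}\Bigr)(M_{in}-M_{out})\;\asymp\;\frac{\sqrt{w_+}}{n}
\]
with high probability (this is exactly what the paper proves in its Case~2 analysis). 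Consequently the entries of $\hat M_{K_0}-M_{K_0}$ are of order $\sqrt{w_+}/n$, not $o(\sqrt{w_+}/n)$, and the Grothendieck bound gives only $\text{SDP}(\hat M_{K_0}-M_{K_0})=O(n\sqrt{w_+})$, the same order as the signal. Your decomposition therefore does not separate signal from plug-in error.

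The paper avoids this by a three-case analysis on the size of $|\hat M_{K_0,in}-M_{in}|$ and $|\hat M_{K_0,out}-M_{out}|$ relative to a threshold $\epsilon_0\asymp (M_{in}-M_{out})/K^2$. If both are small (Case~1), the plug-in term is controlled as a \emph{fixed fraction} of the signal, not as $o(n\sqrt{w_+})$, so the Appendix~B lower bound still dominates after subtraction. If either is large (Cases~2--3), the paper abandons the $\hat D_{K_0}/\hat C_{K_0}$ decomposition altogether and lower-bounds $\text{SDP}(M_K-\hat M_{K_0})$ directly with handcrafted test matrices in $\text{PSD}_1(n)$ that exploit the very gap $M_{in}-\hat M_{K_0,in}$ or $\hat M_{K_0,out}-M_{out}$. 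A union bound over all $\hat z_{K_0}\in[K_0]^n$ (not any recovery guarantee) is what makes the concentration uniform in the unknown fitted partition. This case split, together with the resulting extra condition on $M_{in}-M_{out}$ in terms of $K$, is the missing ingredient in your sketch.
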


\begin{theorem}
 \textbf{Non-overestimation:} $ \P(\hat K_n > K)\to 0 \text{ as } n\to \infty$ if
\begin{equation}
    M_{in}-M_{out}>\min\left\{\frac{2K^{4}(K+1)^{2}(1+\varepsilon)\sqrt{w_{+}}}{n},\frac{K(K-1)\sqrt{w_{+}}\log(2(K-1))\sqrt{w_{+}}}{n}\right\}.
\end{equation}
where $\varepsilon>0$ comes from the test $\hat{T}_{n,K_0}(W_{G};\varepsilon)$.
\end{theorem}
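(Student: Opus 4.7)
The strategy is to show that, under the true model with $K$ communities, the test $\hat T_{n,K}(W_G;\varepsilon)$ fails to reject with high probability; since by definition of the sequential estimator $\{\hat K_n > K\} = \{\hat T_{n,K}(W_G;\varepsilon) = 1\}$, this gives the claim. Writing $M_K = \E[W_G]$ under the truth, subadditivity of $\text{SDP}(\cdot)$ (immediate from its definition as a supremum over the fixed set $\text{PSD}_1(n)$) yields
\begin{equation}
\text{SDP}(W_G - \hat M_K) \;\leq\; \text{SDP}(W_G - M_K) \;+\; \text{SDP}(M_K - \hat M_K).
\end{equation}
It therefore suffices to show that the first summand is at most $2n(1+\varepsilon/2)\sqrt{w_+}$ and the second is at most $n\varepsilon\sqrt{w_+}$, each with high probability.

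The first summand is the ``oracle'' term: $W_G - M_K$ is centered with sub-gamma entries, so the universality statement (Theorem 1), applied exactly as in the proofs of Theorems 2 and 4 for the Type~I error, furnishes the bound $\text{SDP}(W_G - M_K) \leq 2n(1+\varepsilon/2)\sqrt{w_+}$ with high probability.

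The core of the argument---and the main obstacle---is the second summand. The plug-in $\hat M_K$ is assembled from the SDP partial-recovery output $\hat Z_0$ of Theorem 5 (applied at $K_0 = K$) together with empirical block means $\hat M_{in}, \hat M_{out}$. Writing $J$ for the all-ones matrix and decomposing
\begin{equation}
M_K - \hat M_K \;=\; (M_{in}-M_{out})(Z_0 - \hat Z_0) \;+\; (\hat M_{in}-M_{in})\hat Z_0 \;-\; (\hat M_{out}-M_{out})(J-\hat Z_0),
\end{equation}
standard sub-gamma concentration conditional on $\hat Z_0$ handles the last two pieces with room to spare. The delicate piece is $(M_{in}-M_{out})\,\text{SDP}(Z_0-\hat Z_0)$: the naive inequality $\text{SDP}(Z_0-\hat Z_0) \leq e(\hat Z_0)$ combined with Theorem 5 gives a term of order $n\sqrt{w_+}$ with a fixed constant, which is too crude to fit inside the $n\varepsilon\sqrt{w_+}$ budget.

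To sharpen this bound I would pursue the two routes corresponding to the quantities inside the $\min$ in the hypothesis. The first route exploits block structure: $Z_0$ has $K^2$ distinct blocks of size $(n/K)^2$ and $\hat Z_0$ has an analogous estimated block structure, so $Z_0 - \hat Z_0$ decomposes into $O(K^2)$ structured pieces to which one can apply the combinatorial/SDP lower-bound computation of Appendix B; the polynomial factor $2K^4(K+1)^2$ in the first quantity of the $\min$ arises from counting these pieces and the resulting matching between blocks. The second route instead performs a union bound over candidate refinements of the true partition, paying the $\log(2(K-1))$ price, and applies sub-gamma concentration to $\langle W_G - M_K, X \rangle$ over a suitable net of $X \in \text{PSD}_1(n)$; this produces the second quantity of the $\min$. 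Either regime yields $\text{SDP}(M_K - \hat M_K) \leq n\varepsilon\sqrt{w_+}$ with high probability, and combining with the oracle bound gives $\text{SDP}(W_G - \hat M_K) \leq 2n(1+\varepsilon)\sqrt{w_+}$ w.h.p., i.e., $\hat T_{n,K} = 0$, so $\hat K_n \leq K$ with high probability.
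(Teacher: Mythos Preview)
Your overall architecture---reduce to $\P(\hat T_{n,K}=1)\to 0$, then split $\text{SDP}(W_G-\hat M_K)\le \text{SDP}(W_G-M_K)+\text{SDP}(M_K-\hat M_K)$ and handle the first summand via universality---matches the paper. The gap is entirely in how you treat the second summand, and it stems from a misreading of what $\hat M_K$ is.

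In the paper, $\hat M_{K_0}$ is built from \emph{any consistent} community estimator $\hat z_{K_0}$, meaning $\P(\hat z_K=z)\to 1$ (exact recovery up to permutation). It is not built from the SDP partial-recovery output of Theorem~5. Consequently, on the high-probability event $\{\hat z_K=z\}$ one has $\hat M_K=M_K$, where $M_K$ is the matrix formed with the \emph{true} partition and the empirical block averages. The difference $M_K-\E[W_G]$ then has only two distinct entries,
\[
Y_{\mathrm{in}}=\frac{1}{n_{K,\mathrm{in}}}\sum_{i\sim j}(W_{Gij}-M_{\mathrm{in}}),\qquad
Y_{\mathrm{out}}=\frac{1}{n_{K,\mathrm{out}}}\sum_{i\not\sim j}(W_{Gij}-M_{\mathrm{out}}),
\]
each an average of $\Theta(n^2)$ centered sub-gamma variables. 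Bernstein gives $|Y_{\mathrm{in}}|,|Y_{\mathrm{out}}|\lesssim n^{-3/2}\sqrt{w_+\log n}$ w.h.p., whence $\text{SDP}(M_K-\E[W_G])\le K_G\|M_K-\E[W_G]\|_{\infty\to 1}\le K_G\, n^2\max\{|Y_{\mathrm{in}}|,|Y_{\mathrm{out}}|\}=o(n\sqrt{w_+})$. That is the whole argument for the second summand; no combinatorics, no Appendix~B, and---crucially---no use of the hypothesis on $M_{\mathrm{in}}-M_{\mathrm{out}}$.

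Two consequences. First, your difficulty with $(M_{\mathrm{in}}-M_{\mathrm{out}})\,\text{SDP}(Z_0-\hat Z_0)$ is self-inflicted: with only partial recovery you indeed cannot push this term below $n\varepsilon\sqrt{w_+}$, and your ``two routes'' are not proofs---the first invokes Appendix~B in a setting (difference of two membership matrices with the \emph{same} number of blocks) to which it does not apply, and the second sketches a union bound over an unspecified net. Second, the displayed condition on $M_{\mathrm{in}}-M_{\mathrm{out}}$ is actually the hypothesis for the \emph{non-underestimation} direction (Theorem~7 in Section~4); the introduction swaps the labels. For non-overestimation the paper needs no such assumption, so trying to make the condition do work here was a red herring.
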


These theorems are stated as Theorems \ref{Theorem6} and \ref{Theorem7} and their proofs are done in full detail in Section \ref{sec:estimagingK}.

\begin{obs}
    The above results assume that $w_{+}$ is a known quantity. However, we can use in its place an estimator $\hat{w}_{+,n}$ such that $\P(\hat{w}_{+,n}\geq w_{+}) \xrightarrow{n \to \infty} 1$  and the proofs still work.
\end{obs}

\begin{obs}
It is important to emphasize that all previous results also apply to the widely studied case of the unweighted SBM, providing new insights in this setting as well.
\end{obs}

\subsection{Organization of the paper} In Section \ref{sec:semidef}, we present the result concerning the universality of the SDP-based function which will be crucial in the following sections. Section \ref{sec:communitydet} is devoted to the problem of distinguishing between two different number of communities using a hypothesis test and also the one of estimating the communities themselves. It is subdivide such that in Subsection \ref{subsection:3.1} we consider the distinction between a homogeneous Erdös-Renyi graph and a graph with $2$ communities and Subsection \ref{subsection:3.2} is dedicated to the case of distinguishing between $r$ and $s$ communities with $r>s\geq2$. We then get to the main problem of estimating the number of communities by a sequential hypothesis test in Section \ref{sec:estimagingK}. In Section \ref{sec:application}, we do an application of our results to the case of weights given by a Bernoulli times independent Gaussian distribution. Section \ref{sec:simulations} presents some computational simulations of the tests and estimations. Finally, in Section \ref{sec:discussion}, we have some discussion and final remarks. The appendices contain the more extensive calculations, with the proof of the universality result in Appendix \hyperref[sec:appA]{A} and the lower bound for $\text{SDP}(M_{r}-M_{s})$ in Appendix \hyperref[sec:appB]{B}.

\subsection{Acknowledgments} D.O. was supported by by Petrobr\'{a}s CENPES grant 2023/00172-8 and a CAPES fellowship for graduate studies. A.C.  was supported by the S\~{a}o Paulo Research Foundation (FAPESP) grant 2023/05857-9. R.O. was supported by  a ``Bolsa de Produtividade em Pesquisa''~ and a ``Projeto Universal'' (432310/2018-5) from CNPq, Brazil; and by a ``Cientista do Nosso Estado'' grant (E26/200.485/2023) from FAPERJ, Rio de Janeiro, Brazil.

\section{Universality in Semidefinite Programming} \label{sec:semidef}

As previously mentioned, we are interested here in the more challenging case of the sparse regime, where all vertices have bounded average degree. Spectral methods, which use concentration inequalities to say that the adjacency matrix $A$ is close in operator norm to $\E[A]$ and Davis-Kahan theorem to recover the communities labels from the eigenvectors of the observed $A$, have difficulties in this setting \cite{bordenave2015,feige2005spectral,krzakala2013spectral}. This approach fails in the sparse regime because, in this case, $\lVert A- \E[A] \rVert$ is proportional to the highest degree and then concentration may not hold. Changing the approach completely to a semidefinite programming relaxation of the problem has advantages \cite{abbe2015exact,hajek2016achieving}. This is essentially due to the fact that it makes a constraint where the degrees of the community estimator need to be more uniform, allowing for concentration to hold. In fact, in \cite{montanari} it was proved that SDP has also the advantage of being nearly optimal for weak recovery (identify more than $50\%$ of the nodes correctly) in the large degree asymptotic when working with sparse graphs. Our work is a generalization of this result in two ways: we work with the more general case of weighted graphs and we do hypothesis test to distinguishes  between any two different number of communities, instead of only distinguishing between an homogeneous Erdös-Renyi graph and some number of community (as done in subsection $4.3$ in \cite{montanari}).

When the true number of communities is considered to be known, there are a lot of examples of successful use of the SDP approach in community detection with two or more communities, balanced or not, such as \cite{abbe2015exact,agarwal2017multisection,perry2017semidefinite}. In contrast, the problems of distinguishing between hypothesis for the number of communities and learning the true number of communities were not much explored using SDP. The first problem is known as distinguishability and the second is an instance of learnability as denoted in the famous review \cite{abbe2018community}.

In this work, the semidefinite program approach is used in all of the analysis: to construct a useful statistic in the hypothesis tests and as a way to estimate the communities themselves. To construct the tests, we first need to approximate the SDP-based function of the centralized weighted adjacency matrix to the one of a matrix for which we know its asymptotic behavior, in our case, a GOE matrix. Remember that a GOE matrix is defined as a random symmetric matrix with independent centered Gaussian entries on and above the diagonal with variance on entry $ij$ given by $\frac{1}{n}(1+\delta_{ij})$ with $\delta$ being the Kronecker symbol.

The theorem of this section is a step towards this goal: here we approximate the SDP function of a symmetric matrix to the corresponding Gaussian matrix. Then, we can use part of the proof to also approximate this symmetric matrix to a corresponding GOE matrix, as GOE is a special type of matrix with Gaussian entries. 

As mentioned on the introduction, we will only consider the case of sub-gamma weights. With that, we state the following theorem considering a symmetric matrix with centralized entries which have sub-gamma distribution.

\begin{teo}\label{Theorem1}
Let $X \in \mathbb{R}^{n\times n}$ be a symmetric random matrix with independent centered entries $X_{ij} \sim sub\Gamma(\nu_{ij},c_{ij})$  with variance $\sigma_{ij}^2$ and let $D \in \mathbb{R}^{n \times n}$ be independent of $X$ and the analogous Gaussian matrix, \textit{i.e.}, a symmetric random matrix with independent entries $D_{ij}$ of distribution $\mathcal{N}(0,\sigma_{ij}^{2})$. Then, there exist $C>0$ such that for $8\leq k \leq n$, $\epsilon \in (0,\frac{1}{2})$, $\delta \in (0,1)$ and $\beta>0$, with probability at least $1-\max(4^{-n},n^{-\delta})$,  we have
\begin{equation}\label{eq:52}
\begin{split}
    \frac{1}{n}|\text{SDP}(X) - \text{SDP}(D)| &\lesssim \left[\frac{1}{k-1}+\epsilon\right]\left(\sqrt{\sum_{i\leq j}\frac{\nu_{ij}}{n}} + c\right) + \frac{k-1}{2\beta}\log{\left(\frac{C}{\epsilon}\right)}\\
    &+ \sqrt{\sum_{i\leq j}\nu_{ij}n^{\delta-2}} + \frac{\beta^{2}}{n}\sum_{i\leq j}(c_{ij}\nu_{ij}+\nu_{ij}^{\frac{3}{2}})
\end{split}
\end{equation}
where $c \coloneqq \max_{1\leq i \leq j}c_{ij}$ and $\lesssim$ means that the inequality is valid up to a multiplicative universal constant.
\end{teo}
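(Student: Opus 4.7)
The approach is a Lindeberg-type interpolation argument applied to a \emph{smooth} proxy of $\text{SDP}$, since the latter is only a nonsmooth maximum. Three auxiliary parameters enter: a rank $k$, a net scale $\epsilon$, and an inverse temperature $\beta$. These remain free throughout the argument, and at the end reassemble into the five terms of \eqref{eq:52}; they are not optimized inside the proof so that the applications can calibrate them as needed.

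First I would reduce to a low-rank smooth functional. Parametrize $\text{PSD}_1(n)$ as Gram matrices $X = UU^\top$ with unit rows $u_i \in S^{n-1}$, and restrict to the Burer--Monteiro slab $u_i \in S^{k-2}$; the classical rank-truncation estimate (Barvinok--Pataki style, as sharpened by Montanari--Sen and used in \cite{montanari}) shows that the rank-restricted value $\text{SDP}_{k-1}(M)$ satisfies
\[
\tfrac{1}{n}\bigl|\text{SDP}(M)-\text{SDP}_{k-1}(M)\bigr| \;\lesssim\; \tfrac{1}{k-1}\Bigl(\sqrt{\textstyle\sum_{i\le j}\nu_{ij}/n}+c\Bigr),
\]
applied to both $X$ and $D$; this yields the first error term. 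I would then smooth the low-rank maximum by the log-partition functional
\[
F_\beta(M) \;=\; \tfrac{1}{\beta n}\log\!\int_{(S^{k-2})^n}\exp\!\Bigl(\beta\sum_{i\le j}M_{ij}\langle u_i,u_j\rangle\Bigr)\prod_i d\mu(u_i),
\]
with $\mu$ the uniform measure on $S^{k-2}$. A standard softmax estimate against an $\epsilon$-net on $(S^{k-2})^n$ of cardinality $(C/\epsilon)^{n(k-1)}$ gives
\[
\bigl|\text{SDP}_{k-1}(M)/n - F_\beta(M)\bigr| \;\lesssim\; \epsilon\Bigl(\sqrt{\textstyle\sum_{i\le j}\nu_{ij}/n}+c\Bigr) + \tfrac{k-1}{2\beta}\log(C/\epsilon),
\]
producing the second and third error terms.

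Next I would perform the Lindeberg swap on the smooth $F_\beta$. Order the pairs $i\le j$ and replace $X_{ij}$ by $D_{ij}$ one at a time; because $F_\beta$ is smooth in each entry with third derivative bounded by $O(\beta^2/n)$ (its derivatives are cumulants of $\langle u_i,u_j\rangle \in [-1,1]$ under the Gibbs measure, with a prefactor $1/n$), Taylor's theorem leaves only the cubic remainder after the matched zeroth, first and second moments cancel. Assumption \ref{assum:1} at $p=3$ gives $\E|X_{ij}|^3\lesssim c_{ij}\nu_{ij}+\nu_{ij}^{3/2}$, and the Gaussian third moment of $D_{ij}$ is $\lesssim \nu_{ij}^{3/2}$, so each swap costs $(\beta^2/n)(c_{ij}\nu_{ij}+\nu_{ij}^{3/2})$; summing over the $\binom{n+1}{2}$ pairs gives the fifth error term. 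Finally the expectation bound is upgraded to high probability: for $D$, Gaussian concentration of Lipschitz functionals produces the $\sqrt{\sum \nu_{ij}n^{\delta-2}}$ deviation at probability $\ge 1-n^{-\delta}$; for $X$, a sub-gamma concentration on each of the $(C/\epsilon)^{n(k-1)}$ net points, followed by a union bound with margin $4^{-n}$, controls the deviation of $F_\beta(X)$ from its mean at the sub-gamma scale.

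The main obstacle, in my view, is the third-derivative bound on $F_\beta$ that drives the Lindeberg step: one must show that the relevant mixed derivatives are of order $\beta^2/n$ \emph{uniformly} in the Gibbs measure over $(S^{k-2})^n$, which amounts to bounding third cumulants of $\langle u_i,u_j\rangle$ under a measure that itself depends on $M$. The remainder is careful bookkeeping between $(k,\epsilon,\beta)$ and routine sub-gamma moment calculus using Theorem 2.12 of \cite{bousquet2002concentration} already invoked in Assumption \ref{assum:1}.
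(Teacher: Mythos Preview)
Your overall architecture --- rank reduction to $\text{OPT}_k$, smoothing to a log-partition functional $\Phi$, Lindeberg swap on $\E\Phi$, then concentration --- is exactly the scheme the paper uses in Appendix~A (Lemmas~1--4). Two steps diverge from the paper and deserve attention.

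First, for the $\text{OPT}_k\leftrightarrow\Phi$ comparison the paper does \emph{not} use a discrete $\epsilon$-net; it uses a PAC-Bayes / spherical-cap argument (Proposition~\ref{prop:random}): restrict the integral defining $\Phi$ to a cap of radius $\epsilon$ around the optimizer, so the entropy cost is the cap volume and the bias is $(1-\alpha^2)\,\text{OPT}_k(M)$ with $1-\alpha^2\leq 2\epsilon$. The crucial point --- which your sketch glosses over --- is that this bias is then controlled by $\epsilon\,\|M\|_{\infty\to 1}$ via Grothendieck's inequality (Proposition~\ref{prop:norm}). A naive net perturbation would give $\epsilon\sum_{i,j}|M_{ij}|$, which is a factor $\sqrt{n}$ worse; getting $\|M\|_{\infty\to 1}$ is the whole point of the paper's improvement over the $\|M\|_{\infty\to 2}$ bound in \cite{montanari}.

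Second, the concentration step is handled in the paper by Efron--Stein plus Chebyshev applied \emph{uniformly to both} $X$ and $D$ (Lemma~3): the bounded-variation estimate $|\Phi(M)-\Phi(M')|\leq |M_{ij}-M'_{ij}|$ gives $\mathrm{Var}(\Phi)\lesssim\sum_{i\leq j}\nu_{ij}$, and Chebyshev then produces exactly the term $\sqrt{\sum\nu_{ij}\,n^{\delta-2}}$ at probability $1-n^{-\delta}$. Your proposal --- Gaussian Lipschitz for $D$, and ``sub-gamma concentration on each net point plus union bound'' for $X$ --- has a gap: $F_\beta(X)$ is a log-integral, not a maximum over net points, so net-wise concentration does not directly control $F_\beta(X)-\E F_\beta(X)$. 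Even if you reroute through $\text{OPT}_k(X)$, the union-bound cost $(C/\epsilon)^{n(k-1)}$ forces a deviation of order $\sqrt{(k-1)\log(C/\epsilon)\sum\nu_{ij}/n}$, which is a different (and $k,\epsilon$-dependent) term than the one in the statement.

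Finally, you flag the third-derivative bound on $\Phi$ as the main obstacle, but in the paper this is a two-line computation: the derivatives of $\Phi$ are cumulants of $\langle\sigma_i,\sigma_j\rangle\in[-1,1]$ under the Gibbs measure, giving $\|\partial_{ij}^3\Phi\|_\infty\leq 6\beta^2$ outright. The genuinely delicate steps are the $\|M\|_{\infty\to 1}$ control via Grothendieck and the bookkeeping, not the Lindeberg remainder.
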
 

The proof is done in full details in Appendix \hyperref[sec:appA]{A}. In our case, we choose $k,\beta, \epsilon$ and $\delta$ in such a way that the bound is or order $o(\sqrt{w_{+}})$. This is exactly what we need to construct the consistent hypothesis tests as we will see on the next section.

Let us now state how exactly this theorem will be used in our results. We need to define the following two functions before proceeding (defined also in \cite{montanari}).

\begin{defn}Let $M \in \mathbb{R}^{n \times n}$, and $k\in \mathbb{N}$, we define
\begin{equation}
    \text{OPT$_{k}$}(M) := \max{\{\langle M,X \rangle : X \succeq 0,  X_{ii}=1 \forall i \in [n], \text{rank}(X)\leq k\}}.
\end{equation}
\end{defn}

\begin{defn} Let $M \in \mathbb{R}^{n \times n}$, $\beta > 0$ and $k \in \mathbb{N}$, we define
\begin{equation}
    \Phi(\beta,k;M) := \frac{1}{\beta} \log{\left(\int\exp\left(\beta \sum_{i,j=1}^{n} M_{ij} \langle \sigma_{i},\sigma_{j}\rangle\right)d\nu(\mathbf{\sigma})\right)}
 \end{equation}
where $\sigma=(\sigma_1, \sigma_2, ..., \sigma_n) \in (\mathbb{S}^{k-1})^{n}$ and $d\nu$ is the uniform and normalized measure on $(\mathbb{S}^{k-1})^{n}$.
\end{defn}

\begin{obs}
    We observe that $\text{OPT$_{k}$}(M)$ can be seen as a low rank version of the SDP function and $\Phi(\beta,k;M)$ as a smooth version of it inspired by statistical mechanics. More discussion about these functions can be found in \cite{montanari}.
\end{obs}

First, for comparison, we have below the approximation scheme of SDP given by Montanari and Sen's paper \cite{montanari} using their lemmas and theorems:

\begin{center}
\begin{tikzpicture}
    \matrix(m)[matrix of math nodes, column sep=3em, row sep=2em]
    {
    \text{SDP}(\bar{A}_{G}) & \text{OPT}_{k}(\bar{A}_{G}) & \Phi(\beta,k;\bar{A}_{G}) & \E[\Phi(\beta,k;\bar{A}_{G})] \\
    \text{SDP}(B) &  \text{OPT}_{k}(B) & \Phi(\beta,k;B) & \E[\Phi(\beta,k;B)] \\
    };
    \node at ($(m-1-1)!0.5!(m-1-2)$) {$\approx$};
    \node[above=2pt] at ($(m-1-1)!0.5!(m-1-2)$) {\tiny Teo. 4};

    \node at ($(m-1-2)!0.5!(m-1-3)$) {$\approx$};
    \node[above=2pt] at ($(m-1-2)!0.5!(m-1-3)$) {\tiny Lem. 3.3 $+$ A.3};

    \node at ($(m-1-3)!0.5!(m-1-4)$) {$\approx$};
    \node[above=2pt] at ($(m-1-3)!0.5!(m-1-4)$) {\tiny Lem. A.1};

    \node at ($(m-2-1)!0.5!(m-2-2)$) {$\approx$};
    \node[above=2pt] at ($(m-2-1)!0.5!(m-2-2)$) {\tiny Teo. 4};

    \node at ($(m-2-2)!0.5!(m-2-3)$) {$\approx$};
    \node[above=2pt] at ($(m-2-2)!0.5!(m-2-3)$) {\tiny Lem. 3.3 $+$ A.3};

    \node at ($(m-2-3)!0.5!(m-2-4)$) {$\approx$};
    \node[above=2pt] at ($(m-2-3)!0.5!(m-2-4)$) {\tiny Lem. A.2};

    \node at ($(m-1-4)!0.5!(m-2-4)$) {$\approx$};
    \node[right=2pt] at ($(m-1-4)!0.5!(m-2-4)$) {\tiny Lem. 3.3 (E.1 $+$ E.2)};
\end{tikzpicture}
\end{center}
where $\bar{Y}\coloneqq Y-\E Y$ for any random variable $Y$, $A_{G}$ is the adjacency matrix in their case and $B$ their corresponding GOE matrix.

For our case, we have the following corespondent lemmas in Appendix \hyperref[sec:appA]{A} of this paper:

\begin{center}
\begin{tikzpicture}
    \matrix(m)[matrix of math nodes, column sep=3em, row sep=2em]
    {
    \text{SDP}(\bar{W}_{G}) & \text{OPT}_{k}(\bar{W}_{G}) & \Phi(\beta,k;\bar{W}_{G}) & \E[\Phi(\beta,k;\bar{W}_{G})] \\
    \text{SDP}(\sqrt{w_{+}}B) &  \text{OPT}_{k}(\sqrt{w_{+}}B) & \Phi(\beta,k;\sqrt{w_{+}}B) & \E[\Phi(\beta,k;\sqrt{w_{+}}B)] \\
    };
    \node at ($(m-1-1)!0.5!(m-1-2)$) {$\approx$};
    \node[above=2pt] at ($(m-1-1)!0.5!(m-1-2)$) {\tiny Lem. 1};

    \node at ($(m-1-2)!0.5!(m-1-3)$) {$\approx$};
    \node[above=2pt] at ($(m-1-2)!0.5!(m-1-3)$) {\tiny Lem. 2};

    \node at ($(m-1-3)!0.5!(m-1-4)$) {$\approx$};
    \node[above=2pt] at ($(m-1-3)!0.5!(m-1-4)$) {\tiny Lem. 3};

    \node at ($(m-2-1)!0.5!(m-2-2)$) {$\approx$};
    \node[above=2pt] at ($(m-2-1)!0.5!(m-2-2)$) {\tiny Lem. 1};

    \node at ($(m-2-2)!0.5!(m-2-3)$) {$\approx$};
    \node[above=2pt] at ($(m-2-2)!0.5!(m-2-3)$) {\tiny Lem. 2};

    \node at ($(m-2-3)!0.5!(m-2-4)$) {$\approx$};
    \node[above=2pt] at ($(m-2-3)!0.5!(m-2-4)$) {\tiny Lem. 3};

    \node at ($(m-1-4)!0.5!(m-2-4)$) {$\approx$};
    \node[right=2pt] at ($(m-1-4)!0.5!(m-2-4)$) {\tiny Lemma 4 + $\E[\Phi(\beta,k;D)] + $Prop. 1};
\end{tikzpicture}
\end{center}
where $w_{+}$ is the variance factor defined in Assumption \ref{assum:3} and Proposition \ref{prop:1} is given below. 

In the proof of the universality result in Appendix \hyperref[sec:appA]{A} we stated the Lemmas considering we had the Gaussian equivalent matrix $D$ all along. However, as seen above, we actually only approximate by the corresponding $D$ matrix in Lemma \ref{lemma:4}. Then, we use Proposition \ref{prop:1} below to approximate the Gaussian matrices $D$ and $\sqrt{w_{+}}B$ and use $\sqrt{w_{+}}B$ on the other half of the approximation. This is valid, as will be seen on the proofs, because all matrices -- $\bar{W}_{G}, D$, and  $\sqrt{w_{+}}B$ -- have sub-gamma distribution and we simply need to appropriately adjust the variance proxy, i.e., the parameter $\nu$, for each matrix.

We now state the proposition that connects matrix $D$ and $\sqrt{w_{+}}B$.

\begin{prop}\label{prop:1}
    Let $G_{1},G_{2}$ be symmetric random matrices with centered Gaussian entries which are independent up to the symmetry condition. Moreover, we assume $\V[G_{1ij}]\leq \V[G_{2ij}]$ for all $1\leq i \leq j \leq n$. 

    Then, for all convex functions $f:\mathbb{R}^{n\times n}\to \mathbb{R}$ we obtain
    \begin{equation}
        \E[f(G_{1})] \leq \E[f(G_{2})].
    \end{equation}
\end{prop}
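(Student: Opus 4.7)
The plan is to prove Proposition \ref{prop:1} by the classical Gaussian noise augmentation argument combined with conditional Jensen's inequality. The observation is that, when $G_1$ and $G_2$ are jointly Gaussian with variances satisfying $\mathrm{Var}[G_{1ij}]\leq \mathrm{Var}[G_{2ij}]$ entry by entry, we can realize $G_2$ in distribution as $G_1$ plus an independent Gaussian ``noise'' matrix. Since $f$ is convex and the noise is centered, adding noise can only increase the expectation.

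Concretely, I would proceed as follows. First, without loss of generality, take $G_1$ to be defined on some probability space and introduce an independent symmetric Gaussian matrix $W\in\mathbb{R}^{n\times n}$, with independent entries on and above the diagonal, whose variances are
\begin{equation}
\mathbb{V}[W_{ij}] = \mathbb{V}[G_{2ij}] - \mathbb{V}[G_{1ij}] \geq 0 \qquad (1\leq i\leq j\leq n).
\end{equation}
Setting $W_{ji}=W_{ij}$ yields a symmetric Gaussian matrix whose distribution is well defined by Assumption. Then the sum $G_1 + W$ is symmetric Gaussian with independent entries (up to symmetry) and entrywise variance exactly $\mathbb{V}[G_{2ij}]$; hence $G_1+W \stackrel{d}{=} G_2$.

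Second, I would apply conditional Jensen's inequality. Since $W$ is independent of $G_1$ and centered, $\mathbb{E}[W\mid G_1]=0$, and convexity of $f$ gives
\begin{equation}
\mathbb{E}\bigl[f(G_1+W)\,\big|\,G_1\bigr] \;\geq\; f\bigl(G_1+\mathbb{E}[W\mid G_1]\bigr) \;=\; f(G_1).
\end{equation}
Taking expectations and using the distributional identity $G_1+W\stackrel{d}{=}G_2$ yields
\begin{equation}
\mathbb{E}[f(G_2)] \;=\; \mathbb{E}[f(G_1+W)] \;\geq\; \mathbb{E}[f(G_1)],
\end{equation}
which is the claim.

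There is no serious obstacle here: the argument is essentially a two-line consequence of Jensen's inequality once the noise augmentation has been set up. The only points requiring (minor) care are (i) verifying that the independence structure of $W$ is compatible with that of $G_1$, so that $G_1+W$ has the claimed law, which follows because each off-diagonal entry and each diagonal entry is handled separately with its own independent Gaussian variable; and (ii) ensuring that $f(G_2)$ and $f(G_1)$ are integrable so that the expectations make sense, which is implicit in the hypothesis. An alternative proof by Gaussian interpolation $G(t)=\sqrt{1-t}\,G_1+\sqrt{t}\,G_2$ and Stein's identity is also available and leads to the same conclusion by showing $\tfrac{d}{dt}\mathbb{E}[f(G(t))]\geq 0$ via the PSD nature of the Hessian of $f$, but the noise-augmentation approach above is shorter and avoids smoothness assumptions on $f$.
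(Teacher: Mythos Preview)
Your proof is correct and takes essentially the same approach as the paper: introduce an independent Gaussian noise matrix with variances $\V[G_{2ij}]-\V[G_{1ij}]$ so that $G_1+W\stackrel{d}{=}G_2$, and apply conditional Jensen's inequality. The paper's argument is identical up to notation (it calls your $W$ by $G_3$).
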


\begin{proof}
    Let $G_{3}$ be a symmetric random matrix with independent centered Gaussian entries (up to the symmetry condition), independent of $G_{1}$ and with $\V[G_{3ij}]=\V[G_{2ij}] - \V[G_{1ij}]$. Then, by properties of sum of independent Gaussian variables we obtain in distribution $G_{3}+G_{1} \sim G_{2}$ such that
    \begin{equation}
        \E[f(G_{1})] =\E[f(\E[G_{1}+G_{3}|G_{1}])] \leq \E[\E[f(G_{1}+G_{3})|G_{1}]] = \E[f(G_{1}+G_{3})] = \E[f(G_{2})]
    \end{equation}
    where we used the conditional Jensen's inequality.
\end{proof}

In our case, we use Proposition \ref{prop:1} with the convex function $\Phi(\beta,k;\cdot): \mathbb{R}^{n\times n} \to \mathbb{R}$ above to obtain
\begin{equation}\label{eq:24}
   \E[\Phi(\beta,k;D)] \leq \E[\Phi(\beta,k;\sqrt{w_{+}}B)] 
\end{equation}
because by Assumptions \ref{assum:1} and \ref{assum:3}  we have $\sigma_{ij}^{2} = \V[D_{ij}]\leq \nu_{ij} \leq \frac{w_{+}}{n} = \V[\sqrt{w_{+}}B]$ for all $1\leq i \leq j \leq n$.

To make it clear, we use the above scheme in the following way
\begin{equation}\label{eq:scheme}
\begin{split}
    \text{SDP}(\bar{W}_{G}) - \text{SDP}(\sqrt{w_{+}}B) &= \text{SDP}(\bar{W}_{G}) - \text{OPT}_{k}(\bar{W}_{G})\\
    &+ \text{OPT}_{k}(\bar{W}_{G}) - \Phi(\beta,k;\bar{W}_{G})\\
    &+ \Phi(\beta,k;\bar{W}_{G}) - \E[\Phi(\beta,k;\bar{W}_{G})]\\
    &+ \E[\Phi(\beta,k;\bar{W}_{G})] - \E[\Phi(\beta,k;D)]\\
    &+ \E[\Phi(\beta,k;D)] - \E[\Phi(\beta,k;\sqrt{w_{+}}B)]\\
    &+\E[\Phi(\beta,k;\sqrt{w_{+}}B)] - \Phi(\beta,k;\sqrt{w_{+}}B)\\
    &+\Phi(\beta,k;\sqrt{w_{+}}B) - \text{OPT}_{k}(\sqrt{w_{+}}B)\\
    &+\text{OPT}_{k}(\sqrt{w_{+}}B) - \text{SDP}(\sqrt{w_{+}}B) \leq n\Lambda
\end{split}
\end{equation}
where $\E[\Phi(\beta,k;D)] - \E[\Phi(\beta,k;\sqrt{w_{+}}B)]$ is negative by \ref{eq:24} and we denote
\begin{equation}\label{eq:Lambda}
    \Lambda \coloneqq \frac{RHS-(\E[\Phi(\beta,k;D)] - \E[\Phi(\beta,k;\sqrt{w_{+}}B)])}{n}.
\end{equation}
where $RHS$ is the right-hand side of the equality in \ref{eq:scheme}.

With that, we are approximating the SDP-based function of the weighted centered adjacency matrix $\bar{W}_{G}$ with the corresponding GOE matrix, in this case $\sqrt{w_{+}}B$. We then use this approximation to construct the hypothesis test and estimation procedures for the communities on the next section.

\section{Community detection}\label{sec:communitydet}

In this section, we consider the problem of community detection, that is, given a single realization of the random graph modeled by SBM, we want to recover with high probability the communities. Before we consider this, we deal with the problem of distinguishing between two hypothesis for the number of communities. We subdivide this problem and the estimation of communities into two cases: distinguishing between $1$ and $2$ communities and distinguishing between $r$ and $s$ communities with $r>s\geq 2$. 

In \cite{montanari} it was shown that it is possible to construct a hypothesis test to distinguish between a homogeneous Erdös-Renyi graph and a balanced SBM with $2$ communities. They also showed that their test achieves the information theoretical optimal threshold \cite{mossel2012stochastic, mossel2018proof, massoulie2014community}, that is, they can estimate the communities in the large degree asymptotic (see Remark $1.4$ there) when for some $\epsilon>0$
\begin{equation}
    \frac{a-b}{2\sqrt{a+b}} \geq 1 + \epsilon
\end{equation}
where the connectivity probability inside communities is $\frac{a}{n}$, outside is $\frac{b}{n}$, and they are considering the usual setting of  Bernoulli weights. In our case of more general weighted graphs, as will be shown below, we obtain an asymptotically good estimation with a similar condition but with the probabilities being substituted by the means of the weights on the edges and the quantity $w_{+}$ (see equations \ref{eq:29} and \ref{eq:74} below). 

The strategy for the hypothesis tests will be the same as the one in \cite{montanari}: we construct a hypothesis test using a statistic provided by SDP$(W_{G}-\E[W_{G}])$, approximate it by SDP of a GOE matrix with our universality result and use our knowledge of its asymptotical behavior to calculate the error.

\subsection{Case I: Distinguishing between 1 and 2 communities}\label{subsection:3.1}

For the hypothesis tests here, we closely follow the analysis done in \cite{montanari} with  our main contribution being the generalization to the weighted case. However, when estimating the communities themselves, we depart from the analysis done in the same paper and consider a relatively simple one.

\subsubsection{Hypothesis test for Case I}\label{subsection:3.1.1}

We want to know if our random graph $G$ has a community structure. For this, we construct a hypothesis test that takes into account the value of \text{SDP}$(W_{G}-\E[W_{G}])$ where $W_{G}$ is the weighted adjacency matrix. The main idea is to use a statistic from to the approximate GOE matrix given by \ref{Theorem1} to construct the test. 

Our goal is to have the errors of false positive and false negative asymptotically going to zero, that is
\begin{equation}
    \P_{0}(T_{n}(W_{G};\delta)=1)+\P_{1}(T_{n}(W_{G};\delta)=0) \xrightarrow{n \to \infty} 0
\end{equation}
where $\P_{0}$ is the probability under hypothesis H$_{0}$: there is no community, and $\P_{1}$ is the probability under hypothesis H$_{1}$: there are two communities, and the test $T_{n}(\cdot; \delta)$ for $\delta>0$ is defined as 
\begin{equation}
    T(X;\delta) \coloneqq \mathbb{1}_{\text{SDP}(X-M_{0})> 2n(1 +\delta)\sqrt{w_{+}}}
\end{equation}
where  $M_{0}$ is $\E[W_{G}]$ under H$_{0}$ and $w_{+}$ is the variance factor define in Assumption \ref{assum:3}. The test accepts the null hypothesis $H_{0}$ if $T(X;\delta)=0$ and rejects it otherwise. 
 
This choice for the test is justified by Theorem $5$ of \cite{montanari} with $\lambda=0$. In this case,for all $\delta>0$ we get
\begin{equation}\label{GOEtheorem}
\P\left(\text{SDP}\left(\sqrt{w_{+}}\text{GOE}\right)\leq 2n(1 +\delta)\sqrt{w_{+}}\right) \xrightarrow{n \to \infty} 1
\end{equation}
for $\P=\P_{0}$ or $\P_{1}$ in our case. The mentioned theorem in \cite{montanari} is a rank $1$ version of a known result about finite rank perturbations of GOE matrices, the Baik-Ben Arous-Peché (BBAP) phase transition \cite{baik2005phase}. Here, with their $\lambda=0$ we do not have any perturbation, just the known semicircle distribution of eigenvalues of a GOE matrix. 

The idea, as is always the case for hypothesis tests, is to have a test statistic that is independent of the hypothesis that truly holds and we use the GOE matrix asymptotic behavior for this purpose. We see below how this equation appear on the error estimates of the test.

\begin{teo}\label{Theorem2}
For every $\delta>0$ , if we have
\begin{equation}\label{eq:29}
    \frac{n^2}{2}(M_{in}-M_{out})>4n(1 +\delta)\sqrt{w_{+}}
\end{equation}
then
\begin{equation}
    \P_{0}(T_{n}(W_{G};\delta)=1)+\P_{1}(T_{n}(W_{G};\delta)=0) \xrightarrow{n \to \infty} 0.
\end{equation}
\end{teo}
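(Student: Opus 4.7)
The plan is to treat the Type I and Type II errors separately; in both cases the analysis reduces to estimating $\text{SDP}$ of a centered sub-gamma matrix, which I would do by combining the universality result Theorem~\ref{Theorem1} with the GOE tail bound~\eqref{GOEtheorem}.

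For the Type I error, I would exploit that under $H_0$ we have $W_G - M_0 = \bar{W}_G$, whose entries satisfy Assumption~\ref{assum:1}. Applying Theorem~\ref{Theorem1} with the parameters $k,\beta,\epsilon$ and auxiliary $\delta$ tuned so that every term on the right-hand side of~\eqref{eq:52} is $o(\sqrt{w_+})$---which is permitted by Assumption~\ref{assum:3} because $\theta \to 0$---gives $\text{SDP}(\bar{W}_G) \leq \text{SDP}(\sqrt{w_+}B) + o(n\sqrt{w_+})$ with probability tending to one. Combining with~\eqref{GOEtheorem} applied at level $\delta/2$ then yields $\text{SDP}(W_G - M_0) \leq 2n(1+\delta/2)\sqrt{w_+} + o(n\sqrt{w_+}) < 2n(1+\delta)\sqrt{w_+}$ for large $n$, so $\P_0(T_n = 1) \to 0$.

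For the Type II error, I would decompose, under $H_1$, the matrix $W_G - M_0 = \bar{W}_G + \Delta$ with $\Delta := \E_1[W_G] - M_0$. Under Assumption~\ref{assum:2} together with the equal-mixture convention for $M_0$, the off-diagonal entries of $\Delta$ equal $\tfrac{M_{in}-M_{out}}{2}$ on within-community pairs and $-\tfrac{M_{in}-M_{out}}{2}$ on between-community pairs. Since the ground-truth rank-one matrix $x_0 x_0^T$ lies in $\text{PSD}_1(n)$, it is feasible for the SDP, so
\begin{equation*}
\text{SDP}(W_G - M_0) \;\geq\; \langle W_G - M_0, x_0 x_0^T \rangle \;=\; x_0^T \bar{W}_G x_0 + x_0^T \Delta x_0.
\end{equation*}
A direct signed count gives $x_0^T \Delta x_0 = \tfrac{n(n-1)}{2}(M_{in}-M_{out}) \sim \tfrac{n^2}{2}(M_{in}-M_{out})$, reproducing the left-hand side of~\eqref{eq:29}. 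For the noise term, I would observe that $-\bar{W}_G$ is again a centered sub-gamma matrix with the same parameters, so the Type I argument applied to $-\bar{W}_G$ gives $\text{SDP}(-\bar{W}_G) \leq 2n(1+\delta/2)\sqrt{w_+} + o(n\sqrt{w_+})$ w.h.p.; hence $x_0^T \bar{W}_G x_0 \geq -\text{SDP}(-\bar{W}_G) \geq -2n(1+\delta/2)\sqrt{w_+} - o(n\sqrt{w_+})$. Inserting the hypothesis~\eqref{eq:29} now produces
\begin{equation*}
\text{SDP}(W_G - M_0) \;\geq\; \tfrac{n^2(M_{in}-M_{out})}{2} - 2n(1+\tfrac{\delta}{2})\sqrt{w_+} - o(n\sqrt{w_+}) \;>\; 2n(1+\delta)\sqrt{w_+},
\end{equation*}
valid for all $n$ large enough, since $2(1+\delta) - (1+\tfrac{\delta}{2}) = 1 + \tfrac{3\delta}{2} > 1 + \delta$ and the $o$-term is negligible. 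Therefore $\P_1(T_n = 0) \to 0$.

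The main obstacle I anticipate is the careful parameter selection in Theorem~\ref{Theorem1}: one must simultaneously drive $\tfrac{1}{k-1}$, $\epsilon$, $\tfrac{k-1}{\beta}\log(C/\epsilon)$, and the remaining terms in~\eqref{eq:52} to $o(\sqrt{w_+})$ using only Assumption~\ref{assum:3}; this is the only place where the full strength of the sub-gamma hypothesis is used. A minor technicality is the diagonal of $\Delta$, which depends on the (unspecified) diagonal convention for $M_0$ and $\E_1[W_G]$, but contributes only $O(n(M_{in}-M_{out}))$ to $x_0^T \Delta x_0$ and is swallowed by the leading $\Theta(n^2(M_{in}-M_{out}))$ signal.
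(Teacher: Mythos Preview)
Your proposal is correct and follows essentially the same approach as the paper. The Type~I analysis is identical; for Type~II the paper uses the abstract subadditivity $\text{SDP}(W_G-M_0)\geq \text{SDP}(M_1-M_0)-\text{SDP}(M_1-W_G)$ and then computes $\text{SDP}(M_1-M_0)=\tfrac{n^2}{2}(M_{in}-M_{out})$, whereas you plug in the concrete feasible point $x_0x_0^T$ directly---but since $x_0x_0^T$ is precisely the optimizer of $\text{SDP}(M_1-M_0)$ and $M_1-W_G=-\bar{W}_G$ under $H_1$, the two arguments coincide line by line.
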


\begin{proof} 
Calculating each error separately we obtain the following.

\underline{\textbf{Type I Error:}} Under hypothesis H$_{0}$ we have
\begin{equation}
    \P_{0}(T_{n}(W_{G};\delta)=1) = \P_{0}(\text{SDP}(W_{G}-M_{0})> 2n(1 +\delta)\sqrt{w_{+}})
\end{equation}
Defining the events
\begin{equation}
    \begin{split}
        E' &= \{\text{SDP}(W_{G}-M_{0})>2n(1+\delta)\sqrt{w_{+}}\}\\
        E'' &= \{\text{SDP}(W_{G}-M_{0}) \leq \text{SDP}(\sqrt{w_{+}}B)+n\Lambda_{0}\}
    \end{split}
\end{equation}
where $\text{SDP}(W_{G}-M_{0}) - \text{SDP}(\sqrt{w_{+}}B) \leq n\Lambda_{0}$ and $\Lambda_{0}$ is $\Lambda$ in equation \ref{eq:Lambda} in the case where the mean of $W_{G}$ is $M_{0}$. That is, it is given by
\begin{equation}
\begin{split}
    \Lambda_{0} &= \left[\frac{1}{k-1} + \epsilon \right]\left(\sqrt{\sum_{i\leq j}\frac{\left(\nu_{ij}+\frac{w_{+}}{n}\right)}{n}} + c \right) + \frac{k-1}{\beta}\log{\left(\frac{C}{\epsilon}\right)} + \sqrt{\sum_{1\leq i \leq j \leq n}\left(\nu_{ij}+\frac{w_{+}}{n}\right)n^{\delta-2}} + \frac{\beta^{2}}{n}\sum_{1\leq i \leq j \leq n}(c\nu_{ij}+\nu_{ij}^{\frac{3}{2}})   \\
    &\lesssim \left[\frac{1}{k-1} + \epsilon \right]\left(\sqrt{w_{+}} + \theta\sqrt{w_{+}} \right) + \frac{k-1}{\beta}\log{\left(\frac{C}{\epsilon}\right)} + \beta^{2}\left(\theta w_{+}^{\frac{3}{2}}+ \frac{w_{+}^{\frac{3}{2}}}{n^{\frac{1}{2}}}\right) \\
    &\lesssim [\rho^{\frac{1}{8}} + \rho^{\frac{1}{8}}(\log{(C)}+\log{\rho^{-1}}) + \rho^{\frac{1}{2}}]\sqrt{w_{+}} \coloneqq \lambda_{0}
\end{split}
\end{equation}
where we made the choices
\begin{equation}
\begin{cases} 
    \beta = \frac{1}{\sqrt{w_{+}}\rho^{\frac{1}{4}}}\\
    \epsilon = \min\{\rho,\frac{1}{2}\}\\
    \delta = \frac{1}{2}\\
    k = \max\{8,\lfloor \rho^{-\frac{1}{8}}\rfloor\}
\end{cases}
\end{equation}
with $\rho = \max\{ \theta, n^{-8}\}\}$ to obey $k\leq n$ such that
\begin{equation}\label{eq:lambda}
    \lambda_{0} = o(\sqrt{w_{+}}).
\end{equation}
With that, we obtain
\begin{equation}
    E'\cap E'' \subset \{\text{SDP}(\sqrt{w_{+}}B) > 2n(1+\delta)\sqrt{w_{+}}-n\lambda_{0}\}
\end{equation}
such that
\begin{equation}
    \begin{split}
        \P_{0}(E') &\leq \P_{0}(E'\cap E'') + \P_{0}(E''^{c})\\
        &\leq \P_{0}\left(\text{SDP}(\sqrt{w_{+}}B) > \left(2n(1+\delta)-\frac{n\lambda_{0}}{\sqrt{w_{+}}}\right)\sqrt{w_{+}}\right) + \P_{0}(E''^{c}) = o(1)
    \end{split}
\end{equation}
because for $n$ large enough we have $\frac{\lambda_{0}}{\sqrt{w_{+}}}\leq \delta$ and then
\begin{equation}
    \P_{0}\left(\text{SDP}(\sqrt{w_{+}}B) \geq 2n\left(1+\frac{\delta}{2}\right)\sqrt{w_{+}}\right) = o(1).
\end{equation}

\underline{\textbf{Type II Error:}} Under hypothesis H$_{1}$ we have by same previous reasoning
\begin{equation}
\begin{split}
    \P_{1}(T_{n}(W_{G};\delta)=0) &= \P_{1}(\text{SDP}(W_{G}-M_{0})\leq 2n(1+\delta)\sqrt{w_{+}})\\ 
    &\leq \P_{1}(\text{SDP}(M_{1}-M_{0})-\text{SDP}(M_{1}-W_{G})\leq 2n(1+\delta)\sqrt{w_{+}}) \\
    &= \P_{1}(\Delta - 2n(1+\delta)\sqrt{w_{+}} \leq \text{SDP}(M_{1}-W_{G}))\\
    & \leq \P_{1}(2n(1+\delta)\sqrt{w_{+}} \leq \text{SDP}(M_{1}-W_{G}))\\
\end{split}
\end{equation}
where we used $M_{1}$ is $\E[W_{G}]$ under hypothesis H$_{1}$, $\Delta=\text{SDP}(M_{1}-M_{0})=\frac{n^2}{2}(M_{in}-M_{out})$, $M_{in}\geq M_{out}$, $M_{0}=\frac{M_{out}+M_{in}}{2}\mathbb{1}\mathbb{1}^{T}$ and the hypothesis that we have $\Delta > 4n(1+\delta)\sqrt{w_{+}}$. Continuing, we define similarly 
\begin{equation}
    \begin{split}
        F' &= \{\text{SDP}(W_{G}-M_{1})>2n(1+\delta)\sqrt{w_{+}})\}\\
        F'' &= \{\text{SDP}(W_{G}-M_{1}) \leq \text{SDP}(\sqrt{w_{+}}B)+n\Lambda_{1}\}
    \end{split}
\end{equation}
where $\Lambda_{1}$ is $\Lambda$ in equation \ref{eq:scheme} in the case where the mean of $W_{G}$ is $M_{1}$. The proof ends with the reasoning as in the case of Type I error.
\end{proof}

\subsubsection{Community estimation for case I}\label{subsection:3.1.2}
As we are in the simple case of two communities, we can use a binary estimator of $n$ coordinates to detect the communities. We define a ground truth vector $x_{0} \in \{+1,-1\}^{n}$ and proceed to estimate it.

First, we observe that we can write
\begin{equation}
    \E[W_{G}] = \frac{M_{in}+M_{out}}{2}\mathbb{1}\mathbb{1}^{T} + \frac{M_{in}-M_{out}}{2}x_{0}x_{0}^{T}
\end{equation}
where $\mathbb{1} \in \mathbb{R}^{n}$ is the vector with all entries equal to one and $I \in \mathbb{R}^{n\times n}$ is the identity matrix.

Then we observe that $\E[W_{G}]$ does not change when we invert the sign of $x_{0}$. We obtain that our goal is to estimate $x_{0}$ up to a sign. Observe that $\forall x \in \{+1,-1\}^{n}$ 
\begin{equation}
    -n \leq \langle x, x_{0} \rangle \leq n
\end{equation}
with the bound being attained when $x=\pm x_{0}$. In that way, to have a good estimator we need to obtain a good approximation of $|\langle x,x_{0}\rangle|$ with $n$.

Let us define our ideal optimization problem as
\begin{equation}
    \begin{aligned}
        &\text{maximize} \quad && \tr(\E[W_G] X) \\
        &\text{subject to} \quad && X \in \mathcal{D} =\text{PSD}_1(n)\cap\{\tr{(\mathbb{1}\mathbb{1^{T}}X)=0}\}
    \end{aligned}
\end{equation}
that is, we want to find $X \in\text{PSD}_{1}(n)$ that gives the value of $\text{SDP}(\E[W_{G}])$.

The following proposition shows that the only optimal solution to it is $x_{0}x_{0}^{T}$ and for any other $X$ inside $\text{PSD}_{1}(n)$ it gives a quantification of how good its top eigenvector $v_{1}(X)$ estimates the communities.
\begin{prop}
Let $X\in \text{PSD}_{1}(n)$. Defining
\begin{equation}
    \xi \coloneqq \frac{2}{n^{2}(M_{in}-M_{out})}[\tr(\E[W_{G}]x_{0}x_{0}^{T})-\tr(\E[W_{G}]X)]
\end{equation}    
\end{prop}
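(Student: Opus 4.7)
My plan is to first simplify the defining expression of $\xi$ by using the rank-two structure $\E[W_{G}] = \tfrac{M_{in}+M_{out}}{2}\mathbf{1}\mathbf{1}^{T} + \tfrac{M_{in}-M_{out}}{2}\,x_{0}x_{0}^{T}$ that the balanced model provides. Since the model is balanced, $\mathbf{1}^{T}x_{0}=0$, so a direct computation gives $\tr(\E[W_{G}]x_{0}x_{0}^{T})=\tfrac{n^{2}}{2}(M_{in}-M_{out})$. For $X$ in the feasible set $\mathcal{D}$ of the ideal program (which enforces $\tr(\mathbf{1}\mathbf{1}^{T}X)=0$), only the rank-one piece contributes to $\tr(\E[W_{G}]X)$, so $\tr(\E[W_{G}]X) = \tfrac{M_{in}-M_{out}}{2}\,x_{0}^{T}Xx_{0}$. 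Substituting both identities into the definition of $\xi$ collapses it to the clean form $\xi = 1 - x_{0}^{T}Xx_{0}/n^{2}$, which drives all subsequent reasoning.

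From this identity, nonnegativity and the uniqueness of $x_{0}x_{0}^{T}$ as the optimizer follow quickly. Since $X\succeq 0$ and $X_{ii}=1$, one has $\tr X = n$ and hence $\lambda_{\max}(X)\leq n$; Rayleigh then gives $x_{0}^{T}Xx_{0}\leq \lambda_{\max}(X)\lVert x_{0}\rVert^{2}\leq n\cdot n = n^{2}$, so $\xi\geq 0$. Equality forces $x_{0}/\sqrt{n}$ to be a top eigenvector with eigenvalue exactly $n$, which together with $\tr X = n$ pins down $X = x_{0}x_{0}^{T}$.

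For the quantification of the top eigenvector's alignment with the communities, I would spectrally decompose $X=\sum_{i}\lambda_{i}v_{i}v_{i}^{T}$ with $\sum_{i}\lambda_{i}=n$, and use $|\langle v_{i},x_{0}\rangle|^{2}\leq n$ to bound the tail:
\begin{equation*}
n^{2}(1-\xi) \;=\; x_{0}^{T}Xx_{0} \;=\; \lambda_{1}\langle v_{1},x_{0}\rangle^{2} + \sum_{i\geq 2}\lambda_{i}\langle v_{i},x_{0}\rangle^{2} \;\leq\; \lambda_{1}\langle v_{1},x_{0}\rangle^{2} + n(n-\lambda_{1}),
\end{equation*}
yielding $\langle v_{1},x_{0}\rangle^{2}\geq n - n^{2}\xi/\lambda_{1}$. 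To turn this into an unconditional estimate, I would feed back the Rayleigh lower bound $\lambda_{1}\geq x_{0}^{T}Xx_{0}/\lVert x_{0}\rVert^{2} = n(1-\xi)$, obtaining a clean inequality of the shape $\langle v_{1},x_{0}\rangle^{2}\geq n(1-2\xi)/(1-\xi)$ valid for $\xi<1/2$, which translates (after taking square roots and using $\sqrt{1-t}\geq 1-t$) into a bound of the form $n-|\langle v_{1}(X),x_{0}\rangle|\leq Cn\xi$ of the kind used later in Theorem \ref{Theorem3}.

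The only delicate point I foresee is cosmetic rather than substantive: the proposition's conclusion is stated implicitly through the next theorem, so the right normalization of $v_{1}(X)$ (unit-norm versus scaled to $\sqrt{n}$) and the right measure of misalignment ($\xi$ itself versus $\sqrt{\xi}$, and correlation versus $\ell^{2}$ distance) must be matched to what the downstream application needs. Once that is fixed, turning the Rayleigh bound into the stated estimate is routine; the genuinely non-trivial input is the simplification in the first paragraph, which relies crucially on the balanced-model hypothesis $\mathbf{1}^{T}x_{0}=0$ together with the constraint defining $\mathcal{D}$.
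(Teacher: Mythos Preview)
Your proposal is correct and follows essentially the same route as the paper: reduce $\xi$ to $1-x_{0}^{T}Xx_{0}/n^{2}$ via the rank-two decomposition of $\E[W_{G}]$ and the constraint $\tr(\mathbf{1}\mathbf{1}^{T}X)=0$, then use the spectral decomposition together with the Rayleigh bounds $\lambda_{1}\leq n$ and $\lambda_{1}\geq n(1-\xi)$ to control the top eigenvector. The only cosmetic difference is in the tail estimate for item~2: the paper bounds $\sum_{i\geq 2}\lambda_{i}p_{i}\leq (n-\lambda_{1})(1-p_{1})$ using $\sum_{i}p_{i}=1$, whereas you bound each $\langle v_{i},x_{0}\rangle^{2}\leq n$ separately; your resulting inequality $\langle v_{1},x_{0}\rangle^{2}\geq n(1-2\xi)/(1-\xi)$ is in fact slightly sharper than the paper's $n(1-2\xi)$ and yields the same final bound $n-|\langle v_{1}(X),x_{0}\rangle|\leq 2\xi n$ with the normalization $\lVert v_{1}(X)\rVert^{2}=n$.
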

we obtain 
\begin{enumerate}
    \item $0 \leq n^2 - \langle x_{0}, Xx_{0} \rangle = \frac{2}{M_{in}-M_{out}}[\tr(\E[W_{G}]x_{0}x_{0}^{T})-\tr(\E[W_{G}]X)]=\xi n^{2}$, in particular, $x_{0}x_{0}^{T}$ is the only solution to the ideal optimization problem.
    \item If $\xi \leq \frac{1}{2}$ then normalizing $\lVert v_{1}(X) \rVert^{2} = n$ gives us 
    \begin{equation}
        n - |\langle v_{1}(X),x_{0}\rangle| \leq 2\xi n.
    \end{equation}
\end{enumerate}

\begin{proof}
Let $X \in\text{PSD}_{1}(n)$ and denote the eigenvalues of X as $\lambda_{1} \geq \lambda_{2} \geq ... \geq \lambda_{n} \geq 0$, then we can write
\begin{equation}
    X = \sum_{i=1}^{n}\lambda_{i}y_{i}y_{i}^{T}
\end{equation}
where $\{y_{i}\}_{i=1}^{n}$ is a orthonormal basis of eigenvectors.
Moreover
\begin{equation}
    \tr(X) = \sum_{i}\lambda_{i} = \sum_{i} X_{ii} = n
\end{equation}
and then 
\begin{equation}\label{eq:45}
    n \geq \lambda_{1} = \max_{\lVert x \rVert =1}\langle x, Xx\rangle \geq \frac{\langle x_{0},Xx_{0}\rangle}{n}
\end{equation}
because $\lVert\frac{1}{\sqrt{n}}x_{0}\rVert = 1$. We have equality if and only if $n=\lambda_{1}$ and $\frac{x_{0}}{\sqrt{n}}$ is eigenvector of $X$ corresponding to $\lambda_{1}$.

Now, observe that 
\begin{equation} \label{eq:48}
\begin{split}
    \tr(\E[W_{G}]X) &= \frac{M_{in}-M_{out}}{2} \langle x_{0}, Xx_{0}\rangle\\
    &\leq \frac{(M_{in}-M_{out})n}{2}\lambda_{1}(X)\\
    &\leq \frac{(M_{in}-M_{out})n^{2}}{2} = \tr(\E[W_{G}]x_{0}x_{0}^{T})
\end{split}
\end{equation}
this implies item $1$ of the proposition and, in particular, the equality is achieved when $X=x_{0}x_{0}^{T}$, \textit{i.e.}, this gives the maximum. 
Observe that
\begin{equation}
    \xi = 1 - \frac{\langle x_{0},Xx_{0}\rangle}{n^{2}}.
\end{equation}
Denoting $p_{i} := \frac{1}{n}\langle y_{i}, x_{0}\rangle^{2}$ we obtain
\begin{equation} 
    \sum_{i}p_{i} = \frac{\lVert x_{0}\rVert^{2}}{n}=1
\end{equation}
because $\{y_{i}\}_{i}$ is a orthonormal basis.

Then, using the spectral decomposition of $X$ we obtain
\begin{equation}
\begin{split}
    \xi n &= n - \frac{\langle x_{0},Xx_{0} \rangle}{n}= n - \sum_{i}\frac{\lambda_{i}\langle x_{0},y_{i}y_{i}^{T}x_{0}\rangle}{n} = n - \sum_{i}\frac{\lambda_{i}\langle y_{i},x_{0}\rangle^{2}}{n}\\
    &= n -\sum_{i}\lambda_{i}p_{i} = n - \lambda_{1}p_{1} - \sum_{i\geq2}\lambda_{i}p_{i} \geq n - \lambda_{1}p_{1} - \sum_{i\geq2}\lambda_{i}\sum_{j\geq2}p_{j}\\
    &= n - \lambda_{1}p_{1} - (n-\lambda_{1})(1-p_{1}) = np_{1} + \lambda_{1} - 2\lambda_{1}p_{1}
\end{split}
\end{equation}
Then
\begin{equation}
    (2\lambda_{1}-n)p_{1} \geq \lambda_{1} - \xi n = (1-\xi)n - (n-\lambda_{1}) \geq (1-2\xi)n
\end{equation}
where in the last inequality we used $\tr(X) - \lambda_{1} = n - \lambda_{1} \leq n\xi$
by equations \ref{eq:48} and \ref{eq:45}. 

Using $\lambda_{1}\leq n$, we deduce
\begin{equation}
    p_{1} \geq \frac{(1-2\xi)n}{2\lambda_{1}-n} \geq \frac{(1-2\xi)n}{2n-n} = (1-2\xi) \implies \langle y_{1},x_{0}\rangle^{2} \geq (1-2\xi)n
\end{equation}
Taking the re-normalized eigenvector $v_{1}(X)=\sqrt{n}y_{1}$ and remembering we are considering $\xi<\frac{1}{2}$ (just to have a positive number inside the below square root) we obtain
\begin{equation}
|\langle v_{1}(X),x_{0}\rangle| = \sqrt{n}|\langle y_{1},x_{0}\rangle| \geq \sqrt{n}\sqrt{(1-2\xi)n} = \sqrt{(1-2\xi)}n
\end{equation}
and then 
\begin{equation}\label{eq:52}
    n - |\langle v_{1}(X),x_{0}\rangle| \leq (1 - \sqrt{1-2\xi})n \leq (1 - \sqrt{1-2\xi})(1 + \sqrt{1-2\xi})n = 2\xi n
\end{equation}
as we wanted.
\end{proof}

As our goal is to discover $\E[W_{G}]$ we obviously do not have access to it to solve the ideal optimization problem. However, we can define the equivalent problem using the observed $W_{G}$.
\begin{equation}
    \begin{aligned}
        &\text{maximize} \quad && \tr([W_G] X) \\
        &\text{subject to} \quad && X \in \mathcal{D} =\text{PSD}_1(n)\cap\{\tr{(\mathbb{1}\mathbb{1^{T}}X)=0}\}
    \end{aligned}
\end{equation}
The following proposition gives us how good is the estimator based on the above optimization problem.
\begin{prop}
    Let $\hat{X}$ be a solution of the above optimization problem and $\hat{v} = v_{1}(\hat{X})$ be its top eigenvector normalized such that $\lVert v_{1}(\hat{X}) \rVert^{2} =n$. Defining the rounding operator as
    \begin{equation}
    \begin{split}
        s: \mathbb{R}^{n} \to \{-1,+1\}^{n}\\
        s(v)_{i} = sign(v_{i})
    \end{split}
    \end{equation}
    and an error term as
    \begin{equation}
        Err(W_{G}) \coloneqq \sup\{\tr((W_{G}-\E[W_{G}])X): \, X\in \mathcal{D})\}
    \end{equation}
    then, if
\begin{equation}
    Err(W_{G})\leq \frac{(M_{in}-M_{out})n^{2}}{8}
\end{equation}
    we obtain
    \begin{equation}
        n - |\langle s(v_{1}(\hat{X})),x_{0}\rangle| \leq \frac{32 Err(W_{G})}{(M_{in}-M_{out})n}
    \end{equation}
\end{prop}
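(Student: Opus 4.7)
The plan is to combine the previous proposition (which controls the quality of any PSD-matrix estimator in terms of how close its objective value is to the optimal $\text{tr}(\E[W_G]\,x_0x_0^T)$) with a noise-deviation argument for $\hat X$ and a rounding estimate. First, I would verify that $x_0x_0^T$ is feasible for the observed optimization problem. It is PSD of rank one with diagonal entries $x_{0i}^2=1$, and because the communities are balanced we have $\langle\mathbb{1},x_0\rangle=0$, so $\tr(\mathbb{1}\mathbb{1}^T x_0x_0^T)=\langle\mathbb{1},x_0\rangle^2=0$. Hence $x_0x_0^T\in\mathcal{D}$.

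Next, because $\hat X$ maximizes $\tr(W_G\,\cdot\,)$ over $\mathcal{D}$, we have $\tr(W_G\hat X)\geq\tr(W_G\,x_0x_0^T)$. I would then do the standard noise decomposition
\begin{equation*}
\tr(\E[W_G]\,x_0x_0^T) - \tr(\E[W_G]\hat X) \;\leq\; \tr\bigl((W_G-\E[W_G])\hat X\bigr) \;+\; \tr\bigl((\E[W_G]-W_G)\,x_0x_0^T\bigr)\;\leq\; 2\,Err(W_G),
\end{equation*}
where both deviation terms are controlled by $Err(W_G)$ because $\hat X,x_0x_0^T\in\mathcal{D}$. Setting
\begin{equation*}
\xi \;=\; \frac{2}{n^{2}(M_{in}-M_{out})}\bigl[\tr(\E[W_G]\,x_0x_0^T) - \tr(\E[W_G]\hat X)\bigr] \;\leq\; \frac{4\,Err(W_G)}{n^{2}(M_{in}-M_{out})},
\end{equation*}
the hypothesis $Err(W_G)\leq (M_{in}-M_{out})n^{2}/8$ gives $\xi\leq 1/2$, so the previous proposition yields $n-|\langle v_1(\hat X),x_0\rangle|\leq 2\xi n\leq 8\,Err(W_G)/[(M_{in}-M_{out})n]$.

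Finally, I would show that sign-rounding inflates the error by at most a factor of four. Writing $v=v_1(\hat X)$ with $\lVert v\rVert^{2}=n$ and assuming without loss of generality $\langle v,x_0\rangle\geq 0$, let $D=\{i:\mathrm{sign}(v_i)\neq x_{0i}\}$. For each $i\in D$, $v_i$ and $x_{0i}$ have opposite signs, so $(v_i-x_{0i})^{2}\geq 1$, hence $|D|\leq\lVert v-x_0\rVert^{2}$. Since $\lVert s(v)-x_0\rVert^{2}=4|D|$ and $\lVert v-x_0\rVert^{2}=2(n-\langle v,x_0\rangle)$, we get
\begin{equation*}
n-\langle s(v),x_0\rangle \;=\; \tfrac{1}{2}\lVert s(v)-x_0\rVert^{2} \;\leq\; 2\lVert v-x_0\rVert^{2} \;=\; 4\bigl(n-\langle v,x_0\rangle\bigr).
\end{equation*}
Chaining the two bounds produces exactly $32\,Err(W_G)/[(M_{in}-M_{out})n]$.

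The only non-routine step is the noise-deviation bound: formally $Err(W_G)$ is a supremum, so one must be a little careful in arguing that it also controls $\tr((\E[W_G]-W_G)\,x_0x_0^T)$; this follows because $x_0x_0^T\in\mathcal{D}$ and an analogous sup bound applies (or, equivalently, one invokes the triangle inequality after rewriting the sign). Everything else — feasibility of $x_0x_0^T$, the use of the previous proposition, and the rounding estimate — is direct.
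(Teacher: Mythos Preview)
Your proposal is correct and follows essentially the same route as the paper: feasibility of $x_0x_0^T\in\mathcal{D}$, the noise decomposition giving $\tr(\E[W_G]x_0x_0^T)-\tr(\E[W_G]\hat X)\leq 2\,Err(W_G)$ and hence $\xi\leq 4\,Err(W_G)/[(M_{in}-M_{out})n^2]\leq 1/2$, invocation of the previous proposition, and the rounding estimate $\|s(v)-x_0\|^2\leq 4\|v-x_0\|^2$ to gain the final factor of four. The paper presents the same ingredients in a slightly different order (rounding first, then $\xi$, then the $Err$ bound) and is less explicit than you are about the feasibility of $x_0x_0^T$ and about the sign issue in bounding $\tr((\E[W_G]-W_G)x_0x_0^T)$ by $Err(W_G)$; your flagging of that last point is appropriate, as the one-sided definition of $Err$ does leave a small gap that both you and the paper gloss over.
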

\begin{proof}
As $s(\hat{v})$ and $x_{0}$ have entries in $\{-1,+1\}$ we obtain
\begin{equation}
    (s(\hat{v})_{i}-x_{0i})^{2} = 4\mathbb{1}\{s(\hat{v})_{i}\neq x_{0i}\}.
\end{equation}
Observing that $(\hat{v}_{i} - x_{0i})^{2} \geq 1$ when $\hat{v}_{i} \leq 0$ and $x_{0i}=1$ or when $\hat{v}_{i} \geq 0$ and $x_{0i}=-1$, that is, when $s(\hat{v})_{i}\neq x_{0i}$, we conclude $(\hat{v}_{i}-x_{0i})^{2} \geq \mathbb{1}\{s(\hat{v})_{i}\neq x_{0i}\}$ and then
\begin{equation}
\begin{split}
    (s(\hat{v})_{i}-x_{0i})^{2} &\leq 4(\hat{v}-x_{0i})^{2}\\
    \lVert s(\hat{v}) - x_{0} \rVert^{2} &\leq 4\lVert \hat{v} - x_{0} \rVert^{2}
\end{split}
\end{equation}
using that $\lVert \hat{v} \rVert^{2}=\lVert x_{0}\rVert^{2} = \lVert s(\hat{v}) \rVert^{2}=n$ we obtain
\begin{equation}
\begin{split}
    n - 2\langle s(\hat{v}),x_{0} \rangle +n &\leq 4(n-2\langle \hat{v},x_{0}\rangle +n)\\
    2n-2\langle s(\hat{v}),x_{0}\rangle &\leq 4(2n -2\langle \hat{v}, x_{0}\rangle)\\
    n-\langle s(\hat{v}),x_{0}\rangle &\leq 4(n-\langle \hat{v},x_{0}\rangle)
\end{split}
\end{equation}
Now, by the previous proposition, as $\hat{X}$ belongs to the same set we obtain by \ref{eq:52}
\begin{equation}
    n-\langle s(\hat{v}),x_{0}\rangle \leq 4(n-\langle \hat{v},x_{0}\rangle) \leq 8\xi n
\end{equation}
as long as we have
\begin{equation}
    \xi := \frac{2}{(M_{in}-M_{out})n^{2}}(\tr(\E[W_{G}]x_{0}x_{0}^{T})-\tr(\E[W_{G}]\hat{X})) < \frac{1}{2}.
\end{equation}
Again, as $\hat{X}$ attains the maximum of $\tr(W_{G}X)$ in the viable set and we have
\begin{equation}
    \tr(W_{G}x_{0}x_{0}^{T}) - \tr(W_{G}\hat{X}) \leq 0 \implies \tr(\E[W_{G}]x_{0}x_{0}^{T})-\tr(\E[W_{G}]\hat{X})\leq 2Err(W_{G})
\end{equation}
then
\begin{equation}
    \xi \leq \frac{4Err(W_{G})}{(M_{in}-M_{out})n^{2}}
\end{equation}
by the hypothesis on $Err(W_{G})$ we obtain $\xi < \frac{1}{2}$ and the bound follows
\begin{equation}
   n-|\langle s(\hat{v})),x_{0}\rangle| \leq \frac{32 Err(W_{G})}{n(M_{in}-M_{out})}
\end{equation}
\end{proof}

We observe that $\mathcal{D}\subset \text{PSD}_{1}$ such that the following is valid with high probability by Theorem \ref{Theorem1} and equation \ref{eq:lambda}
\begin{equation}\label{eq:122}
    \textit{Err}(W_{G}) \leq \text{SDP}(W_{G}-\E[W_{G}]) \lesssim \text{SDP}\left(\sqrt{w_{+}}B \right) + o(\sqrt{w_{+}}) \leq 2n(1+\delta)\sqrt{w_{+}} + o(\sqrt{w_{+}})
\end{equation}
for any $\delta>0$. Then, choosing the variances and means such that
\begin{equation}
2n\sqrt{w_{+}} + o(\sqrt{w_{+}}) \leq \frac{(M_{in}-M_{out})n^2}{8} 
\end{equation}
we have what we wanted.

All the results above and the value of $\textit{Err}(W_{G})$ gives us the following theorem.
\begin{teo}\label{Theorem3}
Let $\hat{v}$ be the previous estimator. We obtain asymptotically with high probability 
\begin{equation}
   n- |\langle \hat{v},x_{0}\rangle| \leq \frac{64}{M_{in}-M_{out}}\sqrt{w_{+}} + o(\sqrt{w_{+}})
\end{equation}
such that we obtain a partial recovery of the communities as, by assumption, $w_{+}=o(n)$.
\end{teo}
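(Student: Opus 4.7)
The plan is to deduce Theorem \ref{Theorem3} by assembling three ingredients that are already in place: the deterministic bound from the previous proposition (which controls $n-|\langle \hat{v},x_{0}\rangle|$ by $\textit{Err}(W_{G})$), the universality result of Theorem \ref{Theorem1} (which compares $\text{SDP}(W_{G}-\E[W_{G}])$ with $\text{SDP}(\sqrt{w_{+}}B)$), and the BBAP-type asymptotic \eqref{GOEtheorem} (which bounds $\text{SDP}(\sqrt{w_{+}}B)$ in terms of $\sqrt{w_{+}}$). In brief, the previous proposition is a conditional deterministic estimate, and the rest of the work is to verify, with probability tending to one, both that its hypothesis $\textit{Err}(W_{G}) \leq (M_{in}-M_{out})n^{2}/8$ holds and that its conclusion gives the desired quantitative bound.

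First I would record the chain already displayed as \eqref{eq:122}: since $\mathcal{D}\subset \text{PSD}_{1}(n)$, one has deterministically
\begin{equation*}
\textit{Err}(W_{G}) \;\leq\; \text{SDP}(W_{G}-\E[W_{G}]).
\end{equation*}
Applying Theorem \ref{Theorem1} with the parameter choices used in the proof of Theorem \ref{Theorem2} (the $\beta,\epsilon,\delta,k$ calibrated by $\rho=\max\{\theta,n^{-8}\}$), together with Proposition \ref{prop:1} to pass from the Gaussian equivalent $D$ to $\sqrt{w_{+}}B$, yields
\begin{equation*}
\text{SDP}(W_{G}-\E[W_{G}]) \;\leq\; \text{SDP}(\sqrt{w_{+}}B) \;+\; o(\sqrt{w_{+}})
\end{equation*}
with probability at least $1-\max(4^{-n},n^{-\delta})$. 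Combining this with the GOE estimate \eqref{GOEtheorem}, namely $\text{SDP}(\sqrt{w_{+}}B) \leq 2n(1+\delta')\sqrt{w_{+}}$ w.h.p.\ for any $\delta'>0$, produces
\begin{equation*}
\textit{Err}(W_{G}) \;\leq\; 2n(1+\delta')\sqrt{w_{+}} \;+\; o(\sqrt{w_{+}}) \qquad \text{w.h.p.}
\end{equation*}

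Second, I would verify that the hypothesis of the previous proposition is satisfied asymptotically. By Assumption \ref{assum:3}, $w_{+}=o(n^{2})$, so the right-hand side above is $o(n^{2}(M_{in}-M_{out}))$ under the usual mean-separation scaling, and in particular it is eventually dominated by $(M_{in}-M_{out})n^{2}/8$. Hence the previous proposition applies on an event of probability tending to one, yielding
\begin{equation*}
n - |\langle \hat{v}, x_{0}\rangle| \;\leq\; \frac{32\,\textit{Err}(W_{G})}{(M_{in}-M_{out})\,n} \;\leq\; \frac{64(1+\delta')\sqrt{w_{+}}}{M_{in}-M_{out}} \;+\; o(\sqrt{w_{+}}).
\end{equation*}
Since $\delta'>0$ is arbitrary, the factor $(1+\delta')$ can be absorbed into the $o(\sqrt{w_{+}})$ term (taking $\delta'\to 0$ along a sequence), producing the constant $64$ stated in the theorem.

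I do not expect a genuinely hard step: the proof is essentially an assembly of previously established estimates. The only point that needs some care is the bookkeeping around the parameter $\delta'$ and the universality error $n\lambda_{0}$ — one must make sure that the $o(\sqrt{w_{+}})$ terms from Theorem \ref{Theorem1} and from the GOE bound both hold simultaneously on a single high-probability event, and that the hypothesis $\textit{Err}(W_{G}) \leq (M_{in}-M_{out})n^{2}/8$ holds there as well, so that the conclusion of the previous proposition is legitimately invoked.
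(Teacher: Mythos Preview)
Your proposal is correct and follows essentially the same approach as the paper: both combine the previous proposition's deterministic bound $n-|\langle \hat{v},x_0\rangle|\leq 32\,\textit{Err}(W_G)/((M_{in}-M_{out})n)$ with the chain \eqref{eq:122}, namely $\textit{Err}(W_G)\leq \text{SDP}(W_G-\E[W_G])\leq \text{SDP}(\sqrt{w_+}B)+o(\sqrt{w_+})\leq 2n(1+\delta)\sqrt{w_+}+o(\sqrt{w_+})$, after verifying the hypothesis $\textit{Err}(W_G)\leq (M_{in}-M_{out})n^2/8$ holds on the same high-probability event. Your treatment of the $(1+\delta')$ factor is actually slightly more explicit than the paper's, which simply states the constant $64$ without comment.
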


\subsection{Case II: Distinguishing between r and s communities}\label{subsection:3.2}

Now, we consider the case of trying to distinguish between any two number of communities greater than two. Our main contribution will be to find a lower bound with appropriate order for \text{SDP}$(M_{r}-M_{s})$ where $M_{i}$ is the matrix of means in the hypothesis that we have $i\geq 2$ communities. It is necessary to do the distinction between case II and case I  because our theorem that gives this lower bound (proved in full details in appendix \hyperref[sec:appA]{A}) do not have a natural generalization to case I.

\subsubsection{Hypothesis tests for case II}\label{subsection:3.2.1}

We denote the number of communities by $r$ or $s$ with $r>s\geq 2$ such that $r$ communities represents the alternative hypothesis.

We define the following test
\begin{equation}
    T_{n,s}(W_{G};\delta) = \mathbb{1}_{\text{SDP}(X-M_{s})>  2n(1 +\delta)\sqrt{w_{+}}}.
\end{equation}
where $M_{s}$ is $\E[W_{G}]$ in the hypothesis we have $s$ communities.
 
Similar to previous calculations, let $\delta>0$ and $T_{n,s}(\cdot;\delta)$ be the test that identifies the community structure. We want again to have the errors of false positive and false negative going to zero asymptotically, that is
\begin{equation}
    \P_{0}(T_{n,s}(W_{G};\delta)=1)+\P_{1}(T_{n,s}(W_{G};\delta)=0) \xrightarrow{n \to \infty} 0
\end{equation}
where $\P_{0}$ is the probability under hypothesis  $H_{s}$: there are s communities and $\P_{1}$ is the probability under hypothesis $H_{r}$: there are r communities.

Analogously to the previous section, we have the following theorem.
\begin{teo}\label{Theorem4} For $\delta>0$, if we have
\begin{equation}\label{eq:74}
\frac{2n^2}{r^2s^2}(M_{in}-M_{out}) >4n(1+\delta)\sqrt{w_{+}} 
\end{equation}
then
\begin{equation}
    \P_{0}(T_{n,s}(W_{G};\delta)=1)+\P_{1}(T_{n,s}(W_{G};\delta)=0) \xrightarrow{n \to \infty} 0.
\end{equation}
\end{teo}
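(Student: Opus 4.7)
The plan is to mirror the proof of Theorem~\ref{Theorem2}, replacing the pair $(M_0,M_1)$ by $(M_s,M_r)$ and substituting the exact identity $\text{SDP}(M_1-M_0)=\frac{n^2}{2}(M_{in}-M_{out})$ by the lower bound on $\text{SDP}(M_r-M_s)$ proved in Appendix~B. I would split the total error into a Type~I part (under $H_s$) and a Type~II part (under $H_r$), and in each case reduce an SDP statistic built from $W_G$ to one built from the GOE matrix $\sqrt{w_+}B$ via Theorem~\ref{Theorem1}, so that the asymptotic~\eqref{GOEtheorem} controls what remains.

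For the Type~I error the argument is verbatim Case~I with $M_0$ replaced by $M_s$. Starting from
\begin{equation*}
\P_0(T_{n,s}(W_G;\delta)=1) = \P_0\bigl(\text{SDP}(W_G-M_s) > 2n(1+\delta)\sqrt{w_+}\bigr),
\end{equation*}
I would introduce the events $E' = \{\text{SDP}(W_G-M_s) > 2n(1+\delta)\sqrt{w_+}\}$ and $E''=\{\text{SDP}(W_G-M_s)\leq \text{SDP}(\sqrt{w_+}B)+n\Lambda_s\}$, where $\Lambda_s$ is the universality gap produced by the scheme~\eqref{eq:scheme} at mean $M_s$. Recycling the parameter choices used for $\Lambda_0$ in the proof of Theorem~\ref{Theorem2} would yield $\Lambda_s=o(\sqrt{w_+})$, and on $E'\cap E''$ one then has $\text{SDP}(\sqrt{w_+}B)\geq 2n(1+\delta/2)\sqrt{w_+}$ for $n$ large, an event of probability $o(1)$ by~\eqref{GOEtheorem}; since the complement $E''^c$ has probability $o(1)$ by Theorem~\ref{Theorem1}, the Type~I error vanishes.

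For the Type~II error, under $H_r$ the sub-additivity of the support function $\text{SDP}(\cdot)$ gives
\begin{equation*}
\text{SDP}(W_G-M_s) \geq \text{SDP}(M_r-M_s) - \text{SDP}(M_r-W_G),
\end{equation*}
whence
\begin{equation*}
\P_1(T_{n,s}(W_G;\delta)=0) \leq \P_1\bigl(\text{SDP}(M_r-W_G) \geq \text{SDP}(M_r-M_s) - 2n(1+\delta)\sqrt{w_+}\bigr).
\end{equation*}
At this step I would invoke the Appendix~B lower bound $\text{SDP}(M_r-M_s)\geq \tfrac{2n^2}{r^2 s^2}(M_{in}-M_{out})$; combined with the hypothesis~\eqref{eq:74} it gives $\text{SDP}(M_r-M_s) - 2n(1+\delta)\sqrt{w_+}\geq 2n(1+\delta)\sqrt{w_+}$, so the remaining probability has exactly the Type~I form but with $W_G$ centered at $M_r$ and universality gap $\Lambda_r$. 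The same parameter choices give $\Lambda_r=o(\sqrt{w_+})$, because Assumption~\ref{assum:3} controls the sub-gamma parameters uniformly in the mean matrix, and \eqref{GOEtheorem} closes the estimate.

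The genuinely new ingredient, and hence the main obstacle, is the lower bound $\text{SDP}(M_r-M_s)\geq \tfrac{2n^2}{r^2 s^2}(M_{in}-M_{out})$. In Case~I the matrix $M_1-M_0$ was a rank-one multiple of $x_0 x_0^T$ whose SDP value could be computed exactly, but in Case~II the block structure of $M_r-M_s$ depends on how the partition into $r$ equal-sized communities interacts with the partition into $s$ equal-sized communities, and one must exhibit an explicit feasible $X\in\text{PSD}_1(n)$ whose inner product with $M_r-M_s$ realizes the claimed order. That combinatorial construction is exactly what Appendix~B carries out; the factor $\tfrac{2n^2}{r^2 s^2}$ emerges from that analysis, and once it is in hand the rest of the proof is a direct transcription of the Case~I argument.
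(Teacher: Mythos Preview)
Your proposal is correct and follows essentially the same approach as the paper's own proof: the Type~I and Type~II errors are handled exactly as you describe, the key new ingredient is indeed the Appendix~B lower bound $\text{SDP}(M_r-M_s)\geq \tfrac{2n^2}{r^2s^2}(M_{in}-M_{out})$, and the rest is a direct transcription of the Case~I argument with $(M_0,M_1)$ replaced by $(M_s,M_r)$.
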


\begin{proof} Again, using theorem $5$ of \cite{montanari} with $\lambda=0$ we obtain
\begin{equation}
    \P( \text{SDP}\left(\sqrt{w_{+}}B\right) \leq 2n(1+\delta)\sqrt{w_{+}}) = 1 - o(1),
\end{equation}
$\P=\P_{0}$ or $\P=\P_{1}$. With the calculations below we will see what conditions do we need to have the two types of errors going to zero asymptotically.

The analysis is similar to the one done in previous section.

\underline{\textbf{Type I Error:}} Under hypothesis H$_{s}$ we have
\begin{equation}
    \P_{0}(T_{n,s}(W_{G};\delta)=1) = \P_{0}(\text{SDP}(W_{G}-M_{s})> 2n(1 +\delta)\sqrt{w_{+}})
\end{equation}
Defining the events
\begin{equation}
    \begin{split}
        E' &= \{\text{SDP}(W_{G}-M_{s})>2n(1+\delta\sqrt{w_{+}})\}\\
        E'' &= \{\text{SDP}(W_{G}-M_{s}) \leq \text{SDP}(\sqrt{w_{+}}B)+n\Lambda_{s}\}
    \end{split}
\end{equation}
where $\text{SDP}(W_{G}-M_{s}) - \text{SDP}(\sqrt{w_{+}}B) \leq n\Lambda_{s}$ and $\Lambda_{s}$ is $\Lambda$ in the upper bound given by equation \ref{eq:scheme} in the case where the mean of $W_{G}$ is $M_{s}$. This is also upper bounded by $\lambda_{0}$ of the previous section. 

With that, we obtain again
\begin{equation}
    E'\cap E'' \subset \{\text{SDP}(\sqrt{w_{+}}B) > 2n(1+\delta)\sqrt{w_{+}}-n\lambda_{0}\}
\end{equation}
such that
\begin{equation}
    \begin{split}
        \P_{0}(E') &\leq \P_{0}(E'\cap E'') + \P_{0}(E''^{c})\\
        &\leq \P_{0}\left(\text{SDP}(\sqrt{w_{+}}B) > \left(2n(1+\delta)-\frac{n\lambda_{0}}{\sqrt{w_{+}}}\right)\sqrt{w_{+}}\right) + \P_{0}(E''^{c}) = o(1)
    \end{split}
\end{equation}
because for $n$ large enough we have $\frac{\lambda_{0}}{\sqrt{w_{+}}}\leq \delta$ and then
\begin{equation}
    \P_{0}\left(\text{SDP}(\sqrt{w_{+}}B) \geq 2n\left(1+\frac{\delta}{2}\right)\sqrt{w_{+}}\right) = o(1).
\end{equation}

\underline{\textbf{Type II Error:}} Under hypothesis H$_{r}$ we have again by same reasoning of previous subsection
\begin{equation}
\begin{split}
    \P_{1}(T_{n,s}(W_{G};\delta)=0) &= \P_{1}(\text{SDP}(W_{G}-M_{s})\leq 2n(1+\delta)\sqrt{w_{+}})\\ 
    &\leq \P_{1}(\text{SDP}(M_{r}-M_{s})-\text{SDP}(M_{r}-W_{G})\leq 2n(1+\delta)\sqrt{w_{+}}) \\
    &= \P_{1}(\Delta -2n(1+\delta)\sqrt{w_{+}} \leq \text{SDP}(M_{r}-W_{G}))\\
    &\leq \P_{1}(2n(1+\delta)\sqrt{w_{+}} \leq \text{SDP}(M_{r}-W_{G})) 
\end{split}
\end{equation}
where in the last inequality we used the result of Appendix \hyperref[sec:appB]{B} 
\begin{equation}
\Delta=\text{SDP}(M_{r}-M_{s}) \geq \frac{2n^2}{r^2s^2}(M_{in}-M_{out}),
\end{equation}
together with the hypothesis that we have $\Delta > 4n(1+\delta)\sqrt{w_{+}}$. Again, we define
\begin{equation}
    \begin{split}
        F' &= \{\text{SDP}(W_{G}-M_{r})>2n(1+\delta)\sqrt{w_{+}})\}\\
        F'' &= \{\text{SDP}(W_{G}-M_{r}) \leq \text{SDP}(\sqrt{w_{+}}B)+n\Lambda_{r}\}
    \end{split}
\end{equation}
where $\Lambda_{r}$ is $\Lambda$ in equation \ref{eq:scheme} in the case where the mean of $W_{G}$ is $M_{r}$. The proof ends again with the same reasoning as of Type I error.

\end{proof}

\subsubsection{Community estimation for case II}\label{subsection:3.2.2}

We conduct a similar analysis as in the case of two communities. However, instead of using a ground truth vector \( x_0 \in \{-1, +1\}^n \), we employ the membership matrix defined in \cite{chretien2021learning}, the definition of which we repeat below.

\begin{defn}
    Let $C_{1},C_{2},...,C_{k}$ be a partition of the vertex set. We denote $i \sim j$ if vertex $i$ and $j$ are in the same community and $i \not\sim j$ otherwise. The membership matrix $Z_{0}$ is defined as
    \begin{equation}
        Z_{0 ij} \coloneqq \begin{cases}
            1, \, i\sim j\\ 
            0, \, i\not\sim j.
        \end{cases}
    \end{equation}
\end{defn}
Following the proof of lemma $6.1.$ in \cite{guedon2016community} this matrix is a solution of a semidefinite programming, explicitly:
\begin{equation}
    Z_{0} \in \argmax_{Z \in \mathcal{C}} \langle \E[W_{G}],Z\rangle
\end{equation}
where  the set of constraints is
\begin{equation}
    \mathcal{C} = \left\{Z \in \mathbb{R}^{n\times n}, Z \succeq 0, Z \geq 0, \text{diag}(Z) = I_{n}, \sum_{i,j=1}^{n}Z_{ij} = \lambda\right\} \quad \text{and} \quad \lambda = \sum_{i,j}^{n}Z_{0ij} = \sum_{l=1}^{K} |C_{l}|^{2}.
\end{equation}

\begin{obs}
    Here we substitute the condition on $\text{diag}(Z) \preceq I_{n}$ in the definition of $\mathcal{C}$ in \cite{guedon2016community} by following its Remark $1.7$.
\end{obs}

In our case of $K$ equally sized communities we have
\begin{equation}
    \lambda = K\left(\frac{n}{K}\right)^{2} = \frac{n^2}{K}
\end{equation}

A natural error measure $e(\hat{Z})$ for the estimation is counting how many wrong pair of vertices we estimate, that is
 \begin{equation}\label{eq:def_error}
     e(\hat{Z}) := \sum_{ij}|r(\hat{Z})_{ij} - Z_{0ij}| 
 \end{equation}
where we do a rounding procedure
\begin{equation}
    r(\hat{Z}) \coloneqq \begin{cases}
        1, \, \text{if} \, \hat{Z}_{ij} > \frac{1}{2} \\
        0, \, \text{if} \, \hat{Z}_{ij} \leq \frac{1}{2}.
    \end{cases}
\end{equation}
Defining again an error related to the SDP as
\begin{equation}
    \textit{Err}(W_{G}) \coloneqq \sup\{|\tr((W_{G}-\E[W_{G}])X)|:X\in \mathcal{C}\} 
\end{equation}
we obtain the following result concerning the estimation of the membership matrix
\begin{prop} Given the functions $e$ and $Err$ defined above and the following estimator for the membership matrix
\begin{equation}
    \hat{Z} \in \argmax_{Z \in \mathcal{C}}\langle W_{G}, Z \rangle
\end{equation}
we obtain
\begin{equation}
    e(\hat{Z}) \leq \frac{16 Err(W_{G})}{M_{in} - M_{out}}
\end{equation}
\end{prop}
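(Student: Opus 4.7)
The plan is to mirror the community-estimation proof from Subsection 3.1.2, replacing the ground-truth vector $x_0$ by the membership matrix $Z_0$ and the sign rounding by the entrywise rounding $r$. The starting point is optimality of $\hat{Z}$ together with $Z_0\in\mathcal{C}$ (recalled from \cite{guedon2016community}), which gives $\langle W_G,\hat{Z}-Z_0\rangle\ge 0$. Adding and subtracting $\E[W_G]$ and using the definition of $\textit{Err}(W_G)$ (valid because both $\hat{Z}$ and $Z_0$ lie in $\mathcal{C}$) yields
\begin{equation*}
\langle \E[W_G],\,Z_0-\hat{Z}\rangle \;\le\; \langle W_G-\E[W_G],\,\hat{Z}-Z_0\rangle \;\le\; 2\,\textit{Err}(W_G).
\end{equation*}

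The first key step is to identify the left-hand side with half the entrywise $\ell_1$-distance between $\hat{Z}$ and $Z_0$. For this I would write $\E[W_G]=M_{out}\mathbb{1}\mathbb{1}^{T}+(M_{in}-M_{out})Z_0$; the $M_{out}$-term drops out because the constraint $\sum_{ij}Z_{ij}=\lambda$ is shared by every element of $\mathcal{C}$, so $\langle \mathbb{1}\mathbb{1}^{T},Z_0-\hat{Z}\rangle=0$. What remains is $(M_{in}-M_{out})\sum_{ij:Z_{0ij}=1}(1-\hat{Z}_{ij})$. The constraints defining $\mathcal{C}$ force $\hat{Z}_{ij}\in[0,1]$ (the diagonal is $1$ and the matrix is PSD and entrywise nonnegative), and the sum-to-$\lambda$ constraint rewritten as $\sum_{ij}(\hat{Z}_{ij}-Z_{0ij})=0$ gives $\sum_{ij:Z_{0ij}=1}(1-\hat{Z}_{ij})=\sum_{ij:Z_{0ij}=0}\hat{Z}_{ij}$. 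Hence
\begin{equation*}
\langle \E[W_G],\,Z_0-\hat{Z}\rangle \;=\; \frac{M_{in}-M_{out}}{2}\sum_{ij}|\hat{Z}_{ij}-Z_{0ij}|,
\end{equation*}
and combining with the optimality bound one obtains $\sum_{ij}|\hat{Z}_{ij}-Z_{0ij}|\le 4\,\textit{Err}(W_G)/(M_{in}-M_{out})$.

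The second step is to pass from this $\ell_1$ bound on $\hat{Z}-Z_0$ to the rounded error $e(\hat{Z})$. Since both $r(\hat{Z})_{ij}$ and $Z_{0ij}$ are in $\{0,1\}$, each summand of $e(\hat{Z})$ is $0$ or $1$, so $e(\hat{Z})=\sum_{ij}(r(\hat{Z})_{ij}-Z_{0ij})^{2}$. Whenever $r(\hat{Z})_{ij}\ne Z_{0ij}$, the definition of $r$ forces $|\hat{Z}_{ij}-Z_{0ij}|\ge 1/2$, which gives the entrywise bound $(r(\hat{Z})_{ij}-Z_{0ij})^{2}\le 4(\hat{Z}_{ij}-Z_{0ij})^{2}$. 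Using $|\hat{Z}_{ij}-Z_{0ij}|\le 1$ to absorb the square and summing over $(i,j)$ yields $e(\hat{Z})\le 4\sum_{ij}|\hat{Z}_{ij}-Z_{0ij}|$, and plugging in the previous inequality gives the stated bound.

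I do not expect any substantial obstacle here. The only subtle point is the combinatorial rewriting of $\langle \E[W_G],Z_0-\hat{Z}\rangle$ as half the $\ell_1$-distance, which crucially uses the shared trace constraint $\sum_{ij}Z_{ij}=\lambda$ built into $\mathcal{C}$; without it, the $M_{out}$-term would not cancel and one would be stuck controlling a signed combination rather than a norm. The rest is a direct matrix-valued analogue of the argument already carried out for $K=2$, so the proof is essentially a careful translation of Subsection 3.1.2 into the membership-matrix setting.
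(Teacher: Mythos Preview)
Your proposal is correct and follows essentially the same route as the paper: both use optimality of $\hat{Z}$ with $Z_0\in\mathcal{C}$ to get $\langle \E[W_G],Z_0-\hat{Z}\rangle\le 2\,\textit{Err}(W_G)$, exploit the decomposition $\E[W_G]=M_{out}\mathbb{1}\mathbb{1}^T+(M_{in}-M_{out})Z_0$ together with the sum constraint to reduce this to $(M_{in}-M_{out})(\lambda-\langle Z_0,\hat{Z}\rangle)$, and then bound $e(\hat{Z})\le 4\sum_{ij}(\hat{Z}_{ij}-Z_{0ij})^2\le 8(\lambda-\langle Z_0,\hat{Z}\rangle)$. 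The only cosmetic difference is that you explicitly identify $\lambda-\langle Z_0,\hat{Z}\rangle=\tfrac{1}{2}\sum_{ij}|\hat{Z}_{ij}-Z_{0ij}|$ and pass through the $\ell_1$ norm, whereas the paper expands the square directly using $\hat{Z}_{ij}^2\le \hat{Z}_{ij}$; the two computations are equivalent.
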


\begin{obs}
    We observe that $\mathcal{C}\subset \text{PSD}_{1}(n)$ so that the supremum in $Err(W_{G})$ with feasible set $\mathcal{C}$ is smaller than with feasible set $\text{PSD}_{1}$ . Then we can use again equation \ref{eq:122} when obtaining the bound for the error. 
\end{obs}

\begin{proof} Similar for what we did on the previous case of $2$ communities using the ground truth vector, we write now $\E[W_{G}]$ in terms of the membership matrix as following
\begin{equation}
\begin{split}
    \E[W_{G}] &= M_{in}Z_{0} + M_{out}(\mathbb{1}\mathbb{1}^{T}-Z_{0}) \\
    &= M_{out}\mathbb{1}\mathbb{1}^{T} + (M_{in}-M_{out})Z_{0}
\end{split}
\end{equation}
and calculate 
\begin{equation}
\begin{split}
    \tr{(\E[W_{G}]Z_{0})} = \langle \E[W_G],Z_{0} \rangle &= M_{out}\langle \mathbb{1}\mathbb{1}^{T},Z_{0} \rangle + (M_{in}-M_{out})\langle Z_{0},Z_{0} \rangle \\
    &= M_{out}\lambda +(M_{in}-M_{out})\lambda\\
    &= M_{in}\lambda
\end{split}
\end{equation}
and for any $Z \in \mathcal{C}$
\begin{equation}
\begin{split}
    \tr{(\E[W_{G}]Z)} = \langle \E[W_G], Z\rangle &= M_{out}\langle \mathbb{1}\mathbb{1}^{T}, Z \rangle + (M_{in}-M_{out})\langle Z_{0},Z\rangle\\
    &= M_{out}\lambda + (M_{in}-M_{out})\langle Z_{0},Z \rangle
\end{split}
\end{equation}
then
\begin{equation}
\begin{split}
    \tr{(\E[W_{G}]Z_{0})}-\tr{(\E[W_{G}]Z)} &= M_{in}\lambda - M_{out}\lambda - (M_{in}-M_{out})\langle Z_{0},Z \rangle \\
    &= (M_{in}-M_{out})(\lambda - \langle Z_{0},Z \rangle).
\end{split}  
\end{equation}
Defining
\begin{equation}
    \delta := \frac{\tr{(\E[W_{G}]Z_{0})}-\tr{(\E[W_G]Z)}}{\lambda(M_{in}-M_{out})}
\end{equation}
we obtain
\begin{equation}
    \lambda - \langle Z_{0}, Z \rangle = \delta \lambda.
\end{equation}
Let us define an estimator for $Z_{0}$ as
\begin{equation}
    \hat{Z} \in \argmax_{Z \in \mathcal{C}}\langle W_{G}, Z \rangle,
\end{equation}
in that case, we obtain
\begin{equation}
    \tr{(W_{G}Z_{0})}-\tr{(W_{G}\hat{Z})} \leq 0 \Rightarrow \tr{(\E[W_{G}]Z_{0})} - \tr{(\E[W_{G}]\hat{Z})} \leq 2Err(W_{G})
\end{equation}

Using $Z=\hat{Z}$ in the definition of $\delta$, by the previous calculations we obtain
\begin{equation}
    1 - \frac{\langle Z_{0},\hat{Z} \rangle}{\lambda} \leq \frac{2Err(W_{G})}{\lambda(M_{in}-M_{out})}
\end{equation}
We argue that if
\begin{equation}
    1 - \frac{\langle Z_{0},\hat{Z} \rangle}{\lambda} = 0
\end{equation}
for $\hat{Z} \in \mathcal{C}$, then we can recover the communities exactly. We have the following
\begin{equation}
    1 - \frac{\langle Z_{0}, \hat{Z} \rangle}{\lambda} = 0 \Rightarrow \langle Z_{0}, \hat{Z} \rangle = \lambda \Rightarrow \sum_{i,j=1}^{n}Z_{0ij}\hat{Z}_{ij} = \lambda \Rightarrow \sum_{i \sim j}\hat{Z}_{ij} = \lambda
\end{equation}
also
\begin{equation}
    \hat{Z} \in \mathcal{C} \Rightarrow \sum_{i,j=1}^{n}\hat{Z}_{ij} = \lambda \Rightarrow \sum_{i \sim j}\hat{Z}_{ij} + \sum_{i \not\sim j}\hat{Z}_{ij} = \lambda \Rightarrow \sum_{i \not\sim j}\hat{Z}_{ij} = 0
\end{equation}
where the last implication follows from to the previous equation. Then, by the condition $\hat{Z}_{ij}\geq 0 \, \forall i,j$ we obtain $\hat{Z}_{ij}=0$ for all $i\not\sim j$. It follows that
\begin{equation}
    \sum_{i j}\hat{Z}_{ij} = \sum_{i \sim j}\hat{Z}_{ij} = \lambda.
\end{equation}
where we  have $\lambda$ terms in the second sum and by the definition of $\mathcal{C}$, $0 \leq \hat{Z}_{ij} \leq 1$. Therefore, all of them are exactly $1$ and then $\hat{Z}=Z_{0}$ as we wished. In fact, if one of them were not $1$, for example, $\hat{Z}_{i^{*}j^{*}} = c <1$ then we would have
\begin{equation}
    \sum_{i \sim j}\hat{Z}_{ij} \leq (\lambda -1) + c < \lambda 
\end{equation}
a contradiction. 

Now, we show that if $1 - \frac{\langle Z_{0},\hat{Z} \rangle}{\lambda}$ is small, we can still recover the communities well such that a good bound on the $Err(W_{G})$ would be sufficient for the estimation. For this purpose, we want to find a procedure on how to estimate the communities given the entries of $\hat{Z}$ and then a error measure in the estimation such that the term $1 - \frac{\langle Z_{0},\hat{Z} \rangle}{\lambda}$ or something proportional to it such as $\lambda - \langle Z_{0}, \hat{Z} \rangle$ appear.

We know that the entries of matrices in $\mathcal{C}$ are in $[0,1]$, then it is natural to choose the estimation procedure as a rounding $r(\hat{Z})$.
 
 We show that $e(\hat{Z})\leq 8(\lambda - \langle Z_{0}, \hat{Z} \rangle)$ as we wanted. In fact, 

\underline{if $\hat{Z}_{ij}>\frac{1}{2}$}:
\begin{equation}
\begin{split}
    (r(\hat{Z})_{ij}-Z_{0ij})^{2} = (1-Z_{0ij})^{2} &= \begin{cases}
        0, \, \text{if} \, Z_{0ij}=1\\
        1, \, \text{if} \, Z_{0ij}=0
    \end{cases}\\
    (\hat{Z}_{ij}-Z_{0ij})^{2} &= \begin{cases}
        <1/4, \, \text{if} \, Z_{0ij}=1\\
        > 1/4, \, \text{if} \, Z_{0ij}=0
    \end{cases}
\end{split}
\end{equation}

\underline{if $\hat{Z}_{ij}\leq \frac{1}{2}$}:
\begin{equation}
\begin{split}
    (r(\hat{Z})_{ij}-Z_{0ij})^{2} = (0-Z_{0ij})^{2} &= \begin{cases}
        1, \, \text{if} \, Z_{0ij}=1\\
        0, \, \text{if} \, Z_{0ij}=0
    \end{cases}\\
    (\hat{Z}_{ij}-Z_{0ij})^{2} &= \begin{cases}
        \geq 1/4, \, \text{if} \, Z_{0ij}=1\\
        \leq 1/4, \, \text{if} \, Z_{0ij}=0
    \end{cases}
\end{split}
\end{equation}

In both cases we obtain
\begin{equation}
\begin{split}
    e(\hat{Z}) =      \sum_{ij}|r(\hat{Z})_{ij} - Z_{0ij}| =   \sum_{ij}(r(\hat{Z})_{ij}-Z_{0ij})^{2}&\leq 4\sum_{ij}(\hat{Z}_{ij}-Z_{0ij})^{2}\\
    &\leq 4 \left(\sum_{ij}\hat{Z}_{ij}^{2} - 2\langle Z_{0},\hat{Z}\rangle + \sum_{ij}Z_{0ij}\right)\\
    &\leq 4 \left(\sum_{ij}\hat{Z}_{ij} - 2\langle Z_{0},\hat{Z}\rangle + \lambda\right)\\
    & \leq 4 \left(2\lambda - 2\langle Z_{0},\hat{Z}\rangle\right)\\
    &= 8(\lambda - \langle Z_{0}, \hat{Z} \rangle)
\end{split}
\end{equation}
as we wanted. 
\end{proof}

Summarizing the previous result we obtain the following theorem.
\begin{teo} \label{Theorem5}
If $\hat{Z}$ is the previous estimator, then by \ref{eq:122} we have asymptotically with high probability
\begin{equation}
    e(\hat{Z}) \leq \frac{32n\sqrt{w_{+}}}{M_{in}-M_{out}} + o(\sqrt{w_{+}}).
\end{equation}
obtaining again a partial estimation of the communities.
\end{teo}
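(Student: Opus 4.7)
The plan is to combine the deterministic estimation bound from the preceding proposition with the probabilistic control on $Err(W_{G})$ that was essentially already assembled for Theorem \ref{Theorem3}. Concretely, the preceding proposition gives us, deterministically,
\begin{equation}
    e(\hat{Z}) \leq \frac{16\, Err(W_{G})}{M_{in}-M_{out}},
\end{equation}
so the only thing left is to argue that $Err(W_{G}) \leq 2n\sqrt{w_{+}} + o(\sqrt{w_{+}})$ with high probability, after which substituting yields the claimed $32n\sqrt{w_{+}}/(M_{in}-M_{out}) + o(\sqrt{w_{+}})$.

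To bound $Err(W_{G})$, I would first observe that every $X \in \mathcal{C}$ satisfies $X \succeq 0$ and $X_{ii} = 1$, so $\mathcal{C} \subset \mathrm{PSD}_{1}(n)$ (this is noted in the remark right after the proposition). Consequently
\begin{equation}
    Err(W_{G}) \;=\; \sup_{X \in \mathcal{C}}|\tr((W_{G}-\E[W_{G}])X)| \;\leq\; \text{SDP}(W_{G}-\E[W_{G}]),
\end{equation}
and the same bound applies to $\text{SDP}(\E[W_{G}]-W_{G})$, so the absolute value is harmless. Then I would invoke the universality scheme (\ref{eq:scheme}) from Section \ref{sec:semidef} with the choice of parameters $(\beta,\epsilon,\delta,k)$ that was already made in the proof of Theorem \ref{Theorem2}, yielding
\begin{equation}
    \text{SDP}(W_{G}-\E[W_{G}]) \;\leq\; \text{SDP}(\sqrt{w_{+}}B) \;+\; n\lambda_{0}, \qquad n\lambda_{0} = o(\sqrt{w_{+}}),
\end{equation}
with probability tending to $1$. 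Finally, the BBAP-type tail bound (\ref{GOEtheorem}) with $\lambda = 0$, applied for an arbitrary $\delta > 0$, gives
\begin{equation}
    \text{SDP}(\sqrt{w_{+}}B) \;\leq\; 2n(1+\delta)\sqrt{w_{+}}
\end{equation}
with high probability; combining the two yields $Err(W_{G}) \leq 2n\sqrt{w_{+}} + o(\sqrt{w_{+}})$ by taking $\delta \to 0$ after $n \to \infty$ (which is exactly the estimate already recorded in equation (\ref{eq:122})).

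Substituting this bound into the deterministic inequality from the proposition produces
\begin{equation}
    e(\hat{Z}) \;\leq\; \frac{16 \cdot (2n\sqrt{w_{+}} + o(\sqrt{w_{+}}))}{M_{in}-M_{out}} \;=\; \frac{32 n\sqrt{w_{+}}}{M_{in}-M_{out}} + o(\sqrt{w_{+}}),
\end{equation}
which is exactly the stated conclusion. Because $w_{+} = o(n)$ by Assumption \ref{assum:3}, the bound $e(\hat{Z}) = o(n \cdot n)$ is nontrivial against the trivial worst case $e(\hat{Z}) \leq n^{2}$, giving the claimed partial recovery.

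There is no real obstacle here: the proof is essentially a substitution, since both the deterministic estimation inequality $e(\hat{Z}) \leq 16\,Err(W_{G})/(M_{in}-M_{out})$ and the high-probability bound on $Err(W_{G})$ have been proved just above. The only minor point that needs care is verifying that the same choice of $(\beta,\epsilon,\delta,k)$ used for $\Lambda_{0}$ in Theorem \ref{Theorem2} works here (it does, since we are dealing with $\bar{W}_{G}$ regardless of which hypothesis is true: the centering matrix used in $Err$ is the true $\E[W_{G}]$ under the $K$-community model, and the universality bound depends only on the sub-gamma parameters of the centered entries, which are the same).
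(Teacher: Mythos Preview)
Your proposal is correct and matches the paper's approach exactly: the paper presents Theorem \ref{Theorem5} as an immediate summary of the preceding proposition combined with the high-probability bound (\ref{eq:122}), and you have simply spelled out that substitution in full detail. There is nothing to add.
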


\section{Estimating $K$} \label{sec:estimagingK}

We now focus on the main problem we are interested in this paper. Building on the previously defined hypothesis tests, we construct a sequential test, inspired by \cite{lei2016_test}, to determine the number of communities.. In contrast to the previous tests, the sequential test here is defined with estimated quantities such that they can actually be done in practice. We illustrate our findings in Section \ref{sec:simulations} with simulations of the test in the case we have zero-inflated Gaussian weights which were defined in \ref{sec:application}.

Let $z\in [K]^n$ be the membership vector of our weighted SBM with $K$ communities and $n$ nodes, that is, the $i$-th entry of $z$ gives the label from $1$ to $K$ of the corresponding community. As we assume the case of balanced communities, we have $|\{i: z_i=a\}|=n/K$, for $a=1,\dots,K$.

For $K_{0}\in\mathbb{N}$, a candidate for the number of communities, and $\varepsilon>0$ we create the hypothesis 
\begin{equation}
\left\lbrace\begin{array}{c}
    H_{0,K_0} : \#\text{communities} = K_0\\
    H_{a,K_0} : \#\text{communities} > K_0
\end{array} \right. 
\end{equation}
and statistical tests $\hat{T}_{n,K_0}(W_{G};\varepsilon)$, where $\hat{T}_{n,K_0}(W_{G};\varepsilon) = 1$ means the hypothesis $H_{0,K_0}$ is rejected and $\hat{T}_{n,K_0}(W_{G};\varepsilon) = 0$ means $H_{0,K_0}$ is not rejected.

The test is performed sequentially for $K_0=1,2,\dots$ until it does not reject $H_{0,K_0}$ (following the idea of \cite{lei2016_test}). The sequential testing estimator is then defined by
\begin{equation}\label{def:hat_k}
    \hat K_n = \min \{ K_0\geq 1 : \hat{T}_{n,K_0}(W_{G};\varepsilon)=0\}\,.
\end{equation}

To prove that $\hat K_n$ is consistent, we need to show that

\begin{equation}\label{eq:consistency}
    \P(\hat K_n = K)\to 1 \text{ as } n\to \infty.
\end{equation}

To prove this, it suffices to show the following two things.
\begin{itemize}
    \item \textbf{non-overestimation:} $ \P(\hat K_n > K)\to 0 \text{ as } n\to \infty$. To show this, observe that
\begin{equation}\label{eq:nonover}
    \begin{split}
         \P(\hat K_n > K) &\leq \P(\text{rejected for all $K_0 \leq K$})\\
          &\leq  \P(\text{rejected for $K_0= K$})\\
          & = \P(\hat{T}_{n,K}(W_{G};\varepsilon)=1) \\
    \end{split}
\end{equation}
    \item \textbf{non-underestimation:} $\P(\hat K_n < K)\to 0 \text{ as } n\to \infty$. To show this, observe that
\begin{equation}\label{eq:nonunder}
    \begin{split}
         \P(\hat K_n < K) &= \P(\text{for some $K_0 \leq K-1$ the test was not rejected})\\
          &= \P(\cup_{K_0=1}^{K-1} \{\hat{T}_{n,K_0}(W_{G};\varepsilon)=0\})\\
          & \leq \sum\limits_{K_0=1}^{K-1}\P(\hat{T}_{n,K_0}(W_{G};\varepsilon)=0) \\
    \end{split}
\end{equation}
\end{itemize}

Let $\hat{z}_{K_{0}} \in [K_0]^n$ be any consistent estimator of the communities in the case we have $K_{0}$ of them, that is, we have $\P(\hat{z}_{K_0}=z)\xrightarrow{n\to \infty} 1$ where the equality is up to permutations. For $K_0\in\{1,2,\dots\}$, let $\hat{n}_{K_{0},in}=|\{(i,j): 1\leq i,j \leq n, \, \hat{z}_{K_{0}i}=\hat{z}_{K_{0}j}\}|$ be the set of pairs of nodes estimated to be in the same community and $\hat{n}_{K_{0},out}=|\{(i,j): 1\leq i,j \leq n, \,\hat{z}_{K_{0}i}\neq\hat z_{K_{0}j}\}|$ to be the pair of nodes estimated to be in different communities. Define

\begin{equation}
    \hat{M}_{K_{0},in} \coloneqq \frac{1}{\hat{n}_{K_{0},in}}\sum\limits_{i,j=1}^n W_{Gij}\mathbb{1}\{\hat z_{K_{0}i} = \hat z_{K_{0}j}\}
\end{equation}
and 
\begin{equation}
    \hat{M}_{K_{0},out} \coloneqq \frac{1}{\hat{n}_{K_{0},out}}\sum\limits_{i,j=1}^n W_{Gij}\mathbb{1}\{\hat z_{K_{0}i} \neq \hat z_{K_{0}j}\}.
\end{equation}
Using these, the $n\times n$ matrix $ \hat M_{K_0}$ of the estimated means is constructed as
\begin{equation}
    (\hat M_{K_0})_{ij} =\left\lbrace \begin{array}{cc}
        \hat{M}_{K_{0},in}, & \hat z_{K_{0}i} = \hat z_{K_{0}j} \\
        \hat{M}_{K_{0},out}, & \hat z_{K_{0}i} \neq \hat z_{K_{0}j} 
    \end{array}\right.
\end{equation}

The idea is again to have the hypothesis tests based on the SDP.  With previously defined tests in mind, we define analogously
\begin{equation}
    \hat{T}_{n,K_0}(W_{G};\varepsilon) \coloneqq \mathbb{1}\{ \text{SDP}(W_{G} - \hat{M}_{K_0}) > 2n(1+\varepsilon)\sqrt{w_{+}}\}.
\end{equation}
where $\hat{M}_{K_{0}}$ is the equivalent estimated quantity. 

We begin proving the non-overestimation bound.
\begin{teo}\label{Theorem6}(\textbf{Non-overestimation}). For $\hat{K}_{n}$ given previously we have
\begin{equation}
    \P(\hat K_n > K)\to 0 \text{ as } n\to \infty.
\end{equation}
\end{teo}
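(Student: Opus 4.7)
The plan is to exploit the reduction already given in \eqref{eq:nonover}, namely
\begin{equation*}
\P(\hat K_n > K) \leq \P(\hat T_{n,K}(W_G;\varepsilon)=1) = \P\bigl(\text{SDP}(W_G - \hat M_K) > 2n(1+\varepsilon)\sqrt{w_+}\bigr).
\end{equation*}
Under the truth we have $\E[W_G] = M_K$, so by the sub-additivity of $\text{SDP}$ (which follows from $\langle A+B, X\rangle = \langle A,X\rangle + \langle B,X\rangle$ and maximizing over a common feasible set),
\begin{equation*}
\text{SDP}(W_G - \hat M_K) \leq \text{SDP}(W_G - M_K) + \text{SDP}(M_K - \hat M_K),
\end{equation*}
and it suffices to bound each summand by a strictly smaller quantity than $n(1+\varepsilon)\sqrt{w_+}$ with high probability.

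For the first term, $W_G - M_K = \bar W_G$, so the universality scheme \eqref{eq:scheme} followed by Theorem 5 of \cite{montanari} with $\lambda=0$ applies exactly as in the Type I error analysis of Theorem \ref{Theorem2}. Using the same choices of $\beta, \epsilon, \delta, k$ as there, we obtain $\text{SDP}(W_G - M_K) \leq \text{SDP}(\sqrt{w_+}B) + n\lambda_0$ with $\lambda_0 = o(\sqrt{w_+})$, and the GOE asymptotics give $\text{SDP}(\sqrt{w_+}B) \leq 2n(1+\varepsilon/2)\sqrt{w_+}$ with probability $1-o(1)$.

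For the second term I would work on the high-probability event $\{\hat z_K = z\}$ supplied by the consistency of $\hat z_K$. On this event $\hat M_K - M_K = a\,Z_0 + b(\mathbb{1}\mathbb{1}^{T} - Z_0)$, where $a := \hat M_{K,in} - M_{in}$ and $b := \hat M_{K,out} - M_{out}$ are empirical averages of $n^2/K$ and $n^2(K-1)/K$ centered sub-gamma entries respectively, each with variance proxy at most $w_+/n$. A standard sub-gamma tail bound (Assumptions \ref{assum:1} and \ref{assum:3}) yields $|a|, |b| = O(\sqrt{w_+/n^3})$ with high probability. Since $\lVert Z_0 \rVert_{\mathrm{op}} = n/K$ and $\lVert \mathbb{1}\mathbb{1}^{T} - Z_0 \rVert_{\mathrm{op}} = n(K-1)/K$, and $K$ is constant, the triangle inequality gives $\lVert \hat M_K - M_K \rVert_{\mathrm{op}} = O(\sqrt{w_+/n})$. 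Combined with the elementary bound $\text{SDP}(M) \leq n \lVert M \rVert_{\mathrm{op}}$ (which follows from $\tr(X) = n$ and $X \succeq 0$ for $X \in \text{PSD}_1(n)$), this delivers $\text{SDP}(M_K - \hat M_K) = O(\sqrt{n\,w_+}) = o(n\sqrt{w_+})$.

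Putting the two bounds together yields $\text{SDP}(W_G - \hat M_K) \leq 2n(1+\varepsilon/2)\sqrt{w_+} + o(n\sqrt{w_+}) < 2n(1+\varepsilon)\sqrt{w_+}$ for $n$ large enough, so the rejection probability under $H_{0,K}$ vanishes. The main obstacle I anticipate is reconciling the \emph{exact} consistency assumption $\P(\hat z_K = z) \to 1$ with the fact that Theorem \ref{Theorem5} only supplies \emph{partial} recovery; if one must work with a merely partially consistent $\hat z_K$, the argument can be salvaged by checking that an $o(n)$ fraction of misclassified vertices perturbs $\hat M_{K,in}$ and $\hat M_{K,out}$ by a lower-order term, which in turn reduces to a uniform sub-gamma tail bound on $\max_{ij}|W_{Gij}|$.
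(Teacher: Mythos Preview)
Your proof is correct and follows essentially the same architecture as the paper's own argument: the same reduction \eqref{eq:nonover}, the same sub-additive split $\text{SDP}(W_G-\hat M_K)\leq\text{SDP}(W_G-\E[W_G])+\text{SDP}(\E[W_G]-\hat M_K)$, the same universality/GOE bound for the first summand, and the same passage to the event $\{\hat z_K=z\}$ for the second. The one technical difference is in how you control $\text{SDP}(\E[W_G]-\hat M_K)$: the paper uses the Grothendieck bound $\text{SDP}(M)\leq K_G\|M\|_{\infty\to 1}\leq K_G\sum_{ij}|M_{ij}|$ and then applies sub-gamma concentration to the two entry types $Y_{in},Y_{out}$, whereas you exploit the low-rank structure $aZ_0+b(\mathbb{1}\mathbb{1}^T-Z_0)$ and bound via $\text{SDP}(M)\leq n\|M\|_{\mathrm{op}}$. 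Both routes give the same $O(\sqrt{nw_+}\,\mathrm{polylog})=o(n\sqrt{w_+})$ order; your operator-norm argument is arguably tidier because the error matrix has rank at most $K$, so nothing is lost by passing to the spectral norm. Your closing remark about the tension between the assumed exact consistency of $\hat z_K$ and the partial recovery guaranteed by Theorem~\ref{Theorem5} is apt; the paper simply \emph{assumes} a consistent estimator exists, so this is a feature of the framework rather than a gap in your proof.
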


\begin{proof}
By equation \ref{eq:nonover}, we only need to understand what happens when $K_{0}=K$. In this case, we denote $\hat{z}_{K} \in [K]^n$ a consistent estimator of $z$ and we also define the following.
\begin{equation}
\begin{split}
    M_{K,in} \coloneqq \frac{1}{n_{K,in}}\sum\limits_{i,j=1}^n W_{Gij}\mathbb{1}\{z_{i} = z_{j}\} \xrightarrow{n \to \infty} M_{in}\\
    M_{K,out} \coloneqq \frac{1}{n_{K,out}}\sum\limits_{i,j=1}^n W_{Gij}\mathbb{1}\{z_{i} \neq z_{j}\}\xrightarrow{n \to \infty} M_{out}.    
\end{split}
\end{equation}
where the limits come from the law of large numbers. We construct similarly
\begin{equation}
    (M_{K})_{ij} =\left\lbrace \begin{array}{cc}
        M_{K,in}, & z_{i} = z_{j} \\
        M_{K,out}, & z_{i} \neq z_{j}. 
    \end{array}\right.
\end{equation}

Then we obtain 
\begin{equation}
    \begin{split}
        \P(\hat{T}_{n,K}(W_{G},\varepsilon)=1) &= \P(\text{SDP}(W_{G} - \hat{M}_{K}) > 2n(1+\varepsilon)\sqrt{w_{+}})\\
        & \leq \P(\text{SDP}(W_{G}- \E[W_{G}])+ \text{SDP}( \E[W_{G}] - \hat{M}_{K}) > 2n(1+\varepsilon)\sqrt{w_{+}})\\
        &\leq \P(\text{SDP}(W_{G}- \E[W_{G}])+ K_{G} \lVert \hat{M}_{K}- \E[W_{G}]\rVert_{\infty \to 1}> 2n(1+\varepsilon)\sqrt{w_{+}}) \\
        &\leq \P(\text{SDP}(W_{G}- \E[W_{G}])+ K_{G} \lVert M_{K}- \E[W_{G}]\rVert_{\infty \to 1}> 2n(1+\varepsilon)\sqrt{w_{+}}) + \P(M_{K}\neq \hat{M}_{K})\\
        &\leq \P(\text{SDP}(W_{G}- \E[W_{G}])+ K_{G} \lVert M_{K}- \E[W_{G}]\rVert_{\infty \to 1}> 2n(1+\varepsilon)\sqrt{w_{+}}) + o(1)
    \end{split}
\end{equation}
where in the third line we used \ref{eq:154} of Appendix \hyperref[sec:appA]{A} below, and in the last line we used the fact that, if we define the event $G^{(n)}\coloneqq \{z^{(n)}=\hat{z}_{K}^{(n)}\}$, we obtain $G \subset \{M_{K} =\hat{M}_{K}\}$ and then $ \P(M_{K}\neq \hat{M}_{K}) \leq \P(G^{(n)^{c}})=o(1)$ by the consistency of $\hat{z}_{K}$.

Now, observe that the entries of $M_{K}-\E[W_{G}]$ are of two possible types
\begin{equation}
    (M_{K}-\E[W_{G}])_{ij} =\left\lbrace \begin{array}{cc}
       Y_{in}\coloneqq \frac{1}{n_{in}}\sum_{ij}(W_{Gij}-M_{in})\mathbb{1}\{z_{i}=z_{j}\}, & z_{i} = z_{j} \\
        Y_{out}\coloneqq \frac{1}{n_{out}}\sum_{ij}(W_{Gij}-M_{out})\mathbb{1}\{z_{i}\neq z_{j}\}, & z_{i} \neq z_{j}
    \end{array}\right.
\end{equation}
such that 
\begin{equation}\label{eq:125}
    \sum_{ij}|(M_{K}-\E[W_G])_{ij}| \leq n^{2}\max\{|Y_{in}|,|Y_{out}|\}
\end{equation}
and this fact is going to be used in the calculation below.

Continuing, using the fact that for any matrix $M\in\mathbb{R}^{n\times n}$ we have $\lVert M\rVert_{\infty \to 1}\leq \sum_{ij}|M_{ij}|$ we obtain
\begin{equation}
    \begin{split}
        \P(\hat{T}_{n,K}(W_{G},\varepsilon)=1) &\leq \P(\text{SDP}(W_{G}- \E[W_{G}])+ K_{G} \lVert M_{K}- \E[W_{G}]\rVert_{\infty \to 1}> 2n(1+\varepsilon)\sqrt{w_{+}}) + o(1)\\
        &\leq \P\left(\text{SDP}(W_{G}- \E[W_{G}])+ K_{G} \sum_{1\leq i \leq j \leq n}|\hat{M}_{Kij}-\E[W_{Gij}]|> 2n(1+\varepsilon)\sqrt{w_{+}}\right)+o(1) \\
        &\leq \P\left(\text{SDP}(W_{G}- \E[W_{G}])+ K_{G}n^{2}|Y_{in}|> 2n(1+\varepsilon)\sqrt{w_{+}}\right)\\
        &+\P\left(\text{SDP}(W_{G}- \E[W_{G}])+ K_{G}n^{2}|Y_{out}|> 2n(1+\varepsilon)\sqrt{w_{+}}\right)+o(1)
    \end{split}
\end{equation}
Now, remembering we are considering $W_{Gij}$ to be sub-gamma random variables when centered for all $1\leq i,j \leq n$, we can use again item (b) from Proposition $5.1$ in \cite{zhang2020concentration} in the last line below and get
\begin{equation}
\begin{split}
\P(|Y_{in}|>\varepsilon_{in}) =\P(|M_{K,in} - M_{in}|> \varepsilon_{in}) &= \P\left(|n_{K,in}M_{K,in} - n_{K,in}M_{in}|> n_{K,in}\varepsilon_{in}\right)\\
&= \P\left(\left|\sum_{i \sim j}(W_{Gij}-\E[W_{Gij}])\right|> n_{K,in}\varepsilon_{in}\right)\\
&\leq 2\exp{\left(\frac{-n_{K,in}^{2}\varepsilon_{in}^{2}}{C[\sum_{i \sim j, i\leq j}\nu_{ij} + n_{K,in}\varepsilon_{in}\max_{i\sim j,i\leq j}c_{ij}]}\right)} 
\end{split}
\end{equation}
where $C$ is come constant related to the fact we have independence just up to symmetry. We have similarly for $|M_{K,out}-M_{out}|$.
By already done calculations, we know that we can have $\P(|Y_{in}|>\varepsilon_{in})\leq \delta_{in}$ (same for \textit{out}) if we choose
\begin{equation}
    \varepsilon_{in} \coloneqq \frac{C}{n_{K,in}}\sqrt{\sum_{\substack{i \sim j \\ i \leq j}}\nu_{ij}\log{\left(\frac{1}{\delta_{in}}\right)}} + \frac{C}{n_{K,in}}\max_{\substack{i \sim j \\ i \leq j}}c_{ij}\log{\left(\frac{1}{\delta_{in}}\right)}
\end{equation}
where $C$ is another constant. Using assumption \ref{assum:3} we obtain
\begin{equation}
    \varepsilon_{in}  \lesssim n^{-\frac{3}{2}}\sqrt{w_{+}\log{\left(\frac{1}{\delta_{in}}\right)} }+n^{-2}\theta\sqrt{w_{+}}\log{\left(\frac{1}{\delta_{in}}\right)} 
\end{equation}
because $n_{K,in} \approx n^{2}$ and we do similarly for \textit{out}. Choosing $\delta_{in}=\delta_{out}=\frac{1}{n}$, we finally get by using  the fact given in equation \ref{eq:125}
\begin{equation}
   \begin{split}
       \P(\hat{T}_{n,K}(W_{G},\varepsilon)=1) &\leq \P\left(\text{SDP}(W_{G}- \E[W_{G}])+ K_{G}n^{2}|Y_{in}|> 2n(1+\varepsilon)\sqrt{w_{+}}\right)\\ &+ \P\left(\text{SDP}(W_{G}- \E[W_{G}])+ K_{G}n^{2}|Y_{out}|> 2n(1+\varepsilon)\sqrt{w_{+}}\right)+o(1)\\
       &\leq 2\P\left(\text{SDP}(W_{G}- \E[W_{G}])+ K_{G}n^{2}(n^{-\frac{3}{2}}\log{n} +n^{-2}\theta\log{n} )\sqrt{w_{+}}> 2n(1+\varepsilon)\sqrt{w_{+}}\right)+o(1)\\
       &\leq 2\P(\text{SDP}(\sqrt{w_{+}}B)>2n(1+\varepsilon)\sqrt{w_{+}} - n\lambda -n\lambda_{0})+o(1)=o(1)
   \end{split} 
\end{equation}
where $\lambda \coloneqq K_{G}(n^{-\frac{1}{2}}\log{n} + n^{-1}\theta\log{n})\sqrt{w_{+}}$ is, as $\lambda_{0}$, also of order $o(\sqrt{w_{+}})$ and then the result follows by the same reasoning as of previous sections.

We conclude that 
\begin{equation}
    \begin{split}
         \P(\hat K_n > K) &= \P(\text{rejected for all $K_0 \leq K$})\\
          &\leq  \P(\text{rejected for $K_0= K$})\\
          & \leq \P(\hat{T}_{n,K}(W_{G};\varepsilon)=1) = o(1) \\
    \end{split}
\end{equation}
as we wanted.
\end{proof}

Now, we prove the non-underestimation error.

\begin{teo}\label{Theorem7}(\textbf{Non-underestimation}).  For $\hat{K}_{n}$ defined previously we obtain
\begin{equation}
    \P(\hat K_n < K)\to 0 \text{ as } n\to \infty
\end{equation}
under the assumption that we have
\begin{equation}\label{eq:136}
    M_{in}-M_{out}>\min\left\{\frac{2K^{4}(K+1)^{2}(1+\varepsilon)\sqrt{w_{+}}}{n},\frac{K(K-1)\log(2(K-1))\sqrt{w_{+}}}{n}\right\}.
\end{equation}
\end{teo}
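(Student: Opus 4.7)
By the union bound in (\ref{eq:nonunder}), it suffices to show, for each fixed $K_0\in\{1,\dots,K-1\}$, that $\P(\hat{T}_{n,K_0}(W_G;\varepsilon)=0)\to 0$; equivalently, that $\text{SDP}(W_G-\hat{M}_{K_0})> 2n(1+\varepsilon)\sqrt{w_+}$ with probability tending to one. Decomposing $W_G-\hat{M}_{K_0}=(\E[W_G]-\hat{M}_{K_0})+(W_G-\E[W_G])$ and using the sublinearity of $\text{SDP}$ gives
\begin{equation*}
\text{SDP}(W_G-\hat{M}_{K_0})\;\ge\;\text{SDP}(\E[W_G]-\hat{M}_{K_0})\;-\;\text{SDP}(\E[W_G]-W_G).
\end{equation*}
Since $-(W_G-\E[W_G])$ still has independent centered sub-gamma entries with the same variance proxy, Theorem~\ref{Theorem1} combined with (\ref{GOEtheorem}) bounds the second term by $2n(1+o(1))\sqrt{w_+}$ with high probability. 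The task therefore reduces to showing, for every realisation of the data-dependent pair $(\hat z_{K_0},\hat{M}_{K_0})$, the deterministic lower bound
\begin{equation*}
\text{SDP}(\E[W_G]-\hat{M}_{K_0})\;\ge\; 4n(1+\varepsilon)\sqrt{w_+}\;+\;o(\sqrt{w_+}).
\end{equation*}

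To attack this bound I would exhibit a feasible witness $X\in\text{PSD}_1(n)$ whose value $\langle\E[W_G]-\hat{M}_{K_0},X\rangle$ is large uniformly over every partition of $[n]$ with at most $K_0\le K-1$ classes and every choice of $(\hat{M}_{K_0,in},\hat{M}_{K_0,out})$. The two terms in the $\min$ of (\ref{eq:136}) correspond to two complementary constructions of such a witness. The first mirrors the combinatorial argument of Appendix~\hyperref[sec:appB]{B}: since $\E[W_G]$ is a $K$-block matrix and $\hat{M}_{K_0}$ is constant on at most $K_0$ blocks, the construction that yields $\text{SDP}(M_r-M_s)\ge \frac{2n^2}{r^2 s^2}(M_{in}-M_{out})$ adapts to produce a bound of order $\frac{n^2}{K^2K_0^2}(M_{in}-M_{out})$; specialising to the worst case $K_0=K-1$ and adding the slack caused by the fact that $(\hat{M}_{K_0,in},\hat{M}_{K_0,out})$ are not the ideal in/out means of a balanced $K_0$-partition produces the first threshold, with its combinatorial factor $K^4(K+1)^2$. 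The second construction is a pigeonhole witness: since $K_0<K$, at least one class of $\hat z_{K_0}$ must contain vertices from two distinct true communities, so an explicit rank-one test matrix supported on such a merged pair yields a bound of order $\frac{n^2}{K(K-1)}(M_{in}-M_{out})$; a union bound over the at most $2(K-1)$ possible merged pairs introduces the logarithmic factor $\log(2(K-1))$ in the second threshold. Taking the better of the two bounds explains the $\min$ in (\ref{eq:136}).

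The residual fluctuation of the estimated means $\hat{M}_{K_0,in},\hat{M}_{K_0,out}$ around their conditional expectations is harmless. Conditional on $\hat z_{K_0}$ each is an empirical mean of $\Omega(n^2)$ centered sub-gamma summands, and a Bernstein-type inequality (as in the proof of Theorem~\ref{Theorem6}, together with Assumption~\ref{assum:3}) gives concentration at rate $O(n^{-3/2}\sqrt{w_+\log n})$. This deviation propagates into $\text{SDP}(\E[W_G]-\hat{M}_{K_0})$ through the Grothendieck-type inequality $\text{SDP}(\cdot)\le K_G\|\cdot\|_{\infty\to 1}$ of (\ref{eq:154}) and contributes only $o(n\sqrt{w_+})$, which is absorbed in the target bound. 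The main obstacle is the uniform combinatorial lower bound of the previous paragraph: the partition $\hat z_{K_0}$ is effectively adversarial, and the Appendix~\hyperref[sec:appB]{B} argument uses the balancedness of both partitions in a nontrivial way, so one must either design witnesses that are insensitive to how unbalanced $\hat z_{K_0}$ is, or reduce to a worst-case balanced partition by an averaging/compactness device. Pinning down the explicit constants $K^4(K+1)^2$ and $K(K-1)\log(2(K-1))$ will be the most delicate part of the proof.
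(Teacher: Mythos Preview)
Your high-level decomposition (use sublinearity of $\text{SDP}$, control $\text{SDP}(\E[W_G]-W_G)$ via Theorem~\ref{Theorem1} and the GOE bound, then lower-bound $\text{SDP}(\E[W_G]-\hat M_{K_0})$) is exactly what the paper does. The gap is in your explanation of the two thresholds in (\ref{eq:136}) and, relatedly, in how you handle the data-dependence of $\hat z_{K_0}$.

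The paper does \emph{not} treat the two terms as two alternative witnesses whose minimum suffices. It writes $\hat M_{K_0}=\hat D_{K_0}+\hat C_{K_0}$, where $\hat D_{K_0}$ uses the true means $M_{in},M_{out}$ on the estimated partition and $\hat C_{K_0}$ carries the errors $\hat M_{K_0,in}-M_{in}$, $\hat M_{K_0,out}-M_{out}$. It then splits into three cases according to whether these errors are below or above a threshold $\epsilon_0$. When both are small, Appendix~\hyperref[sec:appB]{B} controls $\text{SDP}(M_K-\hat D_{K_0})$ and a direct calculation bounds $\text{SDP}(\hat C_{K_0})$; this is where the $K^4(K+1)^2$ factor arises after optimising $\epsilon_0$ and taking the worst $K_0=K-1$. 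When the errors are large, the paper exploits the almost-sure orderings $M_{in}\ge \hat M_{K_0,in}$ and $\hat M_{K_0,out}\ge M_{out}$ to build block witnesses $\check Z,Z'$ directly for $\text{SDP}(M_K-\hat M_{K_0})$.

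The $\log(2(K-1))$ factor has nothing to do with a union bound over ``$2(K-1)$ merged pairs''. It appears because, to prove $M_{in}\ge \hat M_{K_0,in}$ with high probability, one must control the centred sum $\sum_{ij}(W_{Gij}-\E W_{Gij})\mathbb{1}\{\hat z_{K_0,i}=\hat z_{K_0,j}\}$ \emph{uniformly over all} $\hat z_{K_0}\in\{1,\dots,K_0\}^n$. The union bound over these $K_0^n$ labellings forces $\delta=(2K_0)^{-n}$ in the sub-gamma tail, giving a deviation of order $K_0\log(2K_0)\sqrt{w_+}/n$; with $K_0=K-1$ this is precisely the second threshold. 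Your paragraph saying the fluctuation is ``harmless'' with rate $O(n^{-3/2}\sqrt{w_+\log n})$ conditions on $\hat z_{K_0}$ as if it were fixed, which is illegitimate here: $\hat z_{K_0}$ is data-dependent and, unlike in Theorem~\ref{Theorem6}, there is no consistent target partition to fall back on when $K_0<K$. This uniform concentration step is the actual content behind the second term, and it feeds the case analysis rather than providing an independent witness.
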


\begin{proof}
We need to prove that
    \begin{equation}
    \begin{split}
         \P(\hat K_n < K) &= \P(\text{for some $K_0 \leq K-1$ the test was not rejected})\\
          &= \P(\cup_{K_0=1}^{K-1} \{\hat{T}_{n,K_0}(W_{G};\varepsilon)=0\})\\
          & \leq \sum\limits_{K_0=1}^{K-1}\P(\hat{T}_{n,K_0}(W_{G};\varepsilon)=0) = o(1)\\
    \end{split}
\end{equation}
Consider any $K_{0}<K$. Observe that
\begin{equation} \label{eq:139}
    \begin{split}
        \P(\hat{T}_{n,K_0}(W_{G};\varepsilon)=0) &= \P(\text{SDP}(W_{G}-\hat{M}_{K_0})\leq 2n(1+\varepsilon)\sqrt{w_{+}})\\
        &= \P(\text{SDP}(W_{G} -M_{K}+M_{K} -\hat{M}_{K_0})\leq
        2n(1+\varepsilon)\sqrt{w_{+}})\\
        &\leq \P(\text{SDP}(M_{K} -\hat{M}_{K_0}) -\text{SDP}(M_{K}-W_{G})\leq
        2n(1+\varepsilon)\sqrt{w_{+}})
    \end{split}
\end{equation}
The problem now becomes to find \textbf{a lower bound for} $\text{SDP}(M_{K} -\hat{M}_{K_0})$.

We can write
\begin{equation}
    \begin{split}
        M_{K} &= M_{out}\mathbb{1}\mathbb{1}^{T} + (M_{in}-M_{out})B_{K}\\
        \hat{M}_{K_{0}} &= \hat{M}_{K_{0},out}\mathbb{1}\mathbb{1}^{T} + (\hat{M}_{K_{0},in}-\hat{M}_{K_{0},out})\hat{B}_{K_{0}}
    \end{split}
\end{equation}
where $B_{K}$ and $B_{K_{0}}$ are membership matrices. We can rewrite
\begin{equation}
    \begin{split}
        \hat{M}_{K_{0}} &= (\hat{M}_{K_{0},out}-M_{out}+M_{out})\mathbb{1}\mathbb{1}^{T} + (\hat{M}_{K_{0},in}-M_{in}+M_{in}-\hat{M}_{K_{0},out}+M_{out}-M_{out})\hat{B}_{K_{0}}\\
        &= M_{out}\mathbb{1}\mathbb{1}^{T} + (M_{in}-M_{out})\hat{B}_{K_{0}} + (\hat{M}_{K_{0},out}-M_{out})\mathbb{1}\mathbb{1}^{T}+ (\hat{M}_{K_{0},in}-M_{in}-(\hat{M}_{K_{0},out}-M_{out}))\hat{B}_{K_{0}}\\
        &=: \hat{D}_{K_{0}} + \hat{C}_{K_{0}}
    \end{split}
\end{equation}
where
\begin{equation}
    \begin{split}
    \hat{D}_{K_{0}}&\coloneqq M_{out}\mathbb{1}\mathbb{1}^{T} + (M_{in}-M_{out})\hat{B}_{K_{0}}\\
    \hat{C}_{K_{0}}&\coloneqq (\hat{M}_{K_{0},out}-M_{out})\mathbb{1}\mathbb{1}^{T}+(\hat{M}_{K_{0},in}-M_{in}-(\hat{M}_{K_{0},out}-M_{out}))\hat{B}_{K_{0}}.
    \end{split}
\end{equation}
We obtain then
\begin{equation}
\begin{split}
    \text{SDP}(M_{K}-\hat{M}_{K_{0}}) &= \text{SDP}(M_{K}-\hat{D}_{K_{0}}-\hat{C}_{K_{0}})\\
    &\geq \text{SDP}(M_{K}-\hat{D}_{K_{0}}) - \text{SDP}(\hat{C}_{K_{0}})
\end{split}
\end{equation}
and for the first term we have the lower bound of appendix \hyperref[sec:appB]{B}. 

The goal becomes \textbf{to find an upper bound for} $\text{SDP}(\hat{C}_{K_{0}})$. Let $\epsilon_{0} > 0$, we divide this task into cases.

\underline{\textbf{Case 1:} $|\hat{M}_{K_{0},out} - M_{out}|<\epsilon_{0}$ and $|\hat{M}_{K_{0},in}-M_{in}|<\epsilon_{0}$.} 
In this case we have
\begin{equation}
    \begin{split}
        \text{SDP}(\hat{C}_{K_{0}}) &= \text{SDP}((\hat{M}_{K_{0},out}-M_{out})\mathbb{1}\mathbb{1}^{T}+(\hat{M}_{K_{0},in}-M_{in}-(\hat{M}_{K_{0},out}-M_{out}))\hat{B}_{K_{0}})\\
        &\leq \text{SDP}((\hat{M}_{K_{0},out}-M_{out})\mathbb{1}\mathbb{1}^{T}) + \text{SDP}((\hat{M}_{K_{0},in}-M_{in}-(\hat{M}_{K_{0},out}-M_{out}))\hat{B}_{K_{0}})\\
        & \leq \epsilon_{0} \text{SDP}(\mathbb{1}\mathbb{1}^{T}) + 2\epsilon_{0} \text{SDP}(\hat{B}_{K_{0}})\\
        &= \epsilon_{0} n^{2} + 2\epsilon_{0} \frac{n^{2}}{K_{0}}
    \end{split}
\end{equation}
where in the last inequality we calculated the SDP's exactly considering the number of ones in the matrices.

Then, using that we already know from appendix \ref{sec:appB} the result
\begin{equation}
    \text{SDP}(M_{K}-\hat{D}_{K_{0}}) \geq \frac{2n^{2}(M_{in}-M_{out})}{K^{2}K_{0}^{2}}
\end{equation}
we want to choose $\epsilon_{0}$ such that
\begin{equation}
    \begin{split}
        \text{SDP}(M_{K}-\hat{D}_{K_{0}}) - \text{SDP}(\hat{C}_{K_{0}}) \geq \frac{2n^{2}(M_{in}-M_{out})}{K^{2}K_{0}^{2}} -\epsilon_{0} n^{2} -2\epsilon_{0} \frac{n^{2}}{K_{0}} \geq cn^{2}
    \end{split}
\end{equation}
with $c>0$ to have a positive lower bound for $\text{SDP}(M_{K} -\hat{M}_{K_0})$ of order $O(n^{2})$.

Therefore, we need to have
\begin{equation}\label{eq:138}
\begin{split}
    &\frac{2(M_{in}-M_{out})}{K^{2}K_{0}^{2}} - \epsilon_{0} - \frac{2\epsilon_{0}}{K_{0}} > 0 \\
    &\epsilon_{0} < \frac{2(M_{in}-M_{out})}{K^{2}K_{0}(K_{0}+2)}
\end{split}
\end{equation}

\underline{\textbf{Case 2:} $|\hat{M}_{K_{0},in}-M_{in}|>\epsilon_{0}$ only or both $>\epsilon_{0}$.} First of all, we can show that we have asymptotically with high probability 
\begin{equation}
M_{in}\geq\hat{M}_{K_{0},in} \quad\text{and}\quad\hat{M}_{K_{0},out}\geq M_{out}
\end{equation}
Intuitively this is justified when we think that, for the estimator with $K_{0}<K$, when we average out $\frac{n^{2}}{K_{0}}$ entries in the definition of $\hat{M}_{K_{0},in}$, as $\frac{n^{2}}{K_{0}}>\frac{n^{2}}{K}$ we are also averaging entries coming from the distribution with mean $M_{out}<M_{in}$ such that, together with concentration holding, the average decreases.

In fact, writing 
\begin{equation}
    \hat{M}_{K_{0},in} = \frac{1}{\hat{n}_{K_{0},in}}\sum_{i,j=1}^{n}(W_{Gij}-\E W_{Gij})\mathbb{1}\{\hat{z}_{K_{0},i}=\hat{z}_{K_{0},j}\} + \frac{1}{\hat{n}_{K_{0},in}}\sum_{i,j=1}^{n}\E W_{Gij}\mathbb{1}\{\hat{z}_{K_{0},i}=\hat{z}_{K_{0},j}\}
\end{equation}
we can bound the second term by
\begin{equation}
\begin{split}
    \frac{1}{\hat{n}_{K_{0},in}}\sum_{i,j=1}^{n}\E W_{Gij}\mathbb{1}\{\hat{z}_{K_{0},i}=\hat{z}_{K_{0},j}\} &\leq \frac{K_{0}}{n^{2}}\left[\frac{n^{2}}{K}M_{in}+\left(\frac{n^{2}}{K_{0}}-\frac{n^{2}}{K}\right)M_{out}\right]\\
    &= \frac{K_{0}}{K}M_{in} + \left(1-\frac{K_{0}}{K}\right)M_{out}
\end{split}
\end{equation}
where we used that $\frac{n^{2}}{K_{0}}>\frac{n^{2}}{K}$ and $\frac{n^{2}}{K}$ is the true number of $M_{in}$ entries.

For the first term we can use sub-gamma concentration in the following sense. We know that for any $t>0$
\begin{equation}
\begin{split}
    &\P\left(\exists \hat{z}_{K_{0}}:\left|\sum_{i,j=1}^{n}(W_{Gij}-\E W_{Gij})\mathbb{1}\{\hat{z}_{K_{0},i}=\hat{z}_{K_{0,j}}\}\right|> t\hat{n}_{K_{0},in}\right)\\
    & \leq\sum_{\hat{z}\in\{1,...,K_{0}\}^{n}}\P\left(\left|\sum_{i,j=1}^{n}(W_{Gij}-\E W_{Gij})\mathbb{1}\{\hat{z}_{K_{0},i}=\hat{z}_{K_{0,j}}\}\right|> t\hat{n}_{K_{0},in}\right)\\
    &\leq 2\exp{\left(\frac{-t^2\hat{n}_{K_{0},in}/2}{\sum_{ij}\nu_{ij}+ct\hat{n}_{K_{0},in}}\right)}\leq \delta
\end{split}
\end{equation}
if we take
\begin{equation}
    \begin{split}
        t &\coloneqq \frac{1}{\hat{n}_{K_{0},in}}\sqrt{2\sum_{ij}\nu_{ij}\log{\left(\frac{1}{\delta}\right)}} + \frac{1}{\hat{n}_{K_{0},in}}c\log{\left(\frac{1}{\delta}\right)}\\
        &\leq \frac{1}{\hat{n}_{K_{0},in}}\sqrt{2nw_{+}\log{\left(\frac{1}{\delta}\right)}} + \frac{1}{\hat{n}_{K_{0},in}}\theta\sqrt{w_{+}}\log{\left(\frac{1}{\delta}\right)}\\
        &\leq K_{0}[n^{-\frac{3}{2}}\sqrt{w_{+}\log{(1/\delta)}}+n^{-2}\theta\sqrt{w_{+}}\log(1/\delta)]
    \end{split}
\end{equation}
where we used Assumption \ref{assum:3}. Choosing $\delta = (2K_{0})^{-n}$ we obtain
\begin{equation}
    t \leq K_{0}\frac{\sqrt{w_{+}}}{n}\log{(2K_{0})}
\end{equation}
such that
\begin{equation}
\begin{split}
    &\P\left(\exists \hat{z}_{K_{0}}: \frac{1}{\hat{n}_{K_{0},in}}\left|\sum_{i,j=1}^{n}(W_{Gij}-\E W_{Gij})\mathbb{1}\{\hat{z}_{K_{0},i}=\hat{z}_{K_{0,j}}\}\right|> \frac{C_{0}\sqrt{w_{+}}}{n}\right)\\
    &\leq \sum_{\hat{z}\in\{1,...,K_{0}\}^{n}}\P\left( \frac{1}{\hat{n}_{K_{0},in}}\left|\sum_{i,j=1}^{n}(W_{Gij}-\E W_{Gij})\mathbb{1}\{\hat{z}_{K_{0},i}=\hat{z}_{K_{0,j}}\}\right|> \frac{C_{0}\sqrt{w_{+}}}{n}\right)\leq 2^{-n+1}.
\end{split}
\end{equation}

Then, asymptotically with high probability we obtain
\begin{equation}
\begin{split}
    &\hat{M}_{K_{0},in} \leq \frac{C_{0}\sqrt{w_{+}}}{n} + \frac{K_{0}}{K}M_{in} + \left(1-\frac{K_{0}}{K}\right)M_{out}\\
    \Rightarrow &\hat{M}_{K_{0},in}-M_{in} \leq \left(\frac{K_{0}}{K}-1\right)M_{in} + \left(1-\frac{K_{0}}{K}\right)M_{out} + \frac{C_{0}\sqrt{w_{+}}}{n}\\
    \Rightarrow &M_{in}-\hat{M}_{K_{0},in} \geq \left(1-\frac{K_{0}}{K}\right)(M_{in}-M_{out})-\frac{C_{0}\sqrt{w_{+}}}{n}
\end{split}
\end{equation}
and $M_{in}\geq \hat{M}_{K_{0},in}$ if the following condition holds
\begin{equation}
    \left(1-\frac{K_{0}}{K}\right)(M_{in}-M_{out}) > \frac{C_{0}\sqrt{w_{+}}}{n}.
\end{equation}
where $C_{0}=K_{0}\log{(2K_{0})}$. Considering the worst case of $K_{0}=K-1$ we need the condition
\begin{equation}
    M_{in} - M_{out} > \frac{K(K-1)\sqrt{w_{+}}\log(2(K-1))}{n}.
\end{equation}

Similarly, we have $\hat{M}_{K_{0},out}>M_{out}$ because asymptotically with high probability
\begin{equation}
    \hat{M}_{K_{0},out} = \frac{K_{0}}{n^{2}(K_{0}-1)}\langle W_{G}, \mathbb{1}\mathbb{1}^{T}-\hat{B}_{K_{0}}\rangle \geq \frac{K_{0}}{n^{2}(K_{0}-1)}\left[\frac{(K_{0}-1)n^{2}}{K_{0}}(M_{out}+t)\right] = M_{out} + t > M_{out}.
\end{equation}

In this case, we calculate directly $\text{SDP}(M_{K}-\hat{M}_{K_{0}})$ (without using $\hat{C}_{K_{0}}$ and $\hat{D}_{K_{0}}$). We can choose $\check{Z} \in$ PSD$_{1}$ to get a lower bound on this SDP as follows
\[
  \setlength{\arraycolsep}{0pt}
  \setlength{\delimitershortfall}{0pt}
  \check{Z} =\begin{pmatrix}
    \,\resizebox{!}{1em}{\fbox{$1$}} &  &  & \,  \\
    \, & \resizebox{!}{1em}{\fbox{$1$}} &  & 0\, \\
    \,0&  & \ddots & \, \\
    \, &  &  & \resizebox{!}{1em}{\fbox{$1$}}\, \\
  \end{pmatrix}
\]
where we have $K$ blocks of size $\frac{n^{2}}{K^{2}}$. Then, a lower bound not reached by any possible configuration $\hat{M}^{*}_{K_{0}}$ (because they can not have that block structure) is reached when we put the entries $M_{in}-\hat{M}_{K_{0},in}$ of any possible configuration in the same position as the $1$'s blocks above. This is a lower bound because it is the minimum bound in the bigger set where we consider the entries of $M_{K}-\hat{M}_{K_{0}}$ ``free" from the block structure of each one separately.

\begin{obs}
    We have enough $\hat{M}_{K_{0},in}$, we have $\frac{n^{2}}{K_{0}}>\frac{n^{2}}{K}$ of them.
\end{obs}

In this case, the lower bound becomes
\begin{equation}
    \text{SDP}(M_{k}-\hat{M}_{K_{0}}) \geq \frac{n^{2}}{K}(M_{in}-\hat{M}_{K_{0},in}) > \frac{n^{2}}{K}\epsilon_{0}
\end{equation}
\underline{\textbf{Case 3:} $|\hat{M}_{K_{0},out}-M_{out}|>\epsilon_{0}$ and $|\hat{M}_{K_{0},in}-M_{in}|<\epsilon_{0}$.} In this case, we choose $Z'\in \text{PSD}_{1}$ as the matrix $\check{Z}$ but with also subdiagonals blocks with $-1$'s. Again, we put a non-reachable configuration where the entries $\hat{M}_{K_{0},in}$ are in the $+1$'s blocks and in the blocks $-1$'s it does not matter whether we put $\hat{M}_{K_{0},in}$ or $\hat{M}_{K_{0},out}$ because in these cases we get entries $-(M_{out}-\hat{M}_{K_{0},in})$ and $-(M_{out}-\hat{M}_{K_{0},out})$ both greater than or equal to $\hat{M}_{K_{0},out} - M_{out} > \epsilon_{0}$.

Then we obtain
\begin{equation}
\begin{split}
    \text{SDP}(M_{K}-\hat{M}_{K_{0}}) &\geq \langle M_{K} - \hat{M}_{K_{0}},Z' \rangle\\
    &>\frac{n^{2}}{K}(M_{in}-\hat{M}_{K_{0},in}) +  \floor[\bigg]{\frac{K}{2}}\frac{n^{2}}{K}(\hat{M}_{K_{0},out}-M_{out})\\
    &> \floor[\bigg]{\frac{K}{2}}\frac{n^{2}}{K}\epsilon_{0}
\end{split}
\end{equation}
because $M_{in}-M_{K_{0},in}>0$ as proved in the last case. 

Choosing $\epsilon_{0} \coloneqq \frac{2(M_{in}-M_{out})}{K^{2}(K_{0}+2)^{2}} $ we obtain lower bounds for each case, respectively, as
\begin{equation}
    \begin{split}
        &\frac{8(M_{in}-M_{out})n^{2}}{K^{2}K_{0}^{2}(K_{0}+2)^{2}}, \, \frac{2(M_{in}-M_{out})n^{2}}{K^{3}(K_{0}+2)^{2}}, \, \frac{2\left\lfloor \frac{K}{2} \right\rfloor(M_{in}-M_{out})n^{2}}{K^{3}(K_{0}+2)^{2}}
    \end{split}
\end{equation}
then, choosing the following lower bound for them
\begin{equation}
     \frac{2(M_{in}-M_{out})n^{2}}{K^{4}(K_{0}+2)^{2}}
\end{equation}
Considering the worst case of $K_{0}=K-1$ we define
\begin{equation}
    \gamma \coloneqq \frac{2(M_{in}-M_{out})n^{2}}{K^{4}(K+1)^{2}}
\end{equation}

Thus, we need the condition
\begin{equation}
\begin{split}
    &\gamma > 4n(1+\varepsilon)\sqrt{w_{+}}\\
    &M_{in}-M_{out}> \frac{2K^{4}(K+1)^{2}(1+\varepsilon)\sqrt{w_{+}}}{n}
\end{split}
\end{equation}
that is, the condition of the theorem statement.

Finally, continuing the calculation of \ref{eq:139} 
\begin{equation} 
    \begin{split}
        \P(\hat{T}_{n,K_0}(W_{G};\varepsilon)=0) &= \P(\text{SDP}(W_{G}-\hat{M}_{K_0})\leq 2n(1+\varepsilon)\sqrt{w_{+}})\\
        &= \P(\text{SDP}(W_{G} -M_{K}+M_{K} -\hat{M}_{K_0})\leq
        2n(1+\varepsilon)\sqrt{w_{+}}\\
        &\leq \P(\text{SDP}(M_{K} -\hat{M}_{K_0}) -\text{SDP}(M_{K}-W_{G})\leq
        2n(1+\varepsilon)\sqrt{w_{+}})\\
        &\leq \P(\gamma -\text{SDP}(M_{K}-W_{G}) \leq 2n(1+\varepsilon)\sqrt{w_{+}})+o(1)\\
        &\leq \P(\text{SDP}(\sqrt{w_{+}}B) > 2n(1+\varepsilon)\sqrt{w_{+}}-n\lambda_{0}) + o(1) = o(1)
    \end{split}
\end{equation}
where again in the last line we used the same reasoning as done in section \ref{sec:communitydet}.

Then we obtain
 \begin{equation}
         \P(\hat K_n < K) \leq \sum\limits_{K_0=1}^{K-1}\P(\hat{T}_{n,K_0}(W_{G};\varepsilon)=0) = o(1)
\end{equation}
\end{proof}

\section{Example: Zero-Inflated Gaussian model}\label{sec:application}

We work here with the interesting case of a zero-inflated Gaussian distribution, that is, a Gaussian times an independent Bernoulli distribution. For the weighted Stochastic Block Model we consider the following distributions
\begin{equation}\label{eq:model_sparse_gaussian}
    W_{G{ij}} \sim \begin{cases}
        Ber(\frac{a_n}{n})\mathcal{N}(\mu_{in},\tau_{1}^{2}),  \quad \text{if  $i \sim j$}\\
        Ber(\frac{b_n}{n})\mathcal{N}(\mu_{out},\tau_{2}^{2}), \quad \text{if  $i \not\sim j$}
    \end{cases}
\end{equation}
where $a_n,b_n\xrightarrow{n\to \infty}\infty$ but both are bounded by $Cn$ for some constant $C$ (we will see below why this is necessary). In other words, we are considering the more difficult case of a sparse random graph but with average degree going to infinity with $n$, similar to what was considered in \cite{montanari}.

First, we observe that 
\begin{equation}
    \E[W_{G{ij}}] \sim \begin{cases}
        \frac{a_n}{n}\mu_{in},  \quad \text{if  $i \sim j$}\\
        \frac{b_n}{n}\mu_{out}, \quad \text{if  $i \not\sim j$}
    \end{cases}
\end{equation}
such that to obey \textbf{Assumption \ref{assum:2}} we need to impose the following equation 
\begin{equation}
    a_n\mu_{in}> b_n\mu_{out}.
\end{equation}

Now, we check that both distribution for $W_{Gij}$, when centered, are sub-gamma distributions. Let us calculate for the case $i\sim j$, the other case is done similarly. We define the log moment-generating function for a centered random variable $\bar{X} \coloneqq X- \E X$ as
\begin{equation}
    \psi_{\bar{X}}(\lambda) = \log{\E[e^{\lambda\bar{X}}]} \quad \lambda\in \mathbb{R}.
\end{equation}
A two-sided sub-gamma distribution is characterized by (see section 2.4 of \cite{boucheron2013concentration})
\begin{equation}
    \psi_{\bar{X}}(\lambda) \leq \frac{\lambda^{2}\nu}{2(1-c|\lambda|)} \quad \forall\lambda \;\text{with} \; 0<|\lambda|<\frac{1}{c}
\end{equation}
where $\nu>0$ is called the \textbf{variance factor} and $c>0$ is the \textbf{scale parameter} and in this case we write $\bar{X} \sim sub\Gamma(\nu,c)$.

For the case where $i\sim j$ we are considering a random variable $X = BN$ where $B\sim Ber(\frac{a_{n}}{n})$ and $N \sim \mathcal{N}(\mu_{in},\tau_{1}^{2})$ and $B\perp N$. We calculate below the moment generating function of $\bar{X}$ and take the $\log$ in the end to find the correct parameters $\nu,c$ for this situation. Let $X'$ be a random variable with the same distribution as $X$ and $X'\perp X$, we calculate
\begin{equation}
\begin{split}
    \E[e^{\lambda(X-\E[X])}] &=\E[e^{\lambda\E(X-X'|X)}] \leq \E[\E[e^{\lambda(X-X')}|X]] = \E[e^{\lambda(X-X')}]\\
    &=\E[e^{2\lambda\left(\frac{1}{2}X-\frac{1}{2}X'\right)}] \leq \frac{1}{2}\E[e^{2\lambda X}+e^{-2\lambda X'}]
\end{split}
\end{equation}
where in the first inequality we used the conditional Jensen's inequality. Continuing
\begin{equation}
\begin{split}
    \E[e^{\lambda(X-\E[X])}] &\leq \frac{1}{2}\left[\sum_{j\geq 0}\frac{(2\lambda)^{j}}{j!}\E[X^{j}] + \frac{(-1)^{j}(2\lambda)^{j}}{j!}\E[X^{j}]\right] = \sum_{\substack{j\geq 0\\ j \, \text{even}}} \frac{(2\lambda)^{j}}{j!}\E[(BN)^{j}]\\
    &= \sum_{\substack{j\geq 0\\ j \, \text{even}}} \frac{(2\lambda)^{j}}{j!}\E[N^{j}]\E[B^{j}] = \frac{a_n}{n} \sum_{\substack{j\geq 0\\ j \, \text{even}}} \frac{(2\lambda)^{j}}{j!}\E[(\mu_{in}+\tau_{1}Z)^{j}]\\
    &\leq \frac{a_n}{n} \sum_{\substack{j\geq 0\\ j \, \text{even}}} \frac{(2\lambda)^{j}}{j!}\E[(|\mu_{in}|+\tau_{1}Z)^{j}] = \frac{a_n}{n} \sum_{\substack{j\geq 0\\ j \, \text{even}}} \frac{(2\lambda)^{j}}{j!}\left[\sum_{k=0}^{j} \binom{j}{k}|\mu_{in}|^{j-k}\tau_{1}^{k}\E[Z^{k}]\right]
\end{split}
\end{equation}
where $Z\sim \mathcal{N}(0,1)$ and we used the fact that $\E[B^{j}]=\frac{a_{n}}{n}$ for all $j \in \mathbb{N}$. Using that
\begin{equation}
    \E[Z^{k}] \sim \begin{cases}
        0,  \quad \text{if $k$ odd}\\
        \frac{k!}{
    \frac{k}{2}!2^{\frac{k}{2}}
}, \quad \text{if $k$ even}
    \end{cases}
\end{equation}
and observing that this is increasing in $k$ we obtain
\begin{equation}
\begin{split}
    \E[e^{\lambda(X-\E[X])}] &\leq \frac{a_n}{n} \sum_{\substack{j\geq 0\\ j \, \text{even}}} \frac{(2\lambda)^{j}}{j!}\left[\sum_{k=0}^{j} \binom{j}{k}|\mu_{in}|^{j-k}\tau_{1}^{k}\right]\frac{j!}{
    \frac{j}{2}!2^{\frac{j}{2}}}\\
    &=\frac{a_n}{n}\sum_{\substack{j\geq 0\\ j \, \text{even}}} \frac{\left(\frac{2}{\sqrt{2}}\lambda\right)^{j}}{\frac{j}{2}!}(|\mu_{in}| + \tau_{1})^{j}\\
    &\leq \frac{a_n}{n}\sum_{\substack{j\geq 0\\ j \, \text{even}}}\left(\frac{2}{\sqrt{2}}\lambda\right)^{j}(|\mu_{in}|+\tau_{1})^{j}\\
    &= \frac{a_n}{n} + \frac{a_n}{n}\sum_{\substack{j\geq 2\\ j \, \text{even}}}\left(\frac{2}{\sqrt{2}}\lambda\right)^{j}(|\mu_{in}|+\tau_{1})^{j}\\
    &= \frac{a_n}{n} + \frac{a_n}{n}\left(\frac{2}{\sqrt{2}}\lambda\right)^{2}(|\mu_{in}|+\tau_{1})^{2}\sum_{\substack{j\geq 0\\ j \, \text{even}}}\left(\frac{2}{\sqrt{2}}\lambda\right)^{j}(|\mu_{in}|+\tau_{1})^{j}\\
    &\leq 1 + \left(\frac{2}{\sqrt{2}}\lambda\right)^{2}(|\mu_{in}|+\tau_{1})^{2}\frac{\frac{a_n}{n}}{1-\left(\frac{2}{\sqrt{2}}\lambda\right)(|\mu_{in}|+\tau_{1})}\\
    &= 1 + \frac{4\lambda^{2}\frac{a_n}{n}(|\mu_{in}|+\tau_{1})^{2}}{2(1-\sqrt{2}(|\mu_{in}|+\tau_{1})\lambda)}
\end{split}
\end{equation}
where we supposed that
\begin{equation}
    \left(\frac{2}{\sqrt{2}}\lambda\right)(|\mu_{in}|+\tau_{1}) < 1 \iff \lambda < \frac{1}{\sqrt{2}(|\mu_{in}|+\tau_{1})}
\end{equation}
or the same substituting $\lambda$ by $|\lambda|$ such that bounding $\lambda$ by its absolute value, in the last line on the previous equation we end up with $|\lambda|$ in the denominator.

Finally, taking the logarithm we obtain
\begin{equation}
    \psi_{\bar{X}}(\lambda) \leq \frac{4\lambda^{2}\frac{a_n}{n}(|\mu_{in}|+\tau_{1})^{2}}{2(1-\sqrt{2}(|\mu_{in}|+\tau_1)\lambda)}
\end{equation}
such that in our case the \textbf{variance factor} is $\nu_{in} \coloneqq 4\frac{a_n}{n}(|\mu_{in}|+\tau_{1})^{2} $ and the \textbf{scale parameter} is $c_{in} = \sqrt{2}(|\mu_{in}|+\tau_{1})$.

Given the above we can also check \textbf{Assumption \ref{assum:1}} for $p=2$, that is, $\V[X]\leq \nu_{in}$. In fact, using the general inequality for independent random variables $X_{1}$ and $X_{2}$
\begin{equation}
    \V[X_{1}X_{2}] = \V[X_{1}]\V[X_{2}]+\V[X_{1}]\E[X_{2}^{2}] + \V[X_{2}]\E[X_{1}^{2}]
\end{equation}
we obtain
\begin{equation}
\begin{split}
    \V[X] = \V[BN] &= \frac{a_n}{n}\left(1-\frac{a_n}{n}\right)\tau_{1}^{2} + \frac{a_{n}}{n}\left(1-\frac{a_n}{n}\right)(\mu_{in}^{2}+\tau_{1}^{2}) + \tau_{1}^{2}\left[\frac{a_n}{n}\left(1-\frac{a_n}{n}\right)+ \frac{a_{n}^{2}}{n^{2}}\right]\\
    &= \frac{a_n}{n}\left(1-\frac{a_n}{n}\right)\left[\mu_{in}^{2}+3\tau_{1}^{2}\right] + \frac{a_{n}^{2}}{n^{2}}\tau_{1}^{2}\\
    &\leq \frac{a_n}{n}(\mu_{in}^{2}+4\tau_{1}^{2}) \leq 4\frac{a_n}{n}(|\mu_{in}|+\tau_{1})^{2} = \nu_{in}
\end{split}
\end{equation}
as we wanted.

We also check \textbf{Assumption \ref{assum:1}} for $p\geq 3$, that is
\begin{equation}
\E[|\bar{W}_{Gij}|^{p}] \leq \frac{c_{ij}^{p-2}\nu_{ij}p!}{2}.
\end{equation}
Let us write the right-hand side of the condition (RHS) with the $\nu_{ij}$ and $c_{ij}$ of this case. Again, we write considering we have the distribution coming from the inside of communities edges, with the ``out" case having the same bound. We have
\begin{equation}\label{eq:176}
\begin{split}
    \frac{c_{ij}^{p-2}\nu_{ij}p!}{2} &= \frac{[\sqrt{2}(|\mu_{in}|+\tau_{1})]^{p-2}4(\frac{a_n}{n})(|\mu_{in}|+\tau_{1})^2p!}{2}\\
    &= \frac{2^{\frac{p}{2}-1}4}{2}\frac{a_n}{n}(|\mu_{in}|+\tau_{1})^pp!\\
    &=2^{\frac{p}{2}}\frac{a_n}{n}(|\mu_{in}|+\tau_{1})^pp!.
\end{split}
\end{equation}
We can also calculate the left-hand side (LHS) obtaining
\begin{equation}\label{eq:177}
\begin{split}
    \E[|\bar{W}_{Gij}|^p] &= \E[|BN-\E[BN]|^p]\\
    &\leq 2^p(\E[|BN|^p]+\E[|\E[BN]|^p])\\
    &\leq 2^{p+1}\E[|BN|^p] = 2^{p+1}(\E[|B|^p]\E[|N|^p])\\
    &= 2^{p+1}\frac{a_n}{n}\E[|N|^p]
\end{split}  
\end{equation}
Considering $Z \sim \mathcal{N}(0,1)$, we obtain
\begin{equation}
    \begin{split}
        \E[|N|^p] &= \E[(|\mu_{in}+\tau_{1}Z|^p)]\\
        &\leq \E[(|\mu_{in}|+\tau_{1}|Z|)^p]\\
        &= \sum_{k=0}^{p}\binom{p}{k}|\mu_{in}|^{p-k}\tau_{1}^k\E[|Z|^k]\\
        &\leq \sum_{k=0}^{p}\binom{p}{k}|\mu_{in}|^{p-k}\tau_{1}^k\left[\frac{2^{\frac{p}{2}}}{\sqrt{\pi}}\Gamma\left(\frac{p+1}{2}\right)\right] = (|\mu_{in}|+\tau_{1})^p\left[\frac{2^{\frac{p}{2}}}{\sqrt{\pi}}\Gamma\left(\frac{p+1}{2}\right)\right]
    \end{split}
\end{equation}
where we used the fact that the moments $\E[|Z|^k]$ are increasing such that we maintained only the term $k=p$. We split the calculation in the case where we have $p$ even and odd.

\underline{Case $p$ even:} In this case, $\frac{p}{2}\in \mathbb{N}$ such that
\begin{equation}
\frac{2^{\frac{p}{2}}}{\sqrt{\pi}}\Gamma\left(\frac{p}{2}+\frac{1}{2}\right) = \frac{2^{\frac{p}{2}}}{\sqrt{\pi}}\frac{p!\sqrt{\pi}}{\frac{p}{2}!2^p} = \frac{p!}{\frac{p}{2}!2^{\frac{p}{2}}}
\end{equation}
and equation \ref{eq:177} becomes
\begin{equation}
\begin{split}
    \E[|\bar{W}_{Gij}|^p] &\leq 2^{p+1}\frac{a_n}{n}\frac{p!}{\frac{p}{2}!2^{\frac{p}{2}}}(|\mu_{in}|+\tau_{1})^p.
\end{split}
\end{equation}
Comparing with equation \ref{eq:176}, to have Assumption \ref{assum:1} it suffices that
\begin{equation}
    \frac{2^{p+1}}{\frac{p}{2}!2^{\frac{p}{2}}} \leq 2^{\frac{p}{2}}
\end{equation}
for $p\geq 4$ (remember we are in the case $p$ is even).

For $p=4$ we have
\begin{equation}
    \frac{2^{5}}{2!2^2}=4 \leq 4 = 2^{\frac{4}{2}}
\end{equation}
such that the relation holds. For $p=6$ we also have it holding
\begin{equation}
    \frac{2^7}{3!2^3} = \frac{16}{6} \leq 8 = 2^\frac{6}{2}.
\end{equation}
Now, for $p\geq8$, we have $\frac{p}{2}\geq4$ such that we can use $\frac{p}{2}!>2^{\frac{p}{2}}$ such that
\begin{equation}
    \frac{2^{p+1}}{\frac{p}{2}!2^{\frac{p}{2}}}< \frac{2^{p+1}}{2^{\frac{p}{2}}2^{\frac{p}{2}}} = 2 \leq 2^{\frac{p}{2}}
\end{equation}
and the relation also holds.

\underline{Case $p$ odd:} In this case $\frac{p+1}{2}\in \mathbb{N}$, such that
\begin{equation}
    \Gamma\left(\frac{p+1}{2}\right) = \left(\frac{p-1}{2}\right)!
\end{equation}
and then to have equation \ref{eq:176} holding it suffices to have
\begin{equation}
    2^{p+1}\left(\frac{p-1}{2}\right)! \leq 2^{\frac{p}{2}}p!.
\end{equation}
As $p\geq 3$ and is odd we can rewrite it as $p=2m+1$ with $m\geq1$, then we need
\begin{equation}\label{eq:187}
\begin{split}
    &2^{2m+2}m!\leq 2^{m+\frac{1}{2}}(2m+1)!\\
    \Leftrightarrow &2^{m+\frac{3}{2}} \leq (2m+1)(2m)(2m-1)...(m+2)(m+1) =: \ell   
\end{split}
\end{equation}
Observing that $\ell$ is the product of $m+1$ terms which are $\geq m$, we have for $m\geq 3$
\begin{equation}
    2^{m+\frac{3}{2}}\leq m^{m+1} \leq \ell
\end{equation}
such that condition \ref{eq:187} holds. It remains to check for $m=1,2$. For $m=1$, we calculate and obtain
\begin{equation}
\begin{split}
    &2^{2+2}1! \leq 2^{1+\frac{1}{2}}3!\\
    \Leftrightarrow &2^4 \leq 16.9 \leq 2\sqrt{2}3! 
\end{split}
\end{equation}
that is, the condition holds. We also have it holding
\begin{equation}
    2^{4+2}2! = 128 \leq 678,8 \leq 4\sqrt{2}5!
\end{equation}
and this finishes the checking of Assumption \ref{assum:1}.

\begin{obs}
   We could have first checked Assumption \ref{assum:1}, defined $\nu_{ij}$ and $c_{ij}$ in the same way, and then concluded that the weights are sub-gamma with these parameters via Bernstein's inequality (Section 2.8 of \cite{gabor}). However, we chose our approach of finding the parameters directly because then we avoid the need to guess them.
\end{obs}

We now check that \textbf{Assumption \ref{assum:3}} holds. Denoting the variance factor for the entry $W_{Gij}$ as $\nu_{ij}$ we have that
\begin{equation}
    n\max_{ij} \nu_{ij} = n \max\left\{4\frac{a_n}{n}(|\mu_{in}|+\tau_{1})^{2},4\frac{b_n}{n}(|\mu_{out}|+\tau_{2})^{2}\right\}
\end{equation}
such that choosing 
\begin{equation}\label{eq:w_gaussian_bernoulli}
    w_{+} \coloneqq \max\left\{4a_{n}(|\mu_{in}|+\tau_{1})^{2}, 4b_{n}(|\mu_{out}|+\tau_{2})^{2}\right\}
\end{equation}
satisfies the first equation of Assumption \ref{assum:3} and also $w_{+} \in (4,n\cdot const)$, $w_{+}\xrightarrow{n\to \infty}\infty$ by the conditions on $a_{n},b_{n}$. It remains to check that
\begin{equation}
     \theta \coloneqq \frac{\max_{1\leq i \leq j \leq n}c_{ij}}{\sqrt{w_{+}}} \xrightarrow{n\to \infty} 0,
\end{equation}
in fact, in this case we have
\begin{equation}
    \theta = \frac{\max\{\sqrt{2}(|\mu_{in}|+\tau_{1}), \sqrt{2}(|\mu_{out}|+\tau_{2})\}}{\sqrt{\max\left\{4a_{n}(|\mu_{in}|+\tau_{1})^{2}, 4b_{n}(|\mu_{out}|+\tau_{2})^{2}\right\}}} \xrightarrow{n\to \infty} 0
\end{equation}
because of the conditions $a_n,b_n\xrightarrow{n\to \infty}\infty$ and the fact that the other terms are of order $O(1)$.

To complete the section, we also need to show that the SBM with zero-inflated Gaussian distribution with the above parameters, satisfies the conditions of Theorems \ref{Theorem2},\ref{Theorem4} and \ref{Theorem7}. Actually, we only need to check one of the condition as the others are of the same order. Let us then check that the condition of Theorem \ref{Theorem2}, that is, \textbf{equation \ref{eq:29}} is satisfied. The right-hand side of it gives
\begin{equation}
    \frac{n^{2}}{2}(M_{in}-M_{our})=\frac{n^{2}}{2}\left[\frac{a_n}{n}\mu_{in}-\frac{b_{n}}{n}\mu_{out}\right] = \left(\frac{a_n\mu_{in}-b_n\mu_{out}}{2}\right)n 
\end{equation}
and the left-hand side
\begin{equation}
    4n(1+\delta)\sqrt{\max\left\{4a_{n}(|\mu_{in}|+\tau_{1})^{2}, 4b_{n}(|\mu_{out}|+\tau_{2})^{2}\right\}}
\end{equation}
such that we for $n$ large enough we have the condition satisfied as the RHS is of order $n\max\{{a_{n},b_{n}}\}$ and the left hand side of order $n\sqrt{\max\{{a_{n},b_{n}}\}}$.

\section{Computational simulations of the tests and estimations}\label{sec:simulations}

In this section, we empirically study the performance of the community detection method proposed in this work on a finite sample network. Our goal is to compare the proposed method, based on a SDP solution, with methods grounded in other principles, such as likelihood-based and spectral approaches. To this end, we selected three different methods: (1) spectral clustering (SC), originally proposed for binary networks by \cite{lei2015consistency} and applied directly to the weighted network; (2) the pseudo-likelihood method (Pseudo) for weighted Gaussian networks introduced in \cite{cerqueira2023}; and (3) a discretization-based approach (DB), which discretizes the network weights before applying a spectral clustering algorithm, as proposed by \cite{xu2018optimal}. The discretization level used in the DB approach is set to $\lfloor 0.4 (\log\log n)^4\rfloor$, and for the initialization of the Pseudo method, we used the SC. 

The metric used to compare the community detection methods evaluates the total discrepancy between the estimated and true membership matrices, capturing the alignment accuracy of the estimated communities as defined in \eqref{eq:def_error}. This metric takes values between zero and $n^2$. The error is computed as the average over 100 replications of the model.

To implement the community detection method based on the SDP, we utilized the \texttt{CVXR} package in R to solve the SDP problem in \eqref{def:SDP} \cite{package_cvxr}.

We simulated networks from the zero-inflated Gaussian model, defined in Equation \eqref{eq:model_sparse_gaussian}, with $K=4$, $\mu_{in}=5$, $\mu_{out}=2$, $\tau_{1}^{2}=\tau_{2}^{2}=1$ and $a_n/n=b_n/n=\rho$. In this setting, the parameter $\rho$ controls the sparsity of the network, with smaller values of $\rho$ corresponding to a sparser network. Varying the values of $\rho$ for different network sizes, $n=100,200$, we observe in Figure \ref{fig:error_sparse_gaussian} that, as expected, as $\rho$ increases (i.e., as the network becomes less sparse), the estimation error decreases. Fixing $\rho=0.4$, we observe in Figure \ref{fig:error_sparse_gaussian_min} that the estimation error decreases as the difference $\mu_{in} - \mu_{out}$ increases. It is important to emphasize that the DB method depends on the choice of the discretization level, which can be influenced by both $\rho$ and $\mu_{in} - \mu_{out}$. Selecting the optimal value for this parameter is beyond the scope of this work; therefore, the DB method did not perform well in our analysis, as we used a fixed discretization level for all simulation scenarios.

\begin{figure}[h]
\begin{subfigure}{.5\textwidth}
  \centering
  \includegraphics[scale=0.55]{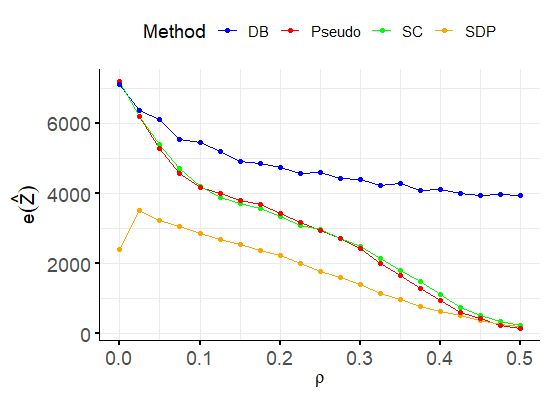}
  \caption{$n=100$.}
\end{subfigure}%
\begin{subfigure}{.5\textwidth}
  \centering
  \includegraphics[scale=0.55]{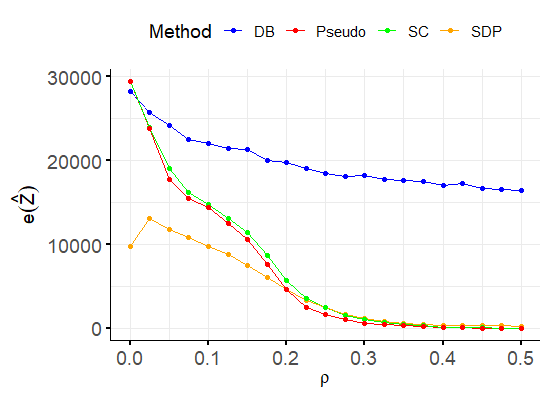}
  \caption{$n=200$.}
\end{subfigure}
\caption{Estimation error computed for different community detection approaches as function of the sparsity parameter of the model $\rho$ with $\mu_{in}-\mu_{out}=3$.}
\label{fig:error_sparse_gaussian}
\end{figure}

\begin{figure}[h]
\begin{subfigure}{.5\textwidth}
  \centering
  \includegraphics[scale=0.55]{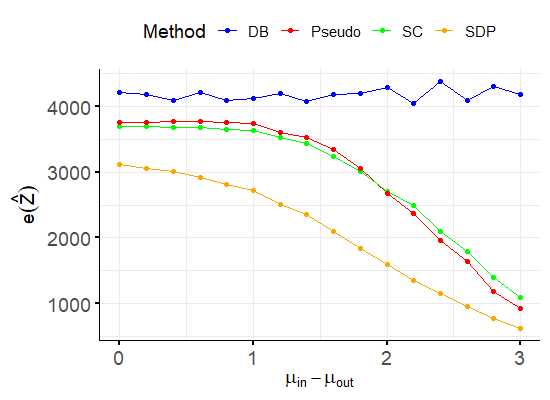}
  \caption{$n=100$.}
\end{subfigure}%
\begin{subfigure}{.5\textwidth}
  \centering
  \includegraphics[scale=0.55]{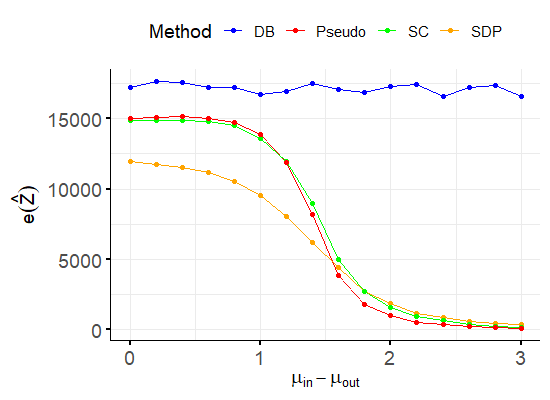}
  \caption{$n=200$.}
\end{subfigure}
\caption{Estimation error computed for different community detection approaches as function of the difference of the mean weights $\mu_{in}-\mu_{out}$ with $\rho=0.4$. }
\label{fig:error_sparse_gaussian_min}
\end{figure}

We also study the performance of the estimator of the number of communities based on the sequential hypothesis test \eqref{def:hat_k}. We replace $w_{+}$ in Equation \eqref{eq:w_gaussian_bernoulli} with its value obtained by plugging in the estimates of $\mu_{in}$, $\mu_{out}$, and $\rho$ from the spectral clustering method. For all simulations we fixed $\tau_{1}^{2}=\tau_{2}^{2}=1$. We observe from Figure \ref{fig:error_khat_sparse_gaussian} that, by fixing the difference of the mean weights $\mu_{in} - \mu_{out} = 4$, the convergence to the true number of communities is faster for $K = 3$ than for $K = 4$. Moreover, as the number of nodes in the network increases, convergence holds for sparser networks. This reinforces the necessary assumption in Theorem \ref{Theorem7}, which states a lower bound for the difference $\mu_{in} - \mu_{out}$, depending on the true number of communities and the sparsity parameter $\rho$ through $w_{+}$. This necessary assumption can also be observed in Figure \ref{fig:error_khat_sparse_gaussian_min}, where the convergence to the true number of communities is faster for $K=3$ with $\rho=0.5$ than for $K=4$ with $\rho=0.8$, as the difference $\mu_{in} - \mu_{out}$ varies.

\begin{figure}[h]
\begin{subfigure}{.5\textwidth}
  \centering
  \includegraphics[scale=0.5]{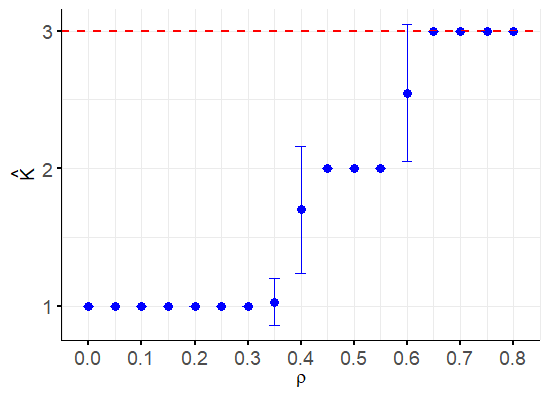}
  \caption{$n=100$.}
\end{subfigure}%
\begin{subfigure}{.5\textwidth}
  \centering
  \includegraphics[scale=0.5]{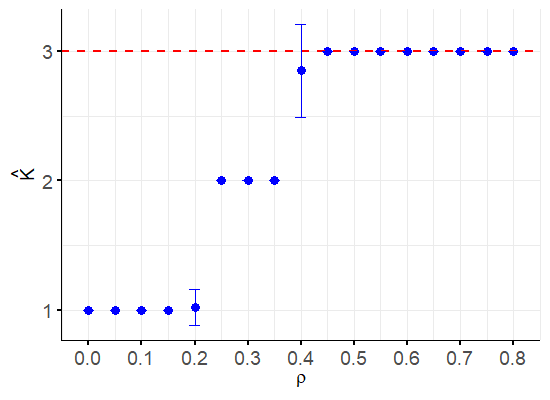}
  \caption{$n=200$.}
\end{subfigure}
\begin{subfigure}{.5\textwidth}
  \centering
  \includegraphics[scale=0.5]{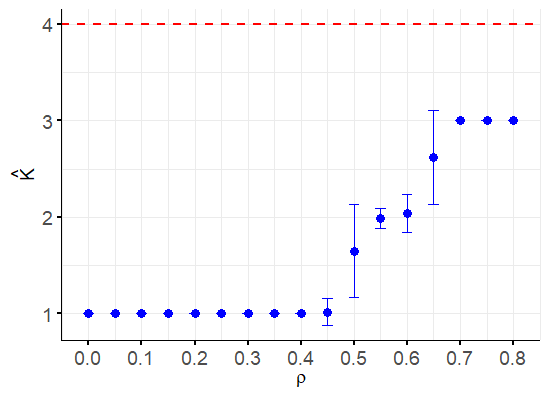}
  \caption{$n=100$.}
\end{subfigure}%
\begin{subfigure}{.5\textwidth}
  \centering
  \includegraphics[scale=0.5]{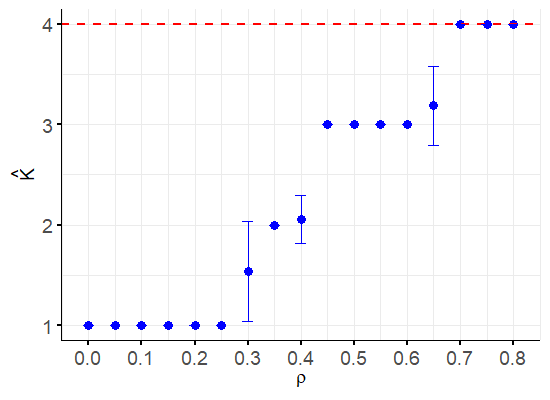}
  \caption{$n=200$.}
\end{subfigure}
\caption{The mean and the one standard deviation error bars for the estimated
number of communities as function of the sparsity parameter for $| \mu_{in} - \mu_{out}| =4$. (A) and (B) model with three communities and (C) and (D) model with four communities. }
\label{fig:error_khat_sparse_gaussian}
\end{figure}

\begin{figure}[h]
\begin{subfigure}{.5\textwidth}
  \centering
  \includegraphics[scale=0.5]{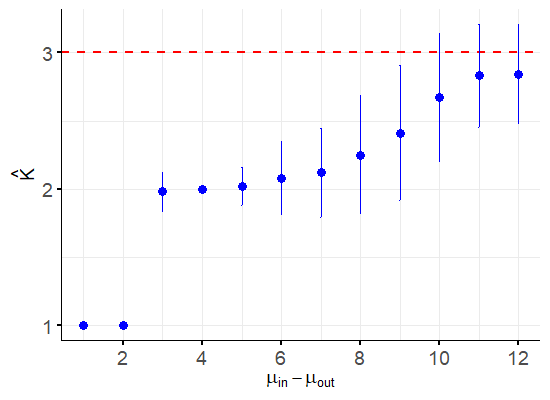}
  \caption{$n=100$.}
\end{subfigure}%
\begin{subfigure}{.5\textwidth}
  \centering
  \includegraphics[scale=0.5]{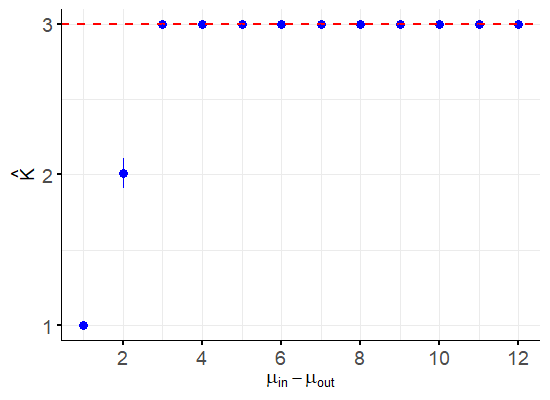}
  \caption{$n=200$.}
\end{subfigure}
\begin{subfigure}{.5\textwidth}
  \centering
  \includegraphics[scale=0.5]{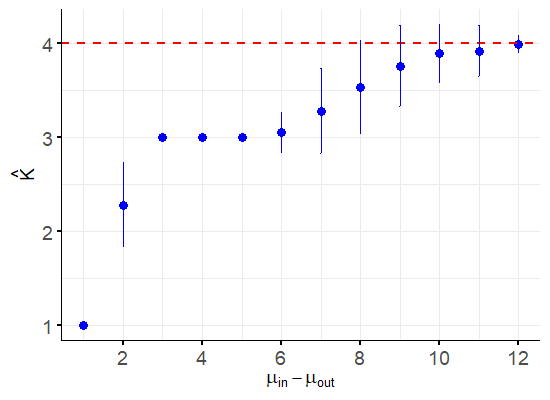}
  \caption{$n=100$.}
\end{subfigure}%
\begin{subfigure}{.5\textwidth}
  \centering
  \includegraphics[scale=0.5]{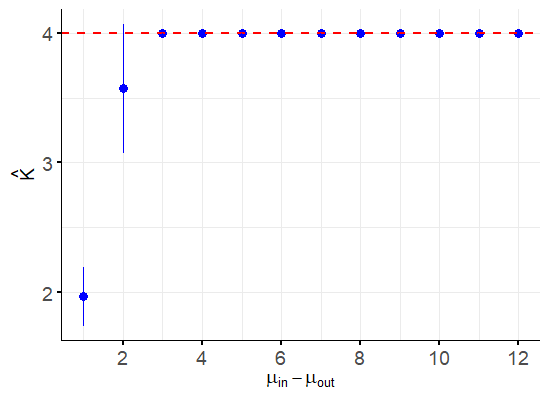}
  \caption{$n=200$.}
\end{subfigure}
\caption{The mean and the one standard deviation error bars for the estimated
number of communities as function of the difference of the mean weights $\mu_{in}-\mu_{out}$ . (A) and (B) model with three communities with $\rho=0.5$ and (C) and (D) model with four communities with $\rho=0.8$. }
\label{fig:error_khat_sparse_gaussian_min}
\end{figure}

\section{Discussion and final considerations}\label{sec:discussion}

In this paper, we explored the use of semidefinite programming in the context of community detection. Specifically, we addressed the problem of distinguishing between any two possible different numbers of communities present in a balanced weighted Stochastic Block Model and the recovery of community memberships. Our work is an extension of the work in \cite{montanari} in the sense that we considered the weighted version of the problem and tests that can distinguish not only between a homogeneous Erdős–Rényi graph and the presence of two communities (as well as generalizations to more than two communities) but also between any two arbitrary numbers of communities. Additionally, the sequential use of the tests that we built to estimate the number of communities follows the same idea as in \cite{lei2016_test}, however, we do not use a statistic coming from a GOE matrix directly but from the SDP function of it. 

As a separate point, the universality result we established for the SDP is more general. It is different from the similar result in \cite{montanari} in the sense that they proved an approximation of the SDP function of a Bernoulli matrix with a specific GOE equivalent matrix. In our case, though, we proved the approximation between a symmetric random matrix with sub-gamma entry distribution with its equivalent Gaussian matrix.

Finally, we illustrated our findings with simulation studies using the zero-inflated Gaussian model. We compared the performance of the proposed community detection estimators with other known approaches in literature, using spectral and likelihood-based methods. We observe that the SDP-based method outperform the other ones for almost all sparsity levels (keeping in mind the fine-tuning needed for the DB method). We also showed that the error decreases as the difference between within-community and between-community weights increases, as expected from Theorem \ref{Theorem5}. For the problem of estimating the number of communities, we also obtained results consistent with Theorems \ref{Theorem6} and \ref{Theorem7}, showing that estimating the true number of communities can be challenging when this number is large, as it requires a greater difference between within-community and between-community weights, and that the challenge also depends on the network size.

While our results are applicable to more general setting than similar previous results, there are some further possible generalizations. For example, we could ask what happens in case the means are not homogeneous inside and outside communities, or what we need to change in our analysis in case the communities are not balanced. We could also ask how we should modify our analysis in the case the number of communities depends on the number of vertices. All these considerations approximates the theoretical model to real-world data and then are desired to be studied and analyzed. We hope the results here inspire new research in this area.

\appendix

\section{Universality of the SDP function} \label{sec:appA}
We prove the following theorem from the first section.

\begin{teo}
Let $X \in \mathbb{R}^{n\times n}$ be a symmetric random matrix with independent centered entries $X_{ij} \sim sub\Gamma(\nu_{ij},c_{ij})$  with variance $\sigma_{ij}^2$ and let $D \in \mathbb{R}^{n \times n}$ be independent of $X$ and the analogous Gaussian matrix, \textit{i.e.}, a symmetric random matrix with independent entries $D_{ij}$ of distribution $\mathcal{N}(0,\sigma_{ij}^{2})$. Then, there exists $C>0$ such that for $8\leq k \leq n$, $\epsilon \in (0,\frac{1}{2})$, and $\beta>0$, with probability at least $1-\max(4^{-n},n^{-\delta})$,  we have
\begin{equation}\label{eq:147}
\begin{split}
    \frac{1}{n}|\text{SDP}(X) - \text{SDP}(D)| &\lesssim \left[\frac{1}{k-1}+\epsilon\right]\left(\sqrt{\sum_{i\leq j}\frac{\nu_{ij}}{n}} + c\right) + \frac{k-1}{2\beta}\log{\left(\frac{C}{\epsilon}\right)}\\
    &+ \sqrt{\sum_{i\leq j}\nu_{ij}n^{\delta-2}} + \frac{\beta^{2}}{n}\sum_{i\leq j}(c_{ij}\nu_{ij}+\nu_{ij}^{\frac{3}{2}})
\end{split}
\end{equation}
where $c \coloneqq \max_{1\leq i \leq j}c_{ij}$ and $\lesssim$ means that the inequality is true up to a multiplicative universal constant.
\end{teo}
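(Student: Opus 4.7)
The plan is to follow the seven-step telescoping decomposition suggested by the diagram in Section~\ref{sec:semidef}, specialized to the pair $(X,D)$ (which share the same variance structure, so the comparison via $\sqrt{w_+}B$ is not needed here). Specifically, I would write
\begin{align*}
\text{SDP}(X) - \text{SDP}(D) &= \bigl[\text{SDP}(X) - \text{OPT}_k(X)\bigr] + \bigl[\text{OPT}_k(X) - \Phi(\beta,k;X)\bigr] \\
&\quad + \bigl[\Phi(\beta,k;X) - \E\Phi(\beta,k;X)\bigr] + \bigl[\E\Phi(\beta,k;X) - \E\Phi(\beta,k;D)\bigr] \\
&\quad + \bigl[\E\Phi(\beta,k;D) - \Phi(\beta,k;D)\bigr] + \bigl[\Phi(\beta,k;D) - \text{OPT}_k(D)\bigr] \\
&\quad + \bigl[\text{OPT}_k(D) - \text{SDP}(D)\bigr],
\end{align*}
and bound each bracket separately, so that the four distinct types of bounds (the $X$ and $D$ sides being parallel) produce respectively the four terms appearing on the right-hand side of \eqref{eq:147}.

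For the rank reduction $|\text{SDP}(M)-\text{OPT}_k(M)|$ I would use a Grothendieck-type rounding argument: express $\text{SDP}(M)$ via Gram matrices of unit vectors and project onto a random $k$-dimensional subspace, so that the loss is of order $n\|M\|_{\mathrm{op}}/(k-1)$. The operator norm of $X$ (and of $D$, which is stochastically dominated in the convex-function sense by Proposition~\ref{prop:1}) is then controlled by a non-asymptotic matrix Bernstein / sub-gamma concentration inequality, yielding $\|X\|_{\mathrm{op}} \lesssim \sqrt{\sum_{i\le j}\nu_{ij}/n} + c$ with probability at least $1-4^{-n}$. This gives the first term in \eqref{eq:147}. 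For the smoothing step $|\text{OPT}_k(M)-\Phi(\beta,k;M)|$ I would write $\Phi$ as the log-partition function of a Gibbs measure on $(\mathbb{S}^{k-1})^n$, compare it to its low-temperature maximum via an $\epsilon$-net of cardinality $(C/\epsilon)^{n(k-1)}$, and use a volumetric entropy bound to obtain a deviation of the form $n\epsilon\|M\|_{\mathrm{op}}/n + n(k-1)\beta^{-1}\log(C/\epsilon)$; this produces the $\epsilon$ and $(k-1)/\beta$ terms. For the concentration step $|\Phi(\beta,k;X)-\E\Phi(\beta,k;X)|$, I would verify that $\Phi$ viewed as a function of the upper-triangular entries of $X$ has bounded differences of sub-gamma type (by convexity and the boundedness of the spins $\sigma_i$), and then apply a sub-gamma deviation bound (e.g.\ Theorem~2.10 in \cite{boucheron2013concentration}) with failure probability $n^{-\delta}$, producing the $\sqrt{\sum_{i\le j}\nu_{ij}n^{\delta-2}}$ term.

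The heart of the argument, and the main obstacle, is the Lindeberg exchange step for $\E\Phi(\beta,k;X)-\E\Phi(\beta,k;D)$. The idea is to fix an ordering of the pairs $(i,j)$ with $i\le j$ and, one pair at a time, replace $X_{ij}$ by $D_{ij}$, controlling the resulting increment via a Taylor expansion of $\Phi$ as a function of $M_{ij}$ to third order. The zeroth-order term cancels, the first- and second-order terms match in expectation because $\E X_{ij}=\E D_{ij}=0$ and $\V(X_{ij})=\V(D_{ij})$ (here one needs $X$ and $D$ independent, as assumed), and one is left with bounding
\[
\frac{1}{6}\,\E\!\left[|\partial_{M_{ij}}^{3}\Phi|\,\bigl(|X_{ij}|^3 + |D_{ij}|^3\bigr)\right].
\]
The third derivative of $\Phi$ with respect to a single matrix entry is bounded by $C\beta^2/n$, uniformly in the other entries (this is the analogue of Lemma~A.3 in \cite{montanari}, and it is where the combinatorial factor $\beta^2/n$ is generated). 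For the moment factor, Assumption~\ref{assum:1} at $p=3$ yields $\E|X_{ij}|^3\le 3c_{ij}\nu_{ij}$, while $D_{ij}\sim\mathcal{N}(0,\sigma_{ij}^2)$ with $\sigma_{ij}^2\le\nu_{ij}$ gives $\E|D_{ij}|^3\lesssim\nu_{ij}^{3/2}$. Summing the $\binom{n+1}{2}$ swap increments produces exactly the remaining term $\frac{\beta^2}{n}\sum_{i\le j}(c_{ij}\nu_{ij}+\nu_{ij}^{3/2})$ in \eqref{eq:147}.

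The delicate point in step four is verifying the uniform third-derivative bound on $\Phi$: $\Phi$ is convex and smooth, but the dependence of its derivatives on $\beta$, $k$, and $M$ must be tracked carefully using the spin representation of $\Phi$ and the fact that the Gibbs average of the product of three spins is bounded by $1$. Once this bound is in hand, the Lindeberg sum assembles cleanly, and combining all seven brackets on the event where both sides of the operator-norm and concentration bounds hold (of probability at least $1-\max(4^{-n},n^{-\delta})$ by a union bound) delivers \eqref{eq:147}.
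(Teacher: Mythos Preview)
Your seven-step telescoping decomposition and the Lindeberg exchange are exactly what the paper does. The genuine gap is in how you control the rank-reduction and smoothing errors: you route them through the \emph{operator norm}, claiming $\|X\|_{\mathrm{op}}\lesssim\sqrt{\sum_{i\le j}\nu_{ij}/n}+c$ with probability $\ge 1-4^{-n}$ via matrix Bernstein. Matrix Bernstein at that failure probability actually yields $\|X\|_{\mathrm{op}}\lesssim\sqrt{n\max_i\sum_j\nu_{ij}}+cn$, which (after the trivial $\text{SDP}(M)\le n\|M\|_{\mathrm{op}}$ and the final $1/n$ division) is off from the target by a factor of order $\sqrt{n}$ even in the homogeneous case, and by more when the $\nu_{ij}$ are heterogeneous. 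The paper sidesteps the operator norm entirely: it bounds $\text{SDP}(M)\le K_G\|M\|_{\infty\to 1}$ via Grothendieck's inequality (Lemma~\ref{lemma:1}) and then controls $\|M\|_{\infty\to 1}=\sup_{x,y\in\{\pm1\}^n}\langle x,My\rangle$ by a union bound over the $4^n$ sign pairs together with \emph{scalar} sub-gamma concentration for $\langle x,My\rangle\sim sub\Gamma\bigl(2\sum_{i\le j}\nu_{ij},2c\bigr)$. This gives $\|M\|_{\infty\to 1}\lesssim\sqrt{n\sum_{i\le j}\nu_{ij}}+cn$, which after the $1/n$ normalization is exactly the first term in \eqref{eq:147}. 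The same cut-norm bound is recycled in the smoothing step (Lemma~\ref{lemma:2}), where the paper uses a PAC-Bayes/spherical-cap argument (Propositions~\ref{prop:norm} and~\ref{prop:random}) rather than a raw $\epsilon$-net; the role of Proposition~\ref{prop:norm} is precisely to show $\bigl|\sum_{ij}M_{ij}\langle\sigma_i,\sigma_j\rangle\bigr|\le C\|M\|_{\infty\to 1}$, keeping the perturbation controlled by the cut norm. This is not cosmetic: as Section~\ref{sec:semidef} stresses, the operator-norm route is exactly what fails in the sparse regime motivating the paper.

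Two smaller corrections. The third derivative $\partial_{M_{ij}}^3\Phi$ is bounded by $6\beta^2$, \emph{not} $C\beta^2/n$: the paper computes it explicitly as a third Gibbs cumulant of $\langle\sigma_i,\sigma_j\rangle$, and each derivative contributes one factor of $\beta$ and a bounded spin correlation but no $1/n$. Your Lindeberg term happens to match \eqref{eq:147} only because you silently omitted the overall $1/n$ from the left-hand side in your accounting. For the concentration step, the paper takes the simpler route of establishing the entrywise $1$-Lipschitz bound $|\Phi(\beta,k;M)-\Phi(\beta,k;M')|\le|M_{ij}-M'_{ij}|$, then Efron--Stein plus Chebyshev; your sub-gamma bounded-differences proposal would need this same Lipschitz input and is more machinery than required for the stated $n^{-\delta}$ failure probability.
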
 

To prove the theorem, we will need to define the following low-rank approximation of SDP.
\begin{defn}Let $M \in \mathbb{R}^{n \times n}$, and $k\in \mathbb{N}$, we define
\begin{equation}
    \text{OPT$_{k}$}(M) := \max{\{\langle M,X \rangle : X \succeq 0,  X_{ii}=1 \forall i \in [n], \text{rank}(X)\leq k\}}.
\end{equation}
\end{defn}

We need also to define the following function known as the ``log-partition function" from statistical mechanics.
\begin{defn} Let $M \in \mathbb{R}^{n \times n}$,$\beta > 0$ and $k \in \mathbb{N}$, we define
\begin{equation}
    \Phi(\beta,k;M) := \frac{1}{\beta} \log{\left(\int\exp\left(\beta \sum_{i,j=1}^{n} M_{ij} \langle \sigma_{i},\sigma_{j}\rangle\right)d\nu(\mathbf{\sigma})\right)}
 \end{equation}
where $\sigma=(\sigma_1, \sigma_2, ..., \sigma_n) \in (\mathbb{S}^{k-1})^{n}$ and $d\nu$ is the uniform and normalized measure on $(\mathbb{S}^{k-1})^{n}$.
\end{defn}
The purpose of introducing this function is to have a smooth approximation of  the OPT$_{k}$ function so that we can use a already known universality result for smooth functions in one of the steps of the proof.

Using the triangle inequality
\begin{equation}\label{eq:54}
    \begin{split}
        |\text{SDP}(D)- \text{SDP}(X)| &\leq |\text{SDP}(D)-\text{OPT}_{k}(D)|\\
        &+|\text{OPT}_{k}(D)-\Phi(\beta,k;D)|\\
          &+ |\Phi(\beta,k;D) - \E[\Phi(\beta,k;D)]|\\
         &+ |\E[\Phi(\beta,k;D)] - \E[\Phi(\beta,k;X)]|\\
         &+ |\E[\Phi(\beta,k;X)]-\Phi(\beta,k;X)|\\
         &+ |\Phi(\beta,k;X) - \text{OPT}_{k}(X)|\\
         &+|\text{OPT}_{k}(X)-\text{SDP}(X)|
    \end{split}
\end{equation}
we apply the following results to obtain the proof of the theorem. 

\begin{obs}
In what follows, we always consider $M\in \{X,D\}$, which is a matrix with sub-gamma entries because $X$ has sub-gamma entries $X_{ij}\sim sub\Gamma(\nu_{ij},c_{ij})$ and $D$ has the corresponding Gaussian entries, which are also sub-gamma but with $c_{ij}=0$ for all entries.
\end{obs}

First, to approximate SDP$(M)$ and OPT$_{k}(M)$ we use the following lemma.

\begin{lema}\label{lemma:1}
With probability at least $1-4^{-n}$, we have
\begin{equation}
    |\text{SDP}(M)-\text{OPT}_{k}(M)| \lesssim \frac{1}{k-1}\left(\sqrt{\sum_{i\leq j}\nu_{ij}n} + cn\right)
\end{equation}
where $c\coloneqq \max_{i\leq j}c_{ij}$
\end{lema}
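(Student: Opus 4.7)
The plan is to decouple the lemma into a deterministic reduction and a probabilistic norm bound. Deterministically, for any symmetric matrix $M$, I aim to prove
\[
\text{SDP}(M) - \text{OPT}_{k}(M) \;\lesssim\; \frac{n}{k-1}\,\|M\|_{\mathrm{op}},
\]
and probabilistically, for $M$ as in the hypothesis, that with probability at least $1-4^{-n}$,
\[
\|M\|_{\mathrm{op}} \;\lesssim\; \sqrt{\tfrac{1}{n}\textstyle\sum_{i\leq j}\nu_{ij}} + c.
\]
Combining the two and using $n\bigl(\sqrt{\sum_{i\leq j}\nu_{ij}/n}+c\bigr) = \sqrt{n\sum_{i\leq j}\nu_{ij}}+cn$ gives the statement.

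For the deterministic step I would mimic the Gaussian-projection argument of Theorem~4 in \cite{montanari}, which never uses the distribution of the entries of $M$. Take an optimizer $X^{*}\in\text{PSD}_{1}(n)$ of $\text{SDP}(M)$, factor $X^{*}=UU^{T}$ with rows $u_i\in\mathbb{S}^{n-1}$, draw a $k\times n$ matrix $G$ with i.i.d.\ $\mathcal{N}(0,1)$ entries, and define $\tilde u_i \coloneqq Gu_i/\|Gu_i\|\in\mathbb{S}^{k-1}$. Then $\tilde X \coloneqq (\langle\tilde u_i,\tilde u_j\rangle)_{ij}$ is feasible for $\text{OPT}_{k}$, and a standard rotation-invariance computation gives
\[
\E\langle\tilde u_i,\tilde u_j\rangle \;=\; \langle u_i,u_j\rangle \;+\; O\!\left(\frac{1-\langle u_i,u_j\rangle^{2}}{k-1}\right).
\]
Summing against $M$, the error is $O(1/(k-1))$ times a linear combination of $\langle M,\mathbb{1}\mathbb{1}^{T}\rangle$ and $\langle M, X^{*}\circ X^{*}\rangle$; both are bounded by $n\|M\|_{\mathrm{op}}$ using PSD/nuclear-norm duality (since $\mathbb{1}\mathbb{1}^{T}$ has rank one with nuclear norm $n$, and the Schur square of an element of $\text{PSD}_{1}(n)$ again lies in $\text{PSD}_{1}(n)$). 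Averaging over $G$ and picking a favorable realization gives the deterministic inequality.

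For the probabilistic step I would obtain the operator-norm bound via a standard $\varepsilon$-net argument combined with sub-gamma concentration. Fix a $\tfrac14$-net $\mathcal{N}_{0}\subset\mathbb{S}^{n-1}$ with $|\mathcal{N}_{0}|\leq 9^{n}$, so that $\|M\|_{\mathrm{op}}\leq 2\max_{u\in\mathcal{N}_{0}}|u^{T}Mu|$. For each $u$, the quadratic form $u^{T}Mu=\sum_{i}u_i^{2}M_{ii}+2\sum_{i<j}u_iu_j M_{ij}$ is a linear combination of the independent $sub\Gamma(\nu_{ij},c_{ij})$ variables $M_{ij}$; by closure of the sub-gamma class (Proposition~5.1 in \cite{zhang2020concentration}), it is itself sub-gamma with variance proxy $V_u \leq 4\sum_{i\leq j}u_i^{2}u_j^{2}\nu_{ij}$ and scale $c'_u\leq 2c$, where the constraint $\sum_{ij}u_i^{2}u_j^{2}=(\sum u_i^{2})^{2}=1$ is used to convert $V_u$ into the right shape. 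Applying the sub-gamma Bernstein bound and taking a union bound over $\mathcal{N}_{0}$ yields the claimed high-probability bound on $\|M\|_{\mathrm{op}}$. The main obstacle is this last step: a crude net-plus-Bernstein analysis tends to pay an $n\log 9$ entropy cost on the scale (Poisson) term, so one must either carry out the $V_u$ bookkeeping carefully enough that the Gaussian regime controls the tail at the critical threshold, or replace the net argument by a sharper Bandeira--van Handel / matrix-Bernstein--type estimate for sub-gamma matrices, in order to land exactly on $\sqrt{\sum_{i\leq j}\nu_{ij}/n}+c$ rather than the weaker $\sqrt{\sum_{i\leq j}\nu_{ij}/n}+cn$. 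The deterministic step, by contrast, is essentially a direct adaptation of Montanari--Sen and is insensitive to the distribution of $M$.
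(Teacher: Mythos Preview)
Your deterministic reduction is fine and indeed close to what the paper does, but the paper bounds the right-hand side in terms of $\|M\|_{\infty\to 1}$ rather than $n\|M\|_{\mathrm{op}}$: it quotes the inequality
\[
|\text{SDP}(M)-\text{OPT}_k(M)|\;\leq\;\frac{1}{k-1}\bigl(\text{SDP}(M)+\text{SDP}(-M)\bigr)
\]
from \cite{sdpgrothendieck} and then uses $\text{SDP}(\pm M)\leq K_G\|M\|_{\infty\to 1}$. Your route via $\|M\|_{\mathrm{op}}$ is a weakening of this, since $\|M\|_{\infty\to 1}\leq n\|M\|_{\mathrm{op}}$.

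The real problem is your probabilistic step: the target bound $\|M\|_{\mathrm{op}}\lesssim \sqrt{\tfrac{1}{n}\sum_{i\leq j}\nu_{ij}}+c$ is \emph{false} in the generality of the lemma, so neither a more careful net argument nor a Bandeira--van~Handel/matrix-Bernstein refinement can rescue it. A concrete counterexample is the centered adjacency matrix of a sparse Erd\H{o}s--R\'enyi graph with $p=d/n$, $d$ fixed: here $\nu_{ij}\leq p$ and $c_{ij}=O(1)$, so your target would give $\|M\|_{\mathrm{op}}=O(1)$, whereas in reality $\|M\|_{\mathrm{op}}\gtrsim\sqrt{\Delta}\asymp\sqrt{\log n/\log\log n}\to\infty$ (driven by the maximum degree $\Delta$). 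The obstacle you flagged on the Poisson term is thus not a technicality but a genuine obstruction.

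The paper sidesteps this entirely by staying with $\|M\|_{\infty\to 1}=\sup_{x,y\in\{-1,+1\}^n}\langle x,My\rangle$. This is a maximum over a \emph{finite} set of size $4^n$, and for each fixed pair $(x,y)$ the linear form $\langle x,My\rangle$ has coefficients in $\{-2,-1,0,1,2\}$, so it is sub-gamma with variance proxy $\lesssim\sum_{i\leq j}\nu_{ij}$ and scale $\lesssim c$. A straight union bound over $4^n$ then yields, with probability at least $1-4^{-n}$,
\[
\|M\|_{\infty\to 1}\;\lesssim\;\sqrt{n\textstyle\sum_{i\leq j}\nu_{ij}}+cn,
\]
with no entropy penalty on the scale term because the coefficients are uniformly $O(1)$ regardless of $(x,y)$. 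Plugging this into the deterministic inequality gives the lemma directly. In short: replace $\|\cdot\|_{\mathrm{op}}$ by $\|\cdot\|_{\infty\to 1}$ and the whole proof becomes a two-line computation.
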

\begin{proof}
Using equation $2$ from \cite{sdpgrothendieck} we obtain
\begin{equation}
    |\text{SDP}(M)-\text{OPT}_{k}(M)| \leq \frac{1}{k-1}(\text{SDP}(M)+\text{SDP}(-M)).
\end{equation}

Now, by fact $3.2$ and equation $3.3$ from \cite{guedon2016community} we also obtain
\begin{equation}\label{eq:154}
    \text{SDP}(M) \leq K_{G}\lVert M \rVert_{\infty \to 1}
\end{equation}
where
\begin{equation}
    \lVert M \rVert_{\infty \to 1} =\sup_{x,y \in \{-1,+1\}^{n}} \langle x,My\rangle.
\end{equation}
Then
\begin{equation}
    |\text{SDP}(M)-\text{OPT}_{k}(M)| \leq \frac{2K_{G}}{k-1}\lVert M \rVert_{\infty \to 1}.
\end{equation}
Given that the entries of $M$ are sub-gamma random variables with parameters $\nu_{ij}$ and $c_{ij}$, this means
\begin{equation}
    \psi_{M_{ij}} \leq \frac{\lambda^{2}\nu_{ij}}{2(1-c_{ij}|\lambda|)} 
\end{equation}
where this is valid for all $0<|\lambda|<\frac{1}{c_{ij}}$ and $\psi$ is the $\log$ of the moment-generating function. Now, let us calculate this function for $\langle x,My\rangle$ for fixed $x,y \in \{-1,+1\}^{n}$. Using the symmetry of $M$ and the independence of its entries we get
\begin{equation}
    \begin{split}
        \psi_{\langle x,My\rangle}(\lambda) &= \psi_{\sum_{i<j}M_{ij}(x_{i}y_{j}+x_{j}y_{i})+\sum_{i}M_{ii}x_{i}y_{i}}(\lambda)\\
        &= \sum_{i<j} \psi_{M_{ij}}(\lambda(x_{i}y_{j}+x_{j}y_{i})) + \sum_{i}\psi_{M_{ii}}(\lambda(x_{i}y_{i}))
    \end{split}
\end{equation}
then using that $x_{i}y_{j}+x_{j}y_{i} \in [-2,2]$, $x_{i}y_{i} \in [-1,1]$ and the definition of $c$ we obtain
\begin{equation}
    \begin{split}
        \psi_{\langle x,My\rangle} \leq \sum_{i<j}\frac{4\lambda^{2}\nu_{ij}}{2(1-c2|\lambda|)} + \sum_{i}\frac{\lambda^{2}\nu_{ii}}{2(1-c|\lambda|)} \leq \frac{2\lambda^{2}\left(\sum_{i\leq j}\nu_{ij}\right)}{1-2c|\lambda|}.
    \end{split}
\end{equation}
Then 
\begin{equation}
    \langle x,My \rangle \sim sub\Gamma\left(2\sum_{i\leq j}\nu_{ij},2c\right)
\end{equation}
for all $x,y \in \{-1,+1\}^{n}$.

Finally, using equation $5.5$ from \cite{zhang2020concentration} we get
\begin{equation}
    \begin{split}
        \P(\lVert M \rVert_{\infty\to 1}\geq t) &= \P\left(\sup_{x,y\in\{-1,+1\}^{n}} \langle x,My \rangle \geq t \right)\\
        &\leq \P\left(\cup_{x,y \in \{-1,+1\}^{n}} \langle x,My \rangle \geq t\right)\\
        &\leq \sum_{x,y\in\{-1,+1\}^{n}}\P\left(\langle x,My \rangle \geq t\right)\\
        &\leq 4^{n}\exp{\left(\frac{-t^{2}}{4\sum_{i\leq j}\nu_{ij}+4ct}\right)} \leq 4^{-n}
    \end{split}
\end{equation}
choosing $t=C\left(\sqrt{\sum_{i\leq j}\nu_{ij}n}+cn\right)$ for a appropriate constant $C$ the result follows.
\end{proof}

Second, we have the following approximations of $\text{OPT}_{k}$ by the log-partition modification of lemma $3.2$ in \cite{montanari} where we were able to bound the difference by $\lVert M\rVert_{\infty \to 1}$ instead of $\lVert M \rVert_{\infty \to 2}$ using a PAC-Bayes bound.

\begin{lema}\label{lemma:2} For $8\leq k \leq n$, $\epsilon \in(0,\frac{1}{2})$ and $\beta>0$ we have with probability at least $1-4^{-n}$
\begin{equation}
    \left| \Phi(\beta,k;M)-\text{OPT$_{k}$}(M) \right| \lesssim \frac{n(k-1)}{2\beta}\log{\left(\frac{C}{\epsilon}\right)} + \epsilon\left(\sqrt{\sum_{i\leq j}\nu_{ij}n}+cn\right)
\end{equation}
\end{lema}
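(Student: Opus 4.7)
The plan is to prove Lemma 2 via the standard variational (PAC--Bayes) approach that connects the log-partition function $\Phi(\beta,k;M)$ to its zero-temperature limit $\text{OPT}_k(M)$, and then to convert the error terms into the norm $\lVert M\rVert_{\infty\to 1}$ so that the concentration estimate already established in the proof of Lemma \ref{lemma:1} can be reused. First I would observe that any feasible $X$ for $\text{OPT}_k(M)$ is a Gram matrix of unit vectors $\sigma_i\in\mathbb{S}^{k-1}$, so $\text{OPT}_k(M)=\max_\sigma H(\sigma)$ with $H(\sigma)\coloneqq\sum_{ij}M_{ij}\langle\sigma_i,\sigma_j\rangle$. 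The upper bound
\begin{equation}
\Phi(\beta,k;M)\leq \text{OPT}_k(M)
\end{equation}
is then immediate by bounding the integrand in the definition of $\Phi$ by $\exp(\beta\,\text{OPT}_k(M))$ and using that $\nu$ is a probability measure.

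For the matching lower bound I would use the Donsker--Varadhan / Gibbs variational formula: for every $\rho\ll\nu$ on $(\mathbb{S}^{k-1})^n$,
\begin{equation}
\Phi(\beta,k;M)\;\geq\;\mathbb{E}_{\sigma\sim\rho}[H(\sigma)]\;-\;\tfrac{1}{\beta}\,\text{KL}(\rho\,\|\,\nu).
\end{equation}
Let $\sigma^\star$ be an optimizer of $H$, so $H(\sigma^\star)=\text{OPT}_k(M)$, and take $\rho$ to be the product of the uniform distributions on the spherical caps $R_i\subset\mathbb{S}^{k-1}$ of Euclidean radius $\epsilon$ around each $\sigma^\star_i$. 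A standard volume estimate gives $\nu(R_i)\gtrsim(\epsilon/C)^{k-1}$, so
\begin{equation}
\text{KL}(\rho\,\|\,\nu)\;=\;\sum_{i=1}^n\log\frac{1}{\nu(R_i)}\;\lesssim\; n(k-1)\log\!\left(\tfrac{C}{\epsilon}\right),
\end{equation}
which accounts for the first term in the claimed bound.

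The heart of the argument is controlling $\text{OPT}_k(M)-\mathbb{E}_\rho[H(\sigma)]$. By symmetry of $R_i$ under rotations fixing $\sigma^\star_i$, one has $\tau_i\coloneqq \mathbb{E}_\rho[\sigma_i]=c_i\sigma^\star_i$ with $1-c_i=O(\epsilon^2)$; using independence of the $\sigma_i$'s across $i$ and $\|\sigma_i\|=1$ almost surely, a direct computation yields
\begin{equation}
\text{OPT}_k(M)-\mathbb{E}_\rho[H(\sigma)]\;=\;\sum_{i\neq j}M_{ij}\bigl(\langle\sigma^\star_i,\sigma^\star_j\rangle-\langle\tau_i,\tau_j\rangle\bigr).
\end{equation}
The key step — and the main obstacle — is to avoid the naive estimate that would bring in $\lVert M\rVert_{\infty\to 2}$: I would expand the bilinear difference as $\langle\sigma^\star_i-\tau_i,\sigma^\star_j\rangle+\langle\tau_i,\sigma^\star_j-\tau_j\rangle$, regard each factor $(\sigma^\star_i-\tau_i)$ as an $\epsilon$-scaled vector in $\mathbb{S}^{k-1}$, and then use a vector-valued Grothendieck-type argument: any expression of the form $\sum_{ij}M_{ij}\langle u_i,v_j\rangle$ with $\lVert u_i\rVert,\lVert v_j\rVert\leq 1$ is bounded by $K_G\lVert M\rVert_{\infty\to 1}$ (this is exactly the inequality \ref{eq:154} used to prove Lemma \ref{lemma:1}, applied at the vector level). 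Together with the extra factor $\epsilon$ coming from the radii, this gives
\begin{equation}
\text{OPT}_k(M)-\mathbb{E}_\rho[H(\sigma)]\;\lesssim\;\epsilon\,\lVert M\rVert_{\infty\to 1}.
\end{equation}

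Finally, I would invoke the tail bound already established in the proof of Lemma \ref{lemma:1}: with probability at least $1-4^{-n}$,
\begin{equation}
\lVert M\rVert_{\infty\to 1}\;\lesssim\;\sqrt{\textstyle\sum_{i\leq j}\nu_{ij}\,n}\;+\;cn.
\end{equation}
Combining the upper bound, the PAC--Bayes lower bound, the KL estimate, and the expectation bound on the same event produces
\begin{equation}
|\Phi(\beta,k;M)-\text{OPT}_k(M)|\;\lesssim\;\tfrac{n(k-1)}{2\beta}\log\!\left(\tfrac{C}{\epsilon}\right)+\epsilon\!\left(\sqrt{\textstyle\sum_{i\leq j}\nu_{ij}\,n}+cn\right),
\end{equation}
as claimed. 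The delicate point throughout is the vector-valued Grothendieck step, which is what allows the improvement from $\lVert M\rVert_{\infty\to 2}$ (as in the original Montanari--Sen argument) to $\lVert M\rVert_{\infty\to 1}$ and makes the concentration bound on $4^n$ sign patterns directly applicable.
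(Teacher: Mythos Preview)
Your proposal is correct and follows essentially the same route as the paper: a variational/Jensen lower bound on $\Phi$ using a product of spherical caps around the optimizer $\sigma^\star$, a cap-volume estimate for the entropy term, and a Grothendieck-type inequality to convert the $O(\epsilon)$ expectation gap into $\epsilon\lVert M\rVert_{\infty\to 1}$, followed by the same sub-gamma tail bound on $\lVert M\rVert_{\infty\to 1}$ already used in Lemma~\ref{lemma:1}. The paper just packages the two key ingredients as separate statements (Propositions~\ref{prop:norm} and~\ref{prop:random}) and parameterizes the cap by inner product $\langle\eta_i,\sigma_i\rangle\geq 1-\epsilon$ rather than Euclidean radius, but the mechanics are identical.
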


For this proof we will need two preliminary results. One is the following.

\begin{prop}\label{prop:norm} There exists a universal constant $C>0$ such that for any $n,k\in\mathbb{N}$, any $M\in \mathbb{R}^{n\times n}$ and any ${\bf \sigma}\in (\mathbb{S}^{k-1})^n$, 
\begin{equation}
    \left|\sum_{i,j=1}^nM_{i,j}\langle \sigma_i,\sigma_j\rangle\right|\leq C\,\|M\|_{\infty\to 1}.
\end{equation} 
\end{prop}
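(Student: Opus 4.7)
The plan is to recognize the stated inequality as an immediate instance of the classical Grothendieck inequality. Recall that Grothendieck's inequality asserts the existence of a universal constant $K_G$ (Grothendieck's constant) such that, for any $n \in \mathbb{N}$, any matrix $M \in \mathbb{R}^{n \times n}$, and any vectors $u_1, \dots, u_n, v_1, \dots, v_n$ in a (real) Hilbert space $\mathcal{H}$ satisfying $\|u_i\|_{\mathcal{H}} \leq 1$ and $\|v_j\|_{\mathcal{H}} \leq 1$, one has
\begin{equation*}
    \left|\sum_{i,j=1}^n M_{ij} \langle u_i, v_j \rangle_{\mathcal{H}}\right| \;\leq\; K_G \sup_{x,y \in \{-1,+1\}^n}\sum_{i,j=1}^n M_{ij} x_i y_j \;=\; K_G\, \|M\|_{\infty \to 1}.
\end{equation*}

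To deduce the proposition, I would apply this inequality with $\mathcal{H} = \mathbb{R}^k$ (which embeds isometrically into any separable Hilbert space) and the specific choice $u_i = v_i = \sigma_i$. Since each $\sigma_i \in \mathbb{S}^{k-1}$ has unit norm, the hypotheses of Grothendieck's inequality are satisfied, and the conclusion yields exactly
\begin{equation*}
    \left|\sum_{i,j=1}^n M_{ij} \langle \sigma_i, \sigma_j \rangle\right| \;\leq\; K_G\, \|M\|_{\infty \to 1},
\end{equation*}
so the proposition holds with $C = K_G$, independent of $n$, $k$, $M$, and $\sigma$.

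There is no real obstacle here, since Grothendieck's inequality is a standard tool (in fact the paper already uses it implicitly via equation~\eqref{eq:154} and the reference to \cite{guedon2016community}). The only thing worth emphasizing in the writeup is that the constant is truly dimension-free: it does not depend on $k$, which is what makes the subsequent PAC-Bayes argument in Lemma~\ref{lemma:2} meaningful when one wants to let $k$ grow with $n$. If one wishes to avoid invoking Grothendieck's inequality as a black box, an alternative would be to use the standard rounding argument (Krivine's rounding or a Gaussian random projection of the $\sigma_i$ onto $\{-1,+1\}$), but this would only reprove a known result with a possibly worse constant, so I would simply cite Grothendieck's theorem.
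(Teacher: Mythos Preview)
Your proposal is correct and essentially matches the paper's approach: the paper invokes the Alon--Naor rounding lemma (Lemma~4.2 in \cite{alon2004approximating}) to construct random $\xi,\eta\in\{-1,+1\}^n$ with $C\,\E[\xi_i\eta_j]=\langle\sigma_i,\sigma_j\rangle$, which is precisely the Krivine-type rounding you mention as an alternative and yields the explicit constant $C=\pi/(2\ln(1+\sqrt{2}))$. You cite Grothendieck's inequality as a black box, the paper unpacks its proof via the rounding scheme---same idea, and you even anticipated this in your last paragraph.
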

\begin{proof}Given $\sigma$,  Lemma 4.2 in \cite{alon2004approximating} implies that there one can choose random vectors $\xi,\eta\in \{-1,+1\}^n$ such that for all pairs $1\leq i,j\leq n$
\begin{equation}
    C\E\,\xi_i\eta_j = \langle \sigma_i,\sigma_j\rangle,
\end{equation}
with 
\begin{equation}
    C=\frac{\pi}{2\ln(1+\sqrt{2})}\in (0,+\infty).
\end{equation}
Linearity of expectation implies
\begin{equation}
    \left|\sum_{i,j=1}^nM_{i,j}\langle \sigma_i,\sigma_j\rangle\right| = C\left|\E\left[\sum_{i,j=1}^nM_{i,j}\,\xi_i\eta_j\right]\right|\leq C\,\E\left[\left|\sum_{i,j=1}^nM_{i,j}\,\xi_i\eta_j\right|\right]
\end{equation}

The expression inside the expectation is upper bounded by
\begin{equation}
\left|\sup_{x,y\in \{-1,+1\}^{n}}x^TMy\right| = \|M\|_{\infty\to 1}.
\end{equation}
\end{proof}

\begin{prop}\label{prop:random}Given ${\bf \sigma}\in (\mathbb{S}^{k-1})^n$ and $\epsilon\in (0,1/2]$ with $k\geq 8$. Let $\nu$ be the uniform measure over $(\mathbb{S}^{k-1})^n$ and let $\nu_{\sigma,\epsilon}$ denote $\nu$ conditioned to the set
\begin{equation}
    A_{\sigma,\epsilon}:=\{\eta\in (\mathbb{S}^{k-1})^n\,:\, \forall i\in[n],\, \langle \eta_i,\sigma_i\rangle\geq 1-\epsilon\}.
\end{equation}
Then 
\begin{equation}
    \nu(A_{\sigma,\epsilon})\geq \frac{\epsilon^{\frac{(k-1)n}{2}}}{(6\sqrt{k})^n}
\end{equation}
and 
\begin{equation}
    \E_{\eta\sim \nu_{\sigma,\epsilon}}\,\langle \eta_i,\eta_j \rangle= \alpha^2\,\langle \sigma_i,\sigma_j\rangle + (1-\alpha^{2})\mathbb{1}_{i=j}\mbox{ with }\alpha\in [1-\epsilon,1].
\end{equation}
\end{prop}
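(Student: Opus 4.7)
The plan is to exploit the product structure of $A_{\sigma,\epsilon}$ and the rotational symmetry of the uniform measure on the sphere. The set $A_{\sigma,\epsilon}$ is the Cartesian product of spherical caps $C_{\sigma_i,\epsilon}:=\{\eta\in \mathbb{S}^{k-1}:\langle \eta,\sigma_i\rangle \geq 1-\epsilon\}$, and $\nu$ is the $n$-fold product of the uniform probability $\mu$ on $\mathbb{S}^{k-1}$, so $\nu(A_{\sigma,\epsilon})=\prod_{i=1}^n \mu(C_{\sigma_i,\epsilon})$. By rotation invariance each factor equals a common value $\mu(C_\epsilon)$, where $C_\epsilon=\{\eta\in\mathbb{S}^{k-1}:\eta_1\geq 1-\epsilon\}$, so it suffices to prove $\mu(C_\epsilon)\geq \epsilon^{(k-1)/2}/(6\sqrt{k})$ and raise to the $n$-th power.

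For the cap volume, I would use the one-dimensional marginal identity
\[
\mu(C_\epsilon)=\frac{\int_{1-\epsilon}^{1}(1-t^2)^{(k-3)/2}\,dt}{\int_{-1}^{1}(1-t^2)^{(k-3)/2}\,dt}.
\]
For the numerator, factor $1-t^2=(1-t)(1+t)\geq 1-t$ when $t\geq 0$, giving $\int_{1-\epsilon}^{1}(1-t^2)^{(k-3)/2}\,dt\geq 2\epsilon^{(k-1)/2}/(k-1)$. For the denominator, the change of variables $t=\sin\theta$ turns it into the Beta integral $B(1/2,(k-1)/2)=\sqrt{\pi}\,\Gamma((k-1)/2)/\Gamma(k/2)$, which by Gautschi's inequality applied at $x=(k-2)/2$ is bounded above by $\sqrt{2\pi/(k-2)}$. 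Taking the ratio and using the assumption $k\geq 8$ to verify that $12\sqrt{k(k-2)}/((k-1)\sqrt{2\pi})\geq 1$ absorbs the numerical constants and yields the claimed bound.

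For the expectation identity, the key observation is that conditioning the product measure $\nu$ on the product set $A_{\sigma,\epsilon}$ preserves independence of the coordinates, so under $\nu_{\sigma,\epsilon}$ each $\eta_i$ is independently uniform on $C_{\sigma_i,\epsilon}$. Rotational symmetry of the cap around its axis $\sigma_i$ forces $\E\eta_i$ to be parallel to $\sigma_i$; write $\E\eta_i=\alpha_i\sigma_i$, and the same rotation invariance implies $\alpha_i$ is a scalar $\alpha$ that depends neither on $i$ nor on the base-point $\sigma_i$. Taking the inner product with $\sigma_i$ on both sides yields $\alpha=\E\langle\eta_i,\sigma_i\rangle\in[1-\epsilon,1]$, since the integrand lies in that interval on the cap. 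For $i\neq j$, independence gives $\E\langle\eta_i,\eta_j\rangle=\langle\alpha\sigma_i,\alpha\sigma_j\rangle=\alpha^2\langle\sigma_i,\sigma_j\rangle$; for $i=j$, $\E\|\eta_i\|^2=1$ while the proposed formula evaluates to $\alpha^2+(1-\alpha^2)=1$, so both cases are captured by the unified expression with the indicator $\mathbb{1}_{i=j}$.

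The main technical point is the cap volume estimate with the specific constant $1/(6\sqrt{k})$; verifying that the crude inequalities $1-t^2\geq 1-t$ and Gautschi together yield this constant uniformly for $k\geq 8$ is the only place where numerical care is required. Everything else is Fubini plus the symmetry of a uniform measure on a spherical cap around its axis.
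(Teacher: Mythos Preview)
Your proof is correct and follows essentially the same route as the paper: product structure of $\nu$ and $A_{\sigma,\epsilon}$, rotation invariance to reduce to a single cap, and rotational symmetry of the cap around its axis to get $\E\eta_i=\alpha\sigma_i$. The one substantive difference is in the cap-volume step: the paper simply cites the bound $\nu_1(C_\epsilon)\geq (2\epsilon-\epsilon^2)^{(k-1)/2}/(6\sqrt{k}(1-\epsilon))$ from Brieden et al., whereas you derive it yourself via the marginal density, the inequality $1-t^2\geq 1-t$ on $[0,1]$, and Gautschi's inequality applied to $\Gamma((k-1)/2)/\Gamma(k/2)$. Your numerical check that $12\sqrt{k(k-2)}/((k-1)\sqrt{2\pi})\geq 1$ for $k\geq 8$ is sound (the ratio is increasing in $k$ since $k(k-2)/(k-1)^2=1-(k-1)^{-2}$, and at $k=8$ it is already about $4.7$), so your self-contained argument is a genuine, if minor, improvement in exposition over quoting an external lemma.
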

\begin{proof} We have that $\nu$ is a product measure and $A_{\sigma,\epsilon}$ also has a product structure:
\begin{equation}
    A_{\sigma,\epsilon} = \times_{i=1}^n\{x\in \mathbb{S}^{k-1}\,:\,\langle x,\sigma_i\rangle\geq 1-\epsilon\}.
\end{equation}
The right hand side is a Cartesian product of spherical caps, and the measures of these caps (as subsets of $\mathbb{S}^{k-1}$) do not depend on the choice of $\sigma_1$. Therefore,
\begin{equation}
    \nu(A_{\sigma,\epsilon}) = (\nu_1\{x\in \mathbb{S}^{k-1}\,:\,\langle x,\sigma_1\rangle\geq 1-\epsilon\})^n
\end{equation}
where $\nu_1$ is the uniform measure on $\mathbb{S}^{k-1}$. The inequality for $\nu(A_{\sigma,\epsilon})$ follows from 
\begin{equation}
    \nu_1\{x\in \mathbb{S}^{k-1}\,:\,\langle x,\sigma_1\rangle\geq 1-\epsilon\}\geq \frac{(2\epsilon - \epsilon^2)^{\frac{(k-1)}{2}}}{6\sqrt{k}\,(1-\epsilon)},
\end{equation}
which the lower bound shown in the second part of \cite[Lemma 2.1]{brieden2001deterministic} (this inequality requires $1-\epsilon\geq \sqrt{2/k}$, which is guaranteed by $k\geq 8$ and $\epsilon\leq 1/2$; we also omitted a few terms from the lower bound for simplicity).  

The product structure of $A_{\sigma,\epsilon}$ implies that the conditional measure $\nu_{\sigma,\epsilon}$ is also a product measure. In particular, if $\eta\sim \nu_{\sigma,\epsilon}$, then
\[\eta_1,\dots,\eta_n\mbox{ are independent.}\]
Additionally,
\begin{equation}
    \eta_i = a_i\,\sigma_i + \sigma_i^\perp
\end{equation}
where $\sigma_i^\perp\in \mathbb{S}^{k-1}$ is a random vector perpendicular to $\sigma_i$ and $a_i  :=\langle \eta_i,\sigma_i\rangle\in [1-\epsilon,1)$ almost surely. Since the law of $\eta_i$ is invariant by rotations of $\mathbb{S}^{k-1}$ that leaves $\sigma_i$ fixed, then for $i \neq j$ we have 
\begin{equation}
    \E_{\eta\sim \nu_{\sigma,\epsilon}}[\sigma_i^\perp]=0\mbox{ and }\E_{\eta\sim \nu_{\sigma,\epsilon}}\,\langle \eta_i,\eta_j\rangle = \E_{\eta\sim \nu_{\sigma,\epsilon}}[a_i]\E_{\eta\sim \nu_{\sigma,\epsilon}}[a_j]\,\langle \sigma_i,\sigma_j\rangle,
\end{equation}
and (by rotation invariance) $\alpha  = \E_{\eta\sim \nu_{\sigma,\epsilon}}[a_i]$ is independent of $i$. For $i=j$ we have
\begin{equation}
\begin{split}
    \E_{\eta\sim \nu_{\sigma,\epsilon}}\,\langle \eta_i,\eta_i\rangle &= \E_{\eta\sim \nu_{\sigma,\epsilon}}[a_i^{2}]\,\langle \sigma_i,\sigma_i\rangle\\
    1 &= \E_{\eta\sim \nu_{\sigma,\epsilon}}[a_i^{2}]\cdot1
\end{split}
\end{equation}
such that $\E_{\eta\sim \nu_{\sigma,\epsilon}}[a_i^{2}]= 1 = (1-\alpha^{2})+\alpha^{2}$.
\end{proof}

\begin{proof}[Proof of Lemma \ref{lemma:2}]Choose $\sigma\in (\mathbb{S}^{k-1})^n$ such that 
\begin{equation}
    {\rm OPT}_k(M)=\sum_{i,j=1}^n M_{i,j}\langle \sigma_i,\sigma_j\rangle.
\end{equation}
By Proposition \ref{prop:random} and Jensen's inequality,
\begin{equation}
    \alpha^2\,{\rm OPT}_k(M) + (1-\alpha^{2})\tr{(M)} = \E_{\eta\sim \nu_{\sigma,\epsilon}}\left[\sum_{i,j=1}^n M_{i,j}\langle \eta_i,\eta_j\rangle\right] \leq \frac{1}{\beta}\log\left(\E_{\eta\sim \nu_{\sigma,\epsilon}}\left[\exp\left(\beta\sum_{i,j=1}^n M_{i,j}\langle \eta_i,\eta_j\rangle\right)\right]\right).
\end{equation}
 Now notice that, since the exponential is nonnegative and $\nu_{\sigma,\epsilon} = \nu(\cdot\mid A_{\sigma,\epsilon})$,
\begin{equation}
\begin{split}
    \E_{\eta\sim \nu_{\sigma,\epsilon}}\left[\exp\left(\beta\sum_{i,j=1}^n M_{i,j}\langle \eta_i,\eta_j\rangle\right)\right] & = \frac{\E_{\eta\sim \nu}\left[\exp\left(\beta\sum_{i,j=1}^n M_{i,j}\langle \eta_i,\eta_j\rangle\right){\bf 1}_{A_{\sigma,\epsilon}}(\eta)\right]}{\nu(A_{\sigma,\epsilon})}\\ &\leq \frac{\E_{\eta\sim \nu}\left[\exp\left(\beta\sum_{i,j=1}^n M_{i,j}\langle \eta_i,\eta_j\rangle\right)\right]}{\nu(A_{\sigma,\epsilon})}.
\end{split}
\end{equation}
Taking logs and plugging into the previous display, we arrive at
\begin{equation}
    \alpha^2{\rm OPT}_k(M) + (1-\alpha^{2})\tr{(M)}\leq \Phi(k,\beta,M) + \frac{\log(1/\nu(A_{\sigma,\epsilon}))}{\beta}\leq \Phi(k,\beta,M) + \frac{n\,(k-1)}{2\beta}\log((6\sqrt{k})^{\frac{2}{k-1}}/\epsilon).
\end{equation}
On the other hand $\alpha\in [1-\epsilon,1]$ implies $\alpha^2\in [1-2\epsilon+\epsilon^2,1]\subset [1-2\epsilon,1]$, so
\begin{equation}
    {\rm OPT}_k(M) = \alpha^2\,{\rm OPT}_k(M) + (1-\alpha^2)\,{\rm OPT}_k(M)\leq \alpha^2\,{\rm OPT}_k(M) + 2\epsilon\,\,C\|M\|_{\infty\to 1}.
\end{equation}
where we used Proposition \ref{prop:norm}. 

Therefore, 
\begin{equation}
    {\rm OPT}_k(M) \leq \Phi(k,\beta,M) + \frac{n\,(k-1)}{2\beta}\log(c/\epsilon) - (1-\alpha^{2})\tr{(M)} + 2\epsilon\,\,C\|M\|_{\infty\to 1}
\end{equation}
where $c =\sup_{k\geq 8}(6\sqrt{k})^{\frac{2}{k-1}}<+\infty$. 

Considering that $\alpha^2\in [1-2\epsilon+\epsilon^2,1]$ and $\epsilon^2<\epsilon$ for $\epsilon \in (0,\frac{1}{2})$, we obtain $0 \leq 1-\alpha^{2} \leq 2\epsilon$. We also have that exists a constant $C'>0$ such that $|\tr{(M)}|\leq C'\lVert M \rVert_{\infty \to 1}$, because
\begin{equation}
    |\tr(M)| = |\tr(IM)| \leq |\text{SDP}(M)| \leq C' \lVert M \rVert_{\infty\to 1}
\end{equation}
and we used Proposition \ref{prop:norm} again with $k=n$ and $\sigma$ attaining the maximum. We conclude that $-\tr{(M)} \leq C'\lVert M \rVert_{\infty \to 1}$ and then
\begin{equation}
    {\rm OPT}_k(M) \leq \Phi(k,\beta,M) + \frac{n\,(k-1)}{2\beta}\log(c/\epsilon) + (2\epsilon C +\epsilon C')\|M\|_{\infty\to 1}
\end{equation}

The desired upper bound on ${\rm OPT}_k(M)$ follows from adjusting the constants $C,C'$. The lower bound follows because $\beta>0$,
\begin{equation}
    \exp(\beta\,\Phi(k,\beta,M)) = \E_{\eta\sim \nu}\left[\exp\left(\beta\sum_{i,j=1}^n M_{i,j}\langle \eta_i,\eta_j\rangle\right)\right]\leq \exp\left({\beta\,\sup_{\eta\in (\mathbb{S}^{k-1})^n}\sum_{i,j=1}^n M_{i,j}\langle \eta_i,\eta_j\rangle}\right)
\end{equation}
and the RHS of this inequality is precisely $\exp(\beta\,{\rm OPT}_k(M))$.

To complete the proof, we use the already calculated bound for the probability of $\lVert M \rVert_{\infty \to 1}$ being large.
\end{proof}

Now, we have the following calculation to be used in the concentration parts of equation \ref{eq:54}.

\underline{Bounded variation of $\Phi$:}
Let $M'$ be the same matrix as $M$ except by the entry $ij$ that is substituted by $M_{ij}'$. Then, we have
\begin{equation}
    \begin{split}
        &|\Phi(\beta,k;M)-\Phi(\beta,k;M')| = \Bigg| \frac{1}{\beta} \log\Bigg(\int \exp{\Bigg(\beta \sum_{\substack{k,l=1\\k\neq i,l \neq j}}^{n}M_{kl}\langle \sigma_k,\sigma_l \rangle + M_{ij}\langle \sigma_{i},\sigma_{j}\rangle \Bigg)}d\nu(\sigma)\Bigg)\\
        &- \frac{1}{\beta} \log\Bigg(\int \exp{\Bigg(\beta \sum_{\substack{k,l=1\\k\neq i,l \neq j}}^{n}M_{kl}\langle \sigma_k,\sigma_l \rangle + M'_{ij}\langle \sigma_{i},\sigma_{j}\rangle \Bigg)}d\nu(\sigma)\Bigg) \Bigg| \\
        &=\Bigg| \frac{1}{\beta} \log\Bigg(\frac{\int \exp(\beta \sum M_{kl}\langle \sigma_{k},\sigma_{l}\rangle + M_{ij}\langle \sigma_i,\sigma_j \rangle)d\nu(\sigma)}{\int \exp(\beta \sum M_{kl}\langle \sigma_{k},\sigma_{l}\rangle + M'_{ij}\langle \sigma_i,\sigma_j \rangle)d\nu(\sigma)}\Bigg)\Bigg|\\
        &=\Bigg| \frac{1}{\beta} \log\Bigg(\frac{\int \exp(\beta \sum M_{kl}\langle \sigma_{k},\sigma_{l}\rangle + M'_{ij}\langle \sigma_i,\sigma_j \rangle)\exp(\beta(M_{ij}-M'_{ij})\langle \sigma_{i},\sigma_{j}\rangle) d\nu(\sigma)}{\int \exp(\beta \sum M_{kl}\langle \sigma_{k},\sigma_{l}\rangle + M'_{ij}\langle \sigma_i,\sigma_j \rangle)d\nu(\sigma)}\Bigg)\Bigg|\\
        &\leq \Bigg| \frac{1}{\beta} \log\Bigg(\frac{\int \exp(\beta \sum M_{kl}\langle \sigma_{k},\sigma_{l}\rangle + M'_{ij}\langle \sigma_i,\sigma_j \rangle)d\nu(\sigma) \max_{\sigma}\exp(|\beta(M_{ij}-M'_{ij})\langle \sigma_{i},\sigma_{j}\rangle|)}{\int \exp(\beta \sum M_{kl}\langle \sigma_{k},\sigma_{l}\rangle + M'_{ij}\langle \sigma_i,\sigma_j \rangle)d\nu(\sigma)}\Bigg)\Bigg|\\
        &\leq \Bigg| \frac{1}{\beta}\log\Bigg(\exp(\beta|M_{ij}-M'_{ij}|)\Bigg)\Bigg|\\
        &= |M_{ij}-M'_{ij}| \\
    \end{split}
\end{equation}
where in the first inequality below we used $\log$ is increasing, Hölder inequality, $\exp$ increasing and in the second inequality that $\langle \sigma_{i},\sigma_{j}\rangle \leq 1$ and $\exp,\log$ are increasing.

The concentration part is given by the following lemma.

\begin{lema}
    With probability at least $1-n^{-\delta}$ where $\delta>0,\,\beta>0$ and $k>0$, we have
    \begin{equation}
        |\Phi(\beta,k;M)-\E[\Phi(\beta,k,M)]| \lesssim \sqrt{\sum_{i\leq j}\nu_{ij}n^{\delta}}
    \end{equation}
\end{lema}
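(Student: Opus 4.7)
The plan is to combine the bounded variation calculation immediately preceding the lemma with the Efron--Stein inequality and Chebyshev's inequality. This is a very generous route that nevertheless yields exactly the stated polynomial bound.

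The preceding display already shows that for each independent entry $M_{ij}$ ($1 \leq i \leq j \leq n$), replacing it by an independent copy $M'_{ij}$ changes $\Phi(\beta,k;M)$ by at most $|M_{ij} - M'_{ij}|$. First I would invoke the Efron--Stein inequality for functions of independent random variables: letting $M^{(ij)}$ denote $M$ with entry $M_{ij}$ resampled, the 1-Lipschitz control gives
\begin{equation*}
\Var(\Phi(\beta,k;M)) \leq \tfrac{1}{2}\sum_{i \leq j}\E\bigl[(\Phi(\beta,k;M) - \Phi(\beta,k;M^{(ij)}))^2\bigr] \leq \tfrac{1}{2}\sum_{i \leq j}\E[(M_{ij}-M'_{ij})^2] = \sum_{i \leq j}\Var(M_{ij}) \leq \sum_{i \leq j}\nu_{ij},
\end{equation*}
where the identity $\E[(M_{ij}-M'_{ij})^2]=2\Var(M_{ij})$ uses that $M'_{ij}$ is an i.i.d.\ copy, and the final inequality uses Assumption~1 with $p=2$.

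Then Chebyshev's inequality immediately yields
\begin{equation*}
\P\bigl(|\Phi(\beta,k;M) - \E\Phi(\beta,k;M)| \geq t\bigr) \leq \frac{\sum_{i \leq j}\nu_{ij}}{t^2},
\end{equation*}
and choosing $t = \sqrt{\sum_{i\leq j}\nu_{ij}\, n^{\delta}}$ makes the right-hand side exactly $n^{-\delta}$, which is the stated bound (up to absorbing constants into $\lesssim$).

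There is essentially no serious obstacle here: the argument does not exploit the sub-gamma structure at all, only second moments. A sharper analysis via the entropy method or a martingale Bernstein inequality with sub-gamma differences would replace the factor $n^{\delta}$ inside the square root by something of order $\delta\log n$ (at the cost of an additive scale-parameter term), but the polynomial bound above is already what the main telescoping decomposition \eqref{eq:54} requires given the later choices of $k,\beta,\epsilon,\delta$, so the direct Efron--Stein plus Chebyshev route is both the cleanest and the appropriate level of precision.
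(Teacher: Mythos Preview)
Your proposal is correct and follows essentially the same route as the paper: Efron--Stein on the bounded-variation estimate to bound $\Var(\Phi)$ by $\sum_{i\leq j}\nu_{ij}$, followed by Chebyshev with $t=\sqrt{\sum_{i\leq j}\nu_{ij}\,n^{\delta}}$. The only cosmetic difference is that the paper carries a few extra constants through the variance bound before absorbing them into $\lesssim$, whereas you track the factor $1/2$ in Efron--Stein and the identity $\E[(M_{ij}-M'_{ij})^2]=2\Var(M_{ij})$ directly.
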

\begin{proof}
By the Efron-Stein inequality and the above bounded variation calculation we have
\begin{equation}
\begin{split}
    \V(\Phi(\beta,k;M)) &\leq \sum_{i,j=1}^{n}\E[\Phi(\beta,k;M)-\Phi(\beta,k;M^{(ij)})]\\
    &\leq 2\sum_{i\leq j}\E[|M_{ij}-M'_{ij}|^{2}]\\
    &= 2\sum_{i\leq j}\E[|M_{ij}-\E[M_{ij}]+\E[M_{ij}]-M'_{ij}|^{2}]\\
    &\leq 4\sum_{i \leq j}\V(M_{ij}) \leq C\sum_{i\leq j}\nu_{ij}
\end{split}
\end{equation}
where the superscript $(ij)$ means that we are considering the same matrix but with a independent copy of the $ij$-th entry and the last inequality is due to the fact that for a distribution $Y\sim sub\Gamma(\nu,c)$ we have $\V(Y) \lesssim \nu$.

Now, for any $\varepsilon>0$ by the Chebyshev inequality we obtain
\begin{equation}
    \P(|\Phi(\beta,k;M)-\E[\Phi(\beta,k;M)]|\geq \varepsilon) \leq \frac{\V(\Phi(\beta,k;M))}{\varepsilon^{2}} \leq \frac{2\sum_{i\leq j}\nu_{ij}}{\varepsilon^{2}}
\end{equation}
choosing $\varepsilon = \sqrt{C\sum_{i\leq j}\nu_{ij}n^{\delta}}$ the result follows.
\end{proof}

Finally, for the universality result of the expectation, we use lemma $E.3$ from \cite{montanari}.
\begin{lema}\label{lemma:4} Let $F:\mathbb{R}^{N} \rightarrow \mathbb{R}$ be a three times continuously differentiable function and $Y=(Y_{1},Y_{2},...,Y_{N})$, $Z=(Z_{1},Z_{2},...,Z_{N})$ be independent random vectors such that $\E[Y_{i}]=\E[Z_{i}]$, $\E[Y_{i}^2]=\E[Z_{i}^2]$ for $1 \leq i\leq N$, then
\begin{equation}
    |\E[F(Y)]-\E[F(Z)]| \leq \frac{1}{6}\left[\sum_{i=1}^{N} \{\E[|Y_{i}|^{3}] + \E[|Z_{i}|^{3}]\}\right]\max_{i \in [N]}{\lVert \partial_{i}^{3}F\rVert}_{\infty}.
\end{equation}
\end{lema}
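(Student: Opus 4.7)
The plan is to prove Lemma \ref{lemma:4} by a standard Lindeberg telescoping/exchange argument, replacing the coordinates of $Y$ by those of $Z$ one at a time. Concretely, I would introduce the hybrid vectors
\begin{equation*}
W^{(k)} := (Z_{1},\dots,Z_{k},Y_{k+1},\dots,Y_{N}), \qquad k = 0,1,\dots,N,
\end{equation*}
so that $W^{(0)} = Y$ and $W^{(N)} = Z$, and write the telescoping identity
\begin{equation*}
\E[F(Z)] - \E[F(Y)] = \sum_{k=1}^{N} \bigl( \E[F(W^{(k)})] - \E[F(W^{(k-1)})] \bigr).
\end{equation*}
Note that $W^{(k)}$ and $W^{(k-1)}$ differ only in the $k$-th coordinate, which equals $Z_k$ in the former and $Y_k$ in the latter.

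For each fixed $k$, I would then introduce the ``anchor'' vector $V^{(k)} := (Z_{1},\dots,Z_{k-1},0,Y_{k+1},\dots,Y_{N})$, which depends on all coordinates except the $k$-th one. Applying Taylor's theorem with integral remainder to $F$ in the $k$-th coordinate at the point $V^{(k)}$, I obtain
\begin{equation*}
F(W^{(k)}) = F(V^{(k)}) + Z_{k}\,\partial_{k}F(V^{(k)}) + \tfrac{1}{2}Z_{k}^{2}\,\partial_{k}^{2}F(V^{(k)}) + R^{Z}_{k},
\end{equation*}
with $|R^{Z}_{k}| \leq \tfrac{1}{6}|Z_{k}|^{3}\|\partial_{k}^{3}F\|_{\infty}$, and analogously an expansion for $F(W^{(k-1)})$ with $Y_{k}$ in place of $Z_{k}$ and remainder $R^{Y}_{k}$ satisfying $|R^{Y}_{k}| \leq \tfrac{1}{6}|Y_{k}|^{3}\|\partial_{k}^{3}F\|_{\infty}$.

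The crucial step is then to take expectations. By independence of the components within $Y$ and $Z$ and of $Y$ from $Z$, the scalar $Z_{k}$ is independent of $V^{(k)}$, and similarly for $Y_{k}$. Therefore
\begin{equation*}
\E[Z_{k}\,\partial_{k}F(V^{(k)})] = \E[Z_{k}]\,\E[\partial_{k}F(V^{(k)})], \qquad \E[Z_{k}^{2}\,\partial_{k}^{2}F(V^{(k)})] = \E[Z_{k}^{2}]\,\E[\partial_{k}^{2}F(V^{(k)})],
\end{equation*}
and similarly for $Y_{k}$. Using the matching-moments assumptions $\E[Y_{k}] = \E[Z_{k}]$ and $\E[Y_{k}^{2}] = \E[Z_{k}^{2}]$, the zeroth-, first-, and second-order contributions cancel between the two Taylor expansions, leaving only
\begin{equation*}
\bigl|\E[F(W^{(k)})] - \E[F(W^{(k-1)})]\bigr| = \bigl|\E[R^{Z}_{k} - R^{Y}_{k}]\bigr| \leq \tfrac{1}{6}\|\partial_{k}^{3}F\|_{\infty}\bigl(\E[|Y_{k}|^{3}] + \E[|Z_{k}|^{3}]\bigr).
\end{equation*}
Summing over $k$, using the triangle inequality on the telescoping sum, and bounding $\|\partial_{k}^{3}F\|_{\infty} \leq \max_{i}\|\partial_{i}^{3}F\|_{\infty}$ yields the stated bound.

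There is no real obstacle here: the argument is entirely classical. The only minor technical point to be careful about is the independence structure used in the cancellation step (i.e., that $V^{(k)}$ is a function only of $\{Z_{1},\dots,Z_{k-1},Y_{k+1},\dots,Y_{N}\}$ and is therefore independent of both $Y_{k}$ and $Z_{k}$), and verifying that the $C^{3}$ hypothesis on $F$ suffices to write the Taylor remainder in the form $\tfrac{1}{6}|t|^{3}\|\partial_{k}^{3}F\|_{\infty}$, which follows from the integral form of the remainder applied along the line segment from $V^{(k)}$ to $V^{(k)} + t\,e_{k}$.
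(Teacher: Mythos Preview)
Your Lindeberg swapping argument is correct and is the standard proof of this abstract universality lemma. Note, however, that the paper does not actually prove the lemma as stated: it cites it as Lemma~E.3 of \cite{montanari}, and the ``proof'' block that follows is an \emph{application} of the lemma to the specific function $F=\Phi(\beta,k;\cdot)$ (computing $\lVert\partial_{ij}^{3}\Phi\rVert_\infty\leq 6\beta^{2}$ via the Gibbs-measure representation and bounding the third absolute moments of the sub-gamma and Gaussian entries), not a derivation of the abstract bound itself. Your proposal thus supplies precisely the argument the paper outsources; the one implicit hypothesis you rely on---mutual independence of the coordinates within $Y$ and within $Z$, not just independence of the vectors $Y$ and $Z$ from each other---is indeed what is intended (and used) in the paper's application, though the lemma statement's phrasing ``independent random vectors'' is slightly ambiguous on this point.
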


\begin{proof} In our case, we use the matrix $Y=X$ as a vector of its entries and the same for $Z=D$ and they are already independent. Moreover, we have $N=n^2$ and $F(M)=\Phi(\beta,k;M)$. 

Let us now calculate $\lVert \partial_{ij}^{3}\Phi \rVert_{\infty}$. Let $\partial_{ij} = \partial / \partial M_{ij}$. Defining the Gibbs measure
\begin{equation}
    \mu_{M}(\sigma) \coloneqq \frac{\exp{(\beta H_{M}(\sigma))}}{\int \exp{(\beta H_{M}(\tau))}d\nu(\tau)} d\nu(\sigma)
\end{equation}
where $H_{M}(\sigma)= \langle \sigma, M \sigma \rangle = \sum_{i,j=1}^{n}M_{ij}\langle \sigma_{i},\sigma_{j} \rangle$ and $\sigma$ is already defined. We calculate the derivatives.

The first derivative gives
\begin{equation}
\begin{split}
    \partial_{ij}\Phi(\beta,k;M) &= \frac{1}{\beta} \frac{\partial}{\partial M_{ij}} \log{\left(\int \exp{\left(\beta \sum_{i,j=1}^{n}M_{ij}\langle \sigma_{i},\sigma_{j}\rangle \right)} d\nu(\sigma)\right)}\\
    &= \frac{1}{\beta}\int \frac{\beta \langle \sigma_{i}, \sigma_{j} \rangle \exp{(\beta H_{M}(\sigma))}}{\int \exp{(\beta H_{M}(\tau))}d\nu(\tau)} d\nu(\sigma) =: \mu_{M}(\langle \sigma_{i}, \sigma_{j} \rangle)
\end{split}
\end{equation}
where we use the notation that $\mu_{M}(f(\sigma))$ represents the expectation of $f$ in relation to the measure $\mu_{M}$.
The second derivative gives

\begin{align}
    \partial_{ij}^{2}\Phi(\beta,k;M) &= \frac{\partial}{\partial M_{ij}} \int \frac{\langle \sigma_{i}, \sigma_{j} \rangle \exp{\left(\beta \sum_{i,j=1}^{n} M_{ij}\langle \sigma_{i},\sigma_{j} \rangle\right)}}{\int \exp{\left(\beta \sum_{i,j=1}^{n}M_{ij}\langle \tau_{i},\tau_{j} \rangle\right)}d\nu(\tau)}d\nu(\sigma)\\
    &=\beta \int \frac{\langle \sigma_{i},\sigma_{j} \rangle^{2}\exp{\left(\beta \sum_{i,j=1}^{n}M_{ij} \langle \sigma_{i},\sigma_{j} \rangle\right)}}{\int \exp{\left(\beta \sum_{i,j=1}^{n}M_{ij} \langle \tau_{i},\tau_{j} \rangle\right)}d\nu(\tau)}d\nu(\sigma)\\
    &- \beta\frac{1}{\left(\int \exp{\left(\beta \sum_{i,j=1}^{n}M_{ij} \langle \tau_{i},\tau_{j} \rangle\right)}d\nu(\tau)\right)^2}\times\\
    &\times \int \langle \tau_{i},\tau_{j} \rangle \exp{\left(\beta \sum_{i,j=1}^{n}M_{ij} \langle \tau_{i},\tau_{j} \rangle\right)}d\nu(\tau)\\
    &\times\int \langle \sigma_{i},\sigma_{j} \rangle \exp{\left(\beta \sum_{i,j=1}^{n}M_{ij} \langle \sigma_{i},\sigma_{j} \rangle\right)}d\nu(\sigma)\\
    &= \beta \int \frac{\langle \sigma_{i},\sigma_{j} \rangle^{2}\exp{\left(\beta \sum_{i,j=1}^{n}M_{ij} \langle \sigma_{i},\sigma_{j} \rangle\right)}}{\int \exp{\left(\beta \sum_{i,j=1}^{n}M_{ij} \langle \tau_{i},\tau_{j} \rangle\right)}d\nu(\tau)}d\nu(\sigma)\\
    &-\beta \left(\int \frac{\langle \sigma_{i},\sigma_{j} \rangle\exp{\left(\beta \sum_{i,j=1}^{n}M_{ij} \langle \sigma_{i},\sigma_{j} \rangle\right)}}{\int \exp{\left(\beta \sum_{i,j=1}^{n}M_{ij} \langle \tau_{i},\tau_{j} \rangle\right)}d\nu(\tau)}d\nu(\sigma)\right)^{2}\\
    &= \beta(\mu_{M}(\langle \sigma_{i},\sigma_{j}\rangle^2) - \mu_{M}(\langle \sigma_{i}, \sigma_{j} \rangle)^2)
\end{align}

Finally, the third one gives
\begin{align}
    &\partial_{ij}^{3}\Phi(\beta,k;M) = \frac{\partial}{\partial M_{ij}} \beta \int \frac{\langle \sigma_{i},\sigma_{j} \rangle^{2}\exp{\left(\beta \sum_{i,j=1}^{n}M_{ij} \langle \sigma_{i},\sigma_{j} \rangle\right)}}{\int \exp{\left(\beta \sum_{i,j=1}^{n}M_{ij} \langle \tau_{i},\tau_{j} \rangle\right)}d\nu(\tau)}d\nu(\sigma)\\ 
    &-\frac{\partial}{\partial M_{ij}}\beta \left(\int \frac{\langle \sigma_{i},\sigma_{j} \rangle\exp{\left(\beta \sum_{i,j=1}^{n}M_{ij} \langle \sigma_{i},\sigma_{j} \rangle\right)}}{\int \exp{\left(\beta \sum_{i,j=1}^{n}M_{ij} \langle \tau_{i},\tau_{j} \rangle\right)}d\nu(\tau)}d\nu(\sigma)\right)^{2}\\
    &= \beta^{2} \int \frac{\langle \sigma_{i},\sigma_{j}\rangle^3 \exp{\left(\beta \sum_{i,j=1}^{n}M_{ij}\langle \sigma_{i}, \sigma_{j}\rangle\right)}}{\int \exp{\left(\beta \sum_{i,j=1}^{n}M_{ij}\langle \tau_{i}, \tau_{j}\rangle\right)}d\nu(\tau)}d\nu(\sigma)\\
    &-\beta^{2} \int\frac{\langle \tau_{i}, \tau_{j}\rangle \exp{\left(\beta \sum_{i,j=1}^{n}M_{ij}\langle \tau_{i}, \tau_{j}\rangle\right)} d\nu(\tau)}{\int \exp{\left(\beta \sum_{i,j=1}^{n}M_{ij}\langle \tau_{i}, \tau_{j}\rangle\right)}d\nu(\tau)}\times \\
    &\times \int\frac{\langle \sigma_{i}, \sigma_{j}\rangle^{2} \exp{\left(\beta \sum_{i,j=1}^{n}M_{ij}\langle \sigma_{i}, \sigma_{j}\rangle\right)} d\nu(\sigma)}{\int \exp{\left(\beta \sum_{i,j=1}^{n}M_{ij}\langle \tau_{i}, \tau_{j}\rangle\right)}d\nu(\tau)}\\
    &-2\beta\int \frac{\langle \sigma_{i},\sigma_{j} \rangle\exp{\left(\beta \sum_{i,j=1}^{n}M_{ij} \langle \sigma_{i},\sigma_{j} \rangle\right)}}{\int \exp{\left(\beta \sum_{i,j=1}^{n}M_{ij} \langle \tau_{i},\tau_{j} \rangle\right)}d\nu(\tau)}d\nu(\sigma)\times\\
    &\times[\beta \int \frac{\langle \sigma_{i},\sigma_{j} \rangle^{2}\exp{\left(\beta \sum_{i,j=1}^{n}M_{ij} \langle \sigma_{i},\sigma_{j} \rangle\right)}}{\int \exp{\left(\beta \sum_{i,j=1}^{n}M_{ij} \langle \tau_{i},\tau_{j} \rangle\right)}d\nu(\tau)}d\nu(\sigma)\\
    &-\beta \left(\int \frac{\langle \sigma_{i},\sigma_{j} \rangle\exp{\left(\beta \sum_{i,j=1}^{n}M_{ij} \langle \sigma_{i},\sigma_{j} \rangle\right)}}{\int \exp{\left(\beta \sum_{i,j=1}^{n}M_{ij} \langle \tau_{i},\tau_{j} \rangle\right)}d\nu(\tau)}d\nu(\sigma)\right)^{2}]\\
    &= \beta^{2}(\mu_{M}(\langle \sigma_{i},\sigma_{j} \rangle^3) -3\mu_{M}(\langle \sigma_{i},\sigma_{j} \rangle^2)\mu_{M}(\langle\sigma_{i},\sigma_{j}\rangle) + 2\mu_{M}(\langle \sigma_{i},\sigma_{j}\rangle^3))
\end{align}

using that $|\langle \sigma_{i},\sigma_{j}\rangle|\leq 1$ and that $\mu_{M}(\sigma)$ is a probability measure we obtain that all expectations in the previous line are in $[-1,1]\subset \mathbb{R}$, so that
\begin{equation}
    \lVert \partial_{ij}^{3} \Phi \rVert_{\infty} \leq 6\beta^{2}.
\end{equation}

Then, applying the theorem we obtain
\begin{equation}
    |\E[\Phi(\beta,k;X)]-\E[\Phi(\beta,k;D)]| \leq \beta^{2}\left[\sum_{1 \leq i \leq j \leq n }\E(|X_{ij}|^{3}) + \sum_{1 \leq i \leq j \leq n}\E(|D_{ij}|^{3})\right]
\end{equation}

From \cite{regina} we have that
\begin{equation}
    \E(|D_{ij}|^{3}) = (\mu_{ij}^{3} + 2\sigma_{ij}^{2})\left(\frac{2}{\sqrt{2\pi}}\sigma_{ij}e^{-\frac{1}{2}\theta_{ij}^2}\right)-\mu_{ij}(1-2F(\theta_{ij})) - \mu_{ij}\sigma_{ij}^{2}(1-2F(\theta_{ij}))
\end{equation}
where
\begin{equation}
\begin{split}
    \theta_{ij}&=\frac{\mu_{ij}}{\sigma_{ij}}\\
    F(a)&=\frac{1}{\sqrt{2\pi}} \int_{-\infty}^{\infty} e^{-\frac{1}{2}y^{2}}dy
\end{split}
\end{equation}
then, considering we are dealing with the centered case we get
\begin{equation}
    \E(|D_{ij}|^{3}) \lesssim \sigma_{ij}^{3}. 
\end{equation} 
Remembering the Assumption \ref{assum:1} we are using, we conclude 
\begin{equation}
    |\E[\Phi(\beta,k;X)]-\E[\Phi(\beta,k;D)]| \lesssim \beta^{2} \sum_{i\leq j}^{N} (c_{ij}\nu_{ij} + \nu_{ij}^{\frac{3}{2}}),
\end{equation}
and this completes the proof of the theorem.
\end{proof}

\section{Lower bound of $\text{SDP}(M_{r}-M_{s})$}\label{sec:appB}

In this appendix, we find a lower bound for $\text{SDP}(M_{r}- M_{s})$ where $M_{i}$ represents the matrix of edge's means in the case we have $i$ communities. We will prove the following theorem.

\begin{teo} Let $r,s \in \mathbb{N}$ with $r>s\geq 2$ and define $m \coloneqq \frac{n}{rs}$, we also assume $m\geq 2$. Let $M_{r}$ and $M_{s}$ be the matrices of edge's means in the hypothesis where we have $r$ and $s$ communities, respectively. We suppose also that in each hypothesis the communities have the same size and the means of edges inside and outside communities are homogeneous. These means are given by $M_{in}$ and $M_{out}$, respectively, and $M_{in} > M_{out}$. The following is true for all matrices $M_{r}$ and $M_{s}$ 
\begin{equation}
    \text{SDP}(M_{r}-M_{s}) \geq \frac{2n^2(M_{in}-M_{out})}{r^2s^2}.
\end{equation}
\end{teo}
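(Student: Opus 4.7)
The plan is to exhibit a single feasible $X\in \text{PSD}_1(n)$ and read off the lower bound on $\langle M_r-M_s, X\rangle$ directly. Writing $M_r = M_{out}\,\mathbb{1}\mathbb{1}^{T}+(M_{in}-M_{out})Z_r$ for the membership matrix $Z_r$ of the $r$-partition, and analogously for $Z_s$, we get $M_r-M_s=(M_{in}-M_{out})(Z_r-Z_s)$, so it will suffice to produce $X\in \text{PSD}_1(n)$ with $\langle Z_r-Z_s, X\rangle \geq 2m^2$.

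The combinatorial heart of the argument is a two-level pigeonhole. Let $R_1,\ldots,R_r$ denote the $r$-communities and $S_1,\ldots,S_s$ the $s$-communities. For each fixed $R_i$, since $\sum_{j=1}^{s}|R_i\cap S_j|=|R_i|=sm$, at least one index $j$ must satisfy $|R_i\cap S_j|\geq m$; otherwise the sum would be bounded by $s(m-1)<sm$. Assigning each $R_i$ to such a ``heavy'' $S_j$ distributes $r$ items among $s<r$ bins, so by pigeonhole some bin $S_{j^{*}}$ is heavy for at least two distinct $R_i$'s, say $R_{i_1}$ and $R_{i_2}$. I then choose subsets $U\subseteq R_{i_1}\cap S_{j^{*}}$ and $V\subseteq R_{i_2}\cap S_{j^{*}}$, each of cardinality exactly $m$.

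With $U,V$ fixed, I define $v\in\{-1,0,+1\}^n$ by $v_i=+1$ for $i\in U$, $v_i=-1$ for $i\in V$, and $v_i=0$ elsewhere; and set $X := vv^{T} + D$, where $D$ is diagonal with entries $D_{ii}=1-v_i^{2}\in\{0,1\}$. Both summands are PSD and $X_{ii}=v_i^{2}+(1-v_i^{2})=1$, so $X\in \text{PSD}_1(n)$. Because $(Z_r-Z_s)_{ii}=0$, the diagonal correction contributes nothing and $\langle Z_r-Z_s,X\rangle=v^{T}(Z_r-Z_s)v$. The only pairs $(i,j)$ producing a nonzero term are the cross pairs with one endpoint in $U$ and the other in $V$: such vertices lie in the same $s$-community $S_{j^{*}}$ but in distinct $r$-communities, so $(Z_r-Z_s)_{ij}=0-1=-1$ while $v_iv_j=-1$, yielding $+1$ per ordered pair. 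With $2|U||V|=2m^{2}$ such ordered pairs, $v^{T}(Z_r-Z_s)v=2m^{2}$, and multiplying by $M_{in}-M_{out}>0$ produces $\text{SDP}(M_r-M_s)\geq \frac{2n^{2}(M_{in}-M_{out})}{r^{2}s^{2}}$.

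The main obstacle I anticipate is the pigeonhole itself: naively fixing a single $s$-community and looking for two large $r$-intersections inside it can fail when the partitions are close to a refinement, and a direct averaging argument only yields a product of intersections of order $m^{2}/s$, too weak by a factor of $s^{2}$. The two-level pigeonhole above closes this gap and uses the balanced assumption together with $r>s$ in an essential way; the diagonal patch $D$ is a standard device to lift a partial $\pm 1$ vector into a matrix feasible for $\text{PSD}_1(n)$.
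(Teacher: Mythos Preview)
Your proof is correct and considerably simpler than the paper's. The paper proves the same bound via an extensive case analysis in Appendix~B: it splits on whether $s\mid r$, and within the divisible case further distinguishes configurations that are ``close to'' a refinement from those that are not. In each case a different test matrix is used (a block-diagonal $Z$ with all-ones $sm\times sm$ blocks, or a sign-alternating block matrix $\tilde Z$), and the worst-case $M_s$ for that test matrix is identified by a combinatorial optimization over ``slot fillings''; the final bound $2m^2(M_{in}-M_{out})$ emerges as the minimum over all cases.

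Your two-level pigeonhole bypasses all of this: the first pigeonhole (averaging over $S_j$'s inside a fixed $R_i$) guarantees an intersection of size at least $m$, and the second (pigeonholing $r$ heavy assignments into $s<r$ bins) guarantees that some $S_{j^*}$ contains two such large pieces from distinct $r$-communities. The resulting rank-one plus diagonal test matrix $X=vv^T+D$ is feasible for $\text{PSD}_1(n)$ and picks out exactly the $2m^2$ cross pairs with $(Z_r-Z_s)_{ij}=-1$ and $v_iv_j=-1$. This works uniformly for every pair of balanced partitions, with no case split, and does not even require the hypothesis $m\geq 2$.

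The only thing the paper's longer route buys is sharper case-dependent constants (e.g.\ $(rs^2-s^3)m^2$ or $4(s-1)m^2$ in certain regimes), which may be of independent interest but are irrelevant for the theorem as stated, since the minimum over all cases is exactly your $2m^2$.
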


\begin{proof}
First, observe that we can fix $M_{r}$ because this only means that we are fixing a enumeration for the vertices and communities, which are irrelevant for community detection. We then fix $M_{r}$ such that the vertices are enumerated increasingly within the enumeration of the communities, that is: the first community $C_{1}$ has the first $|C_{1}|$ vertices, the second community $C_{2}$ has the following $|C_{2}|$ vertices and so on. With that, $M_{r}$ has the following shape
\[
        \setlength{\arraycolsep}{0pt}
  \setlength{\delimitershortfall}{0pt}
  M_{r} = \begin{pmatrix}
    \,\fbox{$M_{in}$} &  &  & \,  \\
    \, & \fbox{$M_{in}$} &  & M_{out}\, \\
    \,M_{out} &  & \ddots & \, \\
    \, &  &  & \fbox{$M_{in}$}\, \\
  \end{pmatrix}
\]
where the $r$ communities have the same size by assumption so that each block has size $\frac{n}{r}\times \frac{n}{r} =sm \times sm$, remembering $m=\frac{n}{rs}$.

For simplicity, we try to focus our analysis in what happens inside the $r$ smaller communities. We do this by choosing $Z\in \text{PSD}_{1}(n)$ with non-zero entries only within the diagonal blocks of $M_{r}$. When this is not possible we create a ``perturbed" version $\tilde{Z} \in \text{PSD}_{1}(n)$ to do the analysis. With that in mind, we divide the proof into the following cases.
\begin{itemize}
    \item[$\diamond$] \textbf{Case 1:} When $s\nmid r$, the configuration of communities $\mathcal{C}$ where each of the $r$ smaller communities are entirely contained in a big community is impossible. In this case, we do a special construction that allows us to find a lower bound in the right order with the $Z$ mentioned;
    \item[$\diamond$] \textbf{Case 2:} When $s \mid r$, the above configuration is possible. Then, we will need to subdivide again into cases to deal with this. 
        \begin{itemize}
        \item[$\star$] \textbf{Subcase 2.1:} If the configuration is the previous mentioned or ``near" it we need to choose the mentioned $\tilde{Z}$ to ensure the bound is of the right order.
        \item[$\star$] \textbf{Subcase 2.2:} If the configuration is ``far" enough from the previous mentioned, we can choose the same $Z$ as in the case $s\nmid r$ and have the right order bound.
    \end{itemize}
\end{itemize}
With that, the bound will be the lowest obtained from all cases.

\begin{figure}[H]
    \centering
    \begin{tikzpicture}

    \tikzstyle{group} = [ellipse, draw=cyan, thick, minimum width=3cm, minimum height=2cm, fill=cyan!10, inner sep=0pt]
    \tikzstyle{dot} = [circle, fill=black, minimum size=2pt, inner sep=0pt] 
    \tikzstyle{subgroup} = [ellipse, draw=cyan, thick, minimum width=1cm, minimum height=0.8cm, inner sep=0pt]

    \node[group, label=above:1] (circle1) at (0, 2) {}; 
    \node[group, label=above:2] (circle2) at (3, 2) {}; 
    \node[group, label=below:3] (circle3) at (1.5, 0) {}; 

    \node[subgroup] (subgroup1a) at (-0.6, 2.0) {}; 
    \node[subgroup] (subgroup1b) at (0.6, 2.0) {}; 

    \node[subgroup] (subgroup2a) at (2.4, 2.0) {}; 
    \node[subgroup] (subgroup2b) at (3.6, 2.0) {}; 

    \node[subgroup] (subgroup3a) at (0.9, 0.0) {}; 
    \node[subgroup] (subgroup3b) at (2.1, 0.0) {}; 

    \foreach \x/\y in {-0.7/2.2, -0.5/2.2, -0.7/2.0, -0.5/2.0, -0.7/1.8, -0.5/1.8} {
        \node[dot] at (\x,\y) {}; 
    }

    \foreach \x/\y in {0.5/2.2, 0.7/2.2, 0.5/2.0, 0.7/2.0, 0.5/1.8, 0.7/1.8} {
        \node[dot] at (\x,\y) {}; 
    }

    \foreach \x/\y in {2.3/2.2, 2.5/2.2, 2.3/2.0, 2.5/2.0, 2.3/1.8, 2.5/1.8} {
        \node[dot] at (\x,\y) {}; 
    }

    \foreach \x/\y in {3.5/2.2, 3.7/2.2, 3.5/2.0, 3.7/2.0, 3.5/1.8, 3.7/1.8} {
        \node[dot] at (\x,\y) {}; 
    }

    \foreach \x/\y in {0.8/0.2, 1.0/0.2, 0.8/0.0, 1.0/0.0, 0.8/-0.2, 1.0/-0.2} {
        \node[dot] at (\x,\y) {}; 
    }

    \foreach \x/\y in {2.0/0.2, 2.2/0.2, 2.0/0.0, 2.2/0.0, 2.0/-0.2, 2.2/-0.2} {
        \node[dot] at (\x,\y) {}; 
    }

    \end{tikzpicture}
    \caption{Configuration $\mathcal{C}$ in the case $r=6$, $s=3$ and $m=2$}
    \label{fig:configurationC}
\end{figure}

\begin{obs}
    What we call ``right order" is a bound of order $m^{2}$, which makes sense in the calculations of the Type I and II errors in the text.
\end{obs}

We start with the case where $s$ does not divide $r$.

\underline{\textbf{Case 1. $s\nmid r$}:}
The idea is to fix a positive semidefinite matrix $Z$ with diagonal elements equal to $1$, that is, a $Z\in \text{PSD}_{1}(n)$, and use the definition of SDP function to find a configuration of $s$ communities such that its matrix of edge's means $M^{*}_{s}$ minimizes the inner product of $M_{r}-M^{*}_{s}$ with that specific $Z$. With that, we would have
\begin{equation}\label{eq:183}
    \text{SDP}(M_{r}-M_{s}) \geq \langle M_{r}-M_{s}, Z \rangle \geq \langle M_{r}-M^{*}_{s}, Z \rangle.
\end{equation}
for all possible $M_{s}$, \textit{i.e.}, matrix of edge's means in the case we have $s$ communities.  

We fix $Z$ to be the following $n\times n$ matrix 
\[
  \setlength{\arraycolsep}{0pt}
  \setlength{\delimitershortfall}{0pt}
  Z \coloneqq \begin{pmatrix}
    \,\resizebox{!}{1em}{\fbox{$1$}} &  &  & \,  \\
    \, & \resizebox{!}{1em}{\fbox{$1$}} &  & 0\, \\
    \,0&  & \ddots & \, \\
    \, &  &  & \resizebox{!}{1em}{\fbox{$1$}}\, \\
  \end{pmatrix}
\]
where the blocks, as in $M_{r}$, are of size $sm\times sm$. 

Actually, we do not find a configuration of $s$ communities with matrix of edge's means $M^{*}_{s}$ defined in equation \ref{eq:183} but find a lower bound on the inner product valid for all possible matrices $M_{s}$.

We observe that, since $s$ does not divide $r$, we can put  at maximum $\left\lfloor \frac{rm}{sm} \right\rfloor =\left\lfloor \frac{r}{s} \right\rfloor$ communities of the $r$ smaller ones (of size $sm$) entirely within each one of the $s$ bigger communities (of size $rm$). Then, in each one of the bigger communities it remains
 \begin{equation}
     rm - sm\floor[\bigg]{\frac{r}{s}}
 \end{equation}
vertices which are not inside any smaller communities. To construct the bound, we thus suppose that there is a ``ghost" community $\mathcal{G}$ outside the largest ones where the remainning elements will be part of. We denote this ``not allowed" configuration of bigger communities as $C^{*}$, where what we mean by ``allowed" configuration is one where the vertices are all inside one and only one of the $s$ bigger communities (which gives a corresponding $M_{s}$ matrix).

For example, let us suppose that $r=8$, $s=3$, $m=2$ such that the $s$ bigger communities have size $rm=16$ and the $r$ smaller ones have size $sm=6$ vertices. The image below represents the configuration in this case where the $4$ remaining vertices in each bigger community are part of the denoted $\mathcal{G}$ community.

\begin{figure}[H]
    \centering
    \begin{tikzpicture}
        \tikzstyle{group} = [ellipse, draw=cyan, thick, minimum width=3cm, minimum height=2cm, fill=cyan!10, inner sep=0pt]
        \tikzstyle{dot} = [circle, fill=black, minimum size=2pt, inner sep=0pt] 
        \tikzstyle{center} = [circle, dashed, draw=cyan, thick, minimum size=2cm]
        \tikzstyle{subgroup} = [ellipse, draw=cyan, thick, minimum width=1cm, minimum height=0.8cm, inner sep=0pt]

        \node[center, label=above:$\mathcal{G}$] (G) at (0, 0.5) {}; 

        \draw[thick, cyan] (G) -- (-2.8, 2); 
        \draw[thick, cyan] (G) -- (2.8, 2);  
        \draw[thick, cyan] (G) -- (0, -1.5); 

        \node[group, label=above:1] (group1) at (-2, 2) {};  
        \node[subgroup] (subgroup1a) at (-2.5, 2.3) {}; 
        \node[subgroup] (subgroup1b) at (-2.5, 1.7) {}; 
        
        \foreach \x/\y in {-2.6/2.5, -2.4/2.5, -2.6/2.3, -2.4/2.3, -2.7/2.4, -2.3/2.4} {
            \node[dot] at (\x,\y) {};
        }
        \foreach \x/\y in {-2.6/1.7, -2.4/1.7, -2.6/1.5, -2.4/1.5, -2.5/1.6, -2.3/1.6} {
            \node[dot] at (\x,\y) {};
        }
        
        \foreach \x/\y in {-1.0/2.5, -1.0/1.5, -1.7/2.0, -1.3/2.0} {
            \node[dot] at (\x,\y) {};
        }

        \node[group, label=above:2] (group2) at (2, 2) {};  
        \node[subgroup] (subgroup2a) at (2.5, 2.3) {}; 
        \node[subgroup] (subgroup2b) at (2.5, 1.7) {}; 
        
        \foreach \x/\y in {2.4/2.5, 2.6/2.5, 2.4/2.3, 2.6/2.3, 2.7/2.4, 2.3/2.4} {
            \node[dot] at (\x,\y) {};
        }
        \foreach \x/\y in {2.6/1.7, 2.4/1.7, 2.6/1.5, 2.4/1.5, 2.5/1.6, 2.3/1.6} {
            \node[dot] at (\x,\y) {};
        }
        
        \foreach \x/\y in {1.0/2.5, 1.0/1.5, 1.7/2.0, 1.3/2.0} {
            \node[dot] at (\x,\y) {};
        }

        \node[group, label=below:3] (group3) at (0, -2) {};  
        \node[subgroup] (subgroup3a) at (-0.5, -2.3) {}; 
        \node[subgroup] (subgroup3b) at (0.5, -2.3) {}; 
        
        \foreach \x/\y in {-0.6/-2, -0.4/-2, -0.6/-2.4, -0.4/-2.4, -0.5/-2.2, -0.7/-2.2} {
            \node[dot] at (\x,\y) {};
        }
        \foreach \x/\y in {0.6/-2, 0.4/-2, 0.6/-2.4, 0.4/-2.4, 0.5/-2.2, 0.7/-2.2} {
            \node[dot] at (\x,\y) {};
        }
        
        \foreach \x/\y in {0.7/-1.5, 0.2/-1.5, -0.3/-1.5, -0.8/-1.5} {
            \node[dot] at (\x,\y) {};
        }
    \end{tikzpicture}
    \caption{Example of a ghost community $\mathcal{G}$}
    \label{fig:ghost-community}
\end{figure}
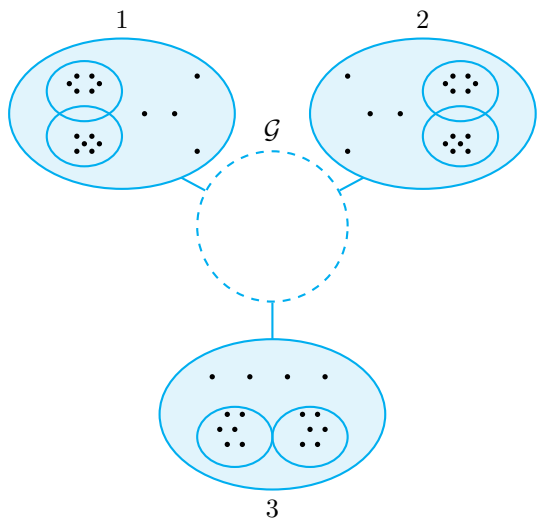

The goal becomes to show that this configuration of vertices, that gives us a configuration of $M_{in}$'s and $M_{out}$'s on the edges gives us a positive and correct order lower bound for equation \ref{eq:183}.

As the positive semidefinite matrix $Z$ has non-zero entries only on the blocks diagonal and these are the same ones where we have $M_{in}$ in $M_{r}$, for the inner product we need to focus only on this part of the matrices. We can have a $M_{in}$ or $M_{out}$ entry of $M_{s}$ in these diagonal blocks. In the first case, the corresponding entry of $M_{r}-M_{s}$ would be $M_{in}-M_{in}=0$ and in the second case it would be $M_{in}-M_{out}$. Then, to ensure we have a non-zero lower bound, it suffices to ensure that we have a positive number of entries of the second kind while still minimizing its number. For this purpose, first we show the following. 

\begin{claim}\label{claim:1}
\begin{equation}\label{eq:133}
    r(sm)^2 \geq s\floor[\bigg]{\frac{r}{s}}(sm)^2 + s\left(rm-sm\floor[\bigg]{\frac{r}{s}}\right)^2
\end{equation}
\end{claim}
where the term on the LHS is the number of $M_{in}$ entries of $M_{r}$, the first term on the RHS is the number of $M_{in}$ entries of the $C^{*}$ configuration of bigger matrices related to vertices inside the smaller communities, and the second term is the number of $M_{in}$ entries of $C^{*}$ related to the vertices that are left out of these smaller communities but are united in the ``ghost" community $\mathcal{G}$. In other words, this inequality says that even if we paired the $M_{in}$ entries of $M_{r}$ to the $M_{in}$ entries of the configuration $C^{*}$ it would still remain sufficient $M_{in}$'s from $M_{r}$ to be paired with the $M_{out}$'s from the same configuration $\mathcal{C^{*}}$. The bound then will become
\begin{equation}
B \coloneqq(LHS-RHS)(M_{in}-M_{out}).
\end{equation}

\begin{proofclaim}
Calculating
\begin{equation}
    \begin{split}
    &r(sm)^2 \geq s\floor[\bigg]{\frac{r}{s}}(sm)^2 + s\left(rm-sm\floor[\bigg]{\frac{r}{s}}\right)^2 \\
    \Leftrightarrow \quad &r(sm)^2 \geq s\floor[\bigg]{\frac{r}{s}}(sm)^2 + sr^2m^2 -2rs^2m^2\floor[\bigg]{\frac{r}{s}} + s^3m^2\floor[\bigg]{\frac{r}{s}}^2\\
    \Leftrightarrow \quad &rs \geq s^2\floor[\bigg]{\frac{r}{s}}\left(1+ \floor[\bigg]{\frac{r}{s}}\right) + r^2 -2rs\floor[\bigg]{\frac{r}{s}}
    \end{split}
\end{equation}
now, we remember that by the Euclidean division algorithm, we can write
\begin{equation}\label{eq:remainder}
    r = s \floor[\bigg]{\frac{r}{s}} + t \Rightarrow \floor[\bigg]{\frac{r}{s}} = \frac{r-t}{s}
\end{equation}
with the remainder $t\leq s-1$. Substituting $\left\lfloor \frac{r}{s} \right\rfloor$ above we obtain
\begin{equation}
    \begin{split}
        &rs \geq s^2\left(\frac{r-t}{s}\right) + s^2\left(\frac{r-t}{s}\right)^2 + r^2 -2rs\left(\frac{r-t}{s}\right)\\
        \Leftrightarrow \quad &rs - sr + st - r^2 +2rt - t^2 - r^2 +2r^2 -2rt \geq 0\\
        \Leftrightarrow \quad &st-t^2 \geq 0 \\
        \Leftrightarrow \quad &(s-t)t \geq 0
    \end{split}
\end{equation}
which is valid for $t\leq s-1$. \claimqed
\end{proofclaim}

Now, we show that $B$ is in fact a bound, that is, all allowed configurations of vertices in $M_{s}$ gives an inner product $\langle M_{r} - M_{s}, Z \rangle$ greater than $B$.

Let us call the bigger communities $s_{i}$ for $i \in \{1,2,...,s\}$ and the smaller ones $r_{i}$ for $i \in \{1,2,...,r\}$. Consider one of the $s$ biggest communities, for example, $s_{1}$. We want to have its $rm$ vertices configured such that it is maximized the number of $M_{in}$ entries related to these vertices in the diagonal blocks we are considering. In other words, we want to minimize $M_{in}-M_{out}$ (because it is maximized the entries $M_{in}-M_{in}=0$) on these blocks as mentioned on the paragraph above Claim \ref{claim:1}. To achieve such minimization, we first need the following step.
\begin{claim}
    If we have $a_{i}$ vertices of $s_{1}$ in $r_{i}$ for $i \in \{1,2,...,r\}$, then we have $a_{i}^{2}$ edges with mean $M_{in}$ coming from $s_{1}$ in the diagonal block corresponding to $r_{i}$.
\end{claim}
\begin{proofclaim}

Let us consider $r_{1}$ for concreteness and enumerate the vertices inside $s_{1}$ which are also inside $r_{1}$ as $v_{1},v_{2},...,v_{a_{1}}$. Consider $v_{1}$, by definition, we have entries $M_{in}$ of $s_{1}$ coming from this vertice in $a_{1} + (a_{1} -1)$ positions of the the block: $a_{1}$ counting the connection of $v_{1}$ with itself and the other vertices and, by symmetry, $a_{1}-1$ counting the connections of the other vertices to $v_{1}$ minus the double count of $v_{1}$. For $v_{2}$ we have the same counting but excluding the connection with $v_{1}$ already counted, that is, we count $a_{1} -1 + (a_{1} -2)$ entries $M_{in}$. Repeating the argument, we conclude that we have  $a_{1}^{2}$ elements $M_{in}$ of $s_{1}$ in the diagonal block of $r_{1}$. In fact, considering the contribution of all $a_{1}$ vertices we obtain
\begin{equation}
\begin{split}
    a_{1} + (a_{1}-1) + a_{1}-1 + (a_{1}-2) + ...+ (1) + 1 &=  a_{1} + a_{1}-1 + ...+ 1 + (a_{1}-1+ a_{1}-2 + ...+ 1)\\
    &= \frac{a_{1}(a_{1}+1)}{2} + \frac{(a_{1}-1)a_{1}}{2} = a_{1}^2.
\end{split}
\end{equation}
The same reasoning is valid for the other smaller communities $r_{2},r_{3},...,r_{r}$ and the claim follows. \claimqed
\end{proofclaim}

With that, the problem of maximizing the $M_{in}-M_{in}$ (minimizes $M_{in}-M_{out}$) entries related to $s_1$ becomes: to intersect the $rm$ vertices from $s_{1}$ with the vertices of the $r$ smaller communities such that the sum 
\begin{equation} \label{eq:191}
    S_{1}(C) := A_{1}^2 + A_{2}^2 + ... + A_{r}^2
\end{equation}
is maximized, where $A_{i}$ represents the number of vertices from $s_1$ in the $i$-th smaller community and $C$ represents a configuration of vertices in $s$ communities (allowed or not). In other terms, by the previous claim, this sum represents the number of edges with mean $M_{in}$ of $s_{1}$ also related to a connection inside the $r$ smaller communities.

We argue that the configuration $C^{*}$ with sum 
\begin{equation}
    S_{1}(C^{*}) = (sm)^2 + ... + (sm)^2 + \left(rm-sm\floor[\bigg]{\frac{r}{s}}\right)^2 + 0+ ...+0=\floor[\bigg]{\frac{r}{s}}(sm)^2 + \left(rm-sm\floor[\bigg]{\frac{r}{s}}\right)^2
\end{equation}
is the optimal one (observe that this is the $RHS$ of equation \ref{eq:133} divided by $s$ because it is considering only the first bigger community $s_{1}$). In other words, the best configuration is the one where we fill the \textbf{slots} $A_{i}$ in the maximum capacity $(sm)^2$ until it is left a number of vertices less than $sm$ which are also put in the same slot together, that is, $\left(rm-sm\floor[\bigg]{\frac{r}{s}}\right)^2$ edges inside the ``ghost" community $\mathcal{G}$. The $0+...+0$ terms indicate how many smaller communities do not intersect with $s_{1}$. For example, on the configuration of Figure \ref{fig:ghost-community} where $r=8$ we would have $8-3=5$ terms zero because we have $2$ smaller communities entirely contained on $s_1$ plus a intersection with ``ghost" community, that is, $2+1=3$ non-zero terms. 

\begin{obs}
    Observe that even though we have written the fuller smaller communities first in the sum, they do not necessarily correspond to the first enumerate smaller communities.
\end{obs}

In fact, beginning in \textbf{any} configuration $C$ (allowed or not) we can end up in the configuration $C^{*}$ putting more vertices together one at a time. The inverse operations to this, have two possible forms. To conclude that $C^{*}$ maximized the sum we want thus to show that doing these inverse operations can only makes our sum smaller. The operations are the following.

\textbf{Add a vertex from a non-empty slot to a empty one:} In this case, by the elementary inequality
\begin{equation}\label{eq:193}
    (x_{1}+x_{2}+...+x_{N})^{2} \geq x_{1}^{2} + x_{2}^{2} + ... + x_{N}^{2}
\end{equation}
for $x_{i}\in \mathbb{R}^{+}$ with $i \in [N]$ for any $N\in \mathbb{N}$, the sum decreases. This is because, with the $LHS$ representing the non-empty slot, if we put $x_{1}=1$ element of it alone on a empty slot we would obtain
\begin{equation}
    (1+x_{2}+...+x_{N})^{2} \geq 1^{2} + (x_{2}^{2} + ... + x_{N}^{2}) 
\end{equation}
and then the sum, in fact, decreases.

\textbf{Add a vertex from a non-empty slot to another non-empty slot with fewer vertices:} In this case, we need to show that for all $y\geq 1$ integer we have
\begin{equation} \label{eq:195}
    x^2 + (x-y)^2 \geq (x-1)^2 + (x-y+1)^2.
\end{equation}
Calculating:
\begin{equation}
    \begin{split}
        &x^2 + (x-y)^2 \geq (x-1)^2 +(x-y+1)^2\\
        \Leftrightarrow \quad &x^2 + x^2 - 2xy + y^2 \geq x^2 -2x + 1 + x^2 -2xy + 2x + y^2 -2y +1\\
        \Leftrightarrow \quad &-2y+1 \leq 0\\
        \Leftrightarrow \quad &-y \leq -1\\
        \Leftrightarrow \quad &y \geq 1
    \end{split}
\end{equation}

The same reason above is valid for the other $s-1$ bigger communities, such that
\begin{equation}
    \max_{C}{S_{1}(C)} = \max_{C}{S_{2}(C)} = ... = \max_{C}{S_{s}(C)} = S_{i}(C^{*})
\end{equation}
for any $i \in \{1,2,...,s\}$. Then
\begin{equation}
    \max_{C}{S_{1}(C)} + ... + \max_{C}{S_{s}(C)} = \max_{C}{(S_{1}(C)+...+S_{s}(C))} \geq \max_{C' \text{allowed}}{S_{1}(C')+...+S_{s}(C')} 
\end{equation}
where the equality is due to the previous equation and the last maximum is restricted to allowed configurations $C'$.

Finally, we obtain that the bound we had is in fact a bound, that is, from the $rs^{2}m^{2}$ entries with $M_{in}$ of $M_{r}$, we can subtract the $M_{in}$ entries of any allowed configuration of $M_{s}$ and obtain the positive lower bound
\begin{equation}\label{eq:199}
\begin{split}
     rs^2m^2 - \max_{C' \text{allowed}}{(S_{1}(C')+...+S_{s}(C'))} &\geq rs^2m^2 - s\max_{C}{S_{1}(C)}> 0\\
     &= rs^{2}m^{2} - sS_{1}(C^{*})>0
\end{split}
\end{equation}
for all valid configurations $C'$ where the last inequality comes from Claim \ref{claim:1}.

The bound we achieve is then
\begin{equation}
    \text{SDP}(M_{r}-M_{s}) \geq \left(rs^2m^{2} - s\left(s^2\floor[\bigg]{\frac{r}{s}}m^{2} - rm^{2} + s\floor[\bigg]{\frac{r}{s}}m^{2}\right) \right)(M_{in}-M_{out})
\end{equation}
or in term of the remainder $t$ defined in \ref{eq:remainder}
\begin{equation} \label{eq:138}
    \boxed{\text{SDP}(M_{r}-M_{s}) \geq [s^2t-st^2]m^2(M_{in}-M_{out}) \quad\text{$\forall M_{s}$ \, with \,$s\nmid r$}.}
\end{equation}

\begin{obs}
    The most important idea of this part was to relax the calculation to include the not allowed configurations. If we had not done that, we would have had to worry about all possible relative configurations of vertices in $r$ and $s$ communities at the same time to find the bound, which would be a very difficult problem.
\end{obs}

\underline{\textbf{Case 2. $s\mid r$}:} We divide it further into two subcases.

\underline{\textbf{Subcase 2.1:}} We have $s$ divides $r$ and the $r$ smaller communities are totally contained in the $s$ bigger ones, i.e., we have configuration $\mathcal{C}$ or $r-2$ smaller communities are totally contained in the $s$ bigger ones and in the two smaller communities that remain there are less than $m$ elements of each one of them in some of the bigger community. 

\begin{obs}
    By the division condition it is easy to see that $r-2$ is the second largest possible number of smaller communities totally contained in the biggest ones.
\end{obs}
For example, for $s=3$, $r=6$, $m=2$ we can have as in figure \ref{fig:subcase1} where there is $1<2=m$ element of each of the two smaller communities (not totally contained in the bigger ones) in the bigger communities $1$ and $2$.
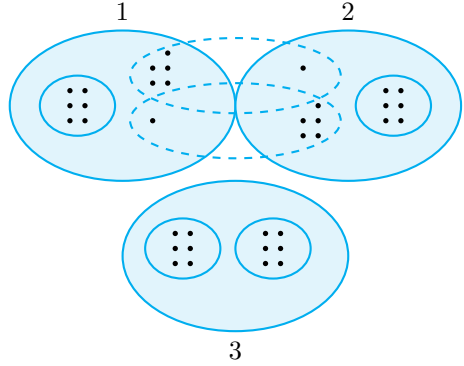
\begin{figure}[H]
    \centering
    \begin{tikzpicture}

    \tikzstyle{group} = [ellipse, draw=cyan, thick, minimum width=3cm, minimum height=2cm, fill=cyan!10, inner sep=0pt]
    \tikzstyle{dot} = [circle, fill=black, minimum size=2pt, inner sep=0pt] 
    \tikzstyle{subgroup} = [ellipse, draw=cyan, thick, minimum width=1cm, minimum height=0.8cm, inner sep=0pt]

    \node[group, label=above:1] (circle1) at (0, 2) {}; 
    \node[group, label=above:2] (circle2) at (3, 2) {}; 
    \node[group, label=below:3] (circle3) at (1.5, 0) {}; 

    \node[subgroup] (subgroup1a) at (-0.6, 2.0) {}; 
    \node[subgroup] (subgroup2b) at (3.6, 2.0) {}; 
    \node[subgroup] (subgroup3a) at (0.8, 0.1) {}; 
    \node[subgroup] (subgroup3b) at (2.0, 0.1) {}; 

    \foreach \x/\y in {-0.7/2.2, -0.5/2.2, -0.7/2.0, -0.5/2.0, -0.7/1.8, -0.5/1.8} {
        \node[dot] at (\x,\y) {};
    }

    \foreach \x/\y in {3.5/2.2, 3.7/2.2, 3.5/2.0, 3.7/2.0, 3.5/1.8, 3.7/1.8} {
        \node[dot] at (\x,\y) {};
    }
    
    \foreach \x/\y in {0.7/0.3, 0.9/0.3, 0.7/0.1, 0.9/0.1, 0.7/-0.1, 0.9/-0.1} {
        \node[dot] at (\x,\y) {};
    }
    
    \foreach \x/\y in {1.9/0.3, 2.1/0.3, 1.9/0.1, 2.1/0.1, 1.9/-0.1, 2.1/-0.1} {
        \node[dot] at (\x,\y) {};
    }

    \foreach \x/\y in {2.4/2.5, 2.6/2.0, 2.4/1.8, 2.6/1.8, 2.4/1.6, 2.6/1.6} {
        \node[dot] at (\x,\y) {};
    }

    \foreach \x/\y in {0.4/2.5, 0.6/2.5, 0.4/2.3, 0.6/2.3, 0.6/2.7, 0.4/1.8} {
        \node[dot] at (\x,\y) {};
    }

    \draw[cyan, thick, dashed] (1.5, 2.4) ellipse [x radius=1.4cm, y radius=0.5cm];
    \draw[cyan, thick, dashed] (1.5, 1.8) ellipse [x radius=1.4cm, y radius=0.5cm];

    \end{tikzpicture}
    \caption{Example of subcase $2.1$}
    \label{fig:subcase1}
\end{figure}

For the configurations of vertices of the subcase $2.1$, instead of the previous $Z$ we consider another $\tilde{Z} \in \text{PSD}_{1}(n)$ of the following shape: $\tilde{Z}$ is the matrix with $s$ diagonal big blocks formed by sub-blocks of size $sm$ and these sub-blocks alternating $+1$ and $-1$. The other elements are zero.

For example, for $r=6$, $s=2$, $m=2$ and $n=24$ we have
\begin{equation}\label{eq:ztilde}
   \tilde{Z} = \setlength{\arraycolsep}{0pt}
  \setlength{\delimitershortfall}
  {0pt}\setlength{\fboxsep}{1pt}
  \begin{pmatrix}
    \,\begin{matrix}
        \framebox[1cm][c]{\,$\begin{matrix} 1 & 1 \\
        1 & 1 \end{matrix}$\,} & \framebox[1cm][c]{\,$\begin{matrix} -1 & -1 \\
        -1 & -1 \end{matrix}$\,} & \framebox[1cm][c]{\,$\begin{matrix} 1 & 1 \\
        1 & 1 \end{matrix}$\,}\\
        \framebox[1cm][c]{\,$\begin{matrix} -1 & -1 \\
        -1 & -1 \end{matrix}$\,} & \framebox[1cm][c]{\,$\begin{matrix} 1 & 1 \\
        1 & 1 \end{matrix}$\,}& \framebox[1cm][c]{\,$\begin{matrix} -1 & -1 \\
        -1 & -1 \end{matrix}$\,}\\
        \framebox[1cm][c]{\,$\begin{matrix} 1 & 1 \\
        1 & 1 \end{matrix}$\,} & \framebox[1cm][c]{\,$\begin{matrix} -1 & -1 \\
        -1 & -1 \end{matrix}$\,} & \framebox[1cm][c]{\,$\begin{matrix} 1 & 1 \\
        1 & 1 \end{matrix}$\,}
  \end{matrix} &0  \,  \\
  
    \, 0& \begin{matrix}
        \framebox[1cm][c]{\,$\begin{matrix} 1 & 1 \\
        1 & 1 \end{matrix}$\,} & \framebox[1cm][c]{\,$\begin{matrix} -1 & -1 \\
        -1 & -1 \end{matrix}$\,} & \framebox[1cm][c]{\,$\begin{matrix} 1 & 1 \\
        1 & 1 \end{matrix}$\,}\\
        \framebox[1cm][c]{\,$\begin{matrix} -1 & -1 \\
        -1 & -1 \end{matrix}$\,} & \framebox[1cm][c]{\,$\begin{matrix} 1 & 1 \\
        1 & 1 \end{matrix}$\,}& \framebox[1cm][c]{\,$\begin{matrix} -1 & -1 \\
        -1 & -1 \end{matrix}$\,}\\
        \framebox[1cm][c]{\,$\begin{matrix} 1 & 1 \\
        1 & 1 \end{matrix}$\,} & \framebox[1cm][c]{\,$\begin{matrix} -1 & -1 \\
        -1 & -1 \end{matrix}$\,} & \framebox[1cm][c]{\,$\begin{matrix} 1 & 1 \\
        1 & 1 \end{matrix}$\,}
  \end{matrix} 
  \end{pmatrix}
\end{equation}
Such $\tilde{Z}$ is positive semidefinite because it is the following sum of rank one matrices with positive coefficients
\begin{equation}
\begin{split}
    \tilde{Z} &= \begin{pmatrix*}[r] 1 \\-1 \\1 \\\vdots \\1 \\0 \\\vdots \\0 \end{pmatrix*} \begin{pmatrix*}
    \smash[b]{\block{\frac{r}{s} \, \text{times}}}  & 0 & \cdots & 0  
\end{pmatrix*}\\
&+ \underbrace{\cdots}_{\text{total of} \, s \, \text{terms}} + \begin{pmatrix*}[r] 0 \\\vdots \\0 \\1 \\\vdots \\1 \\-1 \\1 \end{pmatrix*} \begin{pmatrix*}
    0 & \cdots & 0 & 1 & \cdots & 1 & -1 & 1 
\end{pmatrix*}
\end{split}
\end{equation}

\begin{claim}
We can consider, without loss of generality, that in the case of the communities totally contained (configuration $\mathcal{C}$), the enumeration of the vertices in the biggest communities is consecutive (in the smaller communities it already is because of the fixed $M_{r}$ we are using). This means consider that the $\frac{r}{s}$ first smaller communities are in the first biggest one and, generalizing, the $\frac{rn}{s}$-th smaller communities are in the n-th biggest one. The figure below illustrate this.
\end{claim}

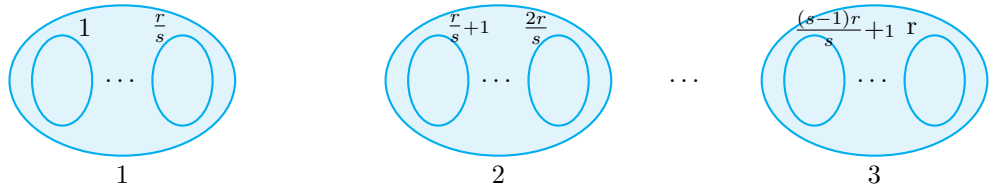
\begin{figure}[H]
    \centering
    \begin{tikzpicture}
        \tikzstyle{group} = [ellipse, draw=cyan, thick, minimum width=3cm, minimum height=2cm, fill=cyan!10, inner sep=0pt]
        \tikzstyle{subgroup} = [ellipse, draw=cyan, thick, minimum width=0.8cm, minimum height=1.2cm, inner sep=0pt]

        \node[group, label=below:1] (ellipse1) at (0, 0) {}; 
        \node[group, label=below:2] (ellipse2) at (5, 0) {}; 
        \node[group, label=below:3] (ellipse3) at (10, 0) {}; 

        \node[subgroup] (sub1) at (-0.8, 0) {}; 
        \node[subgroup] (sub2) at (0.8, 0) {}; 
        
        \node[subgroup] (sub3) at (4.2, 0) {}; 
        \node[subgroup] (sub4) at (5.8, 0) {}; 
        
        \node[subgroup] (sub5) at (9.2, 0) {}; 
        \node[subgroup] (sub6) at (10.8, 0) {}; 
        
        \node at (-0.5, 0.7) {1};  
        \node at (0.5, 0.7) {$\frac{r}{s}$}; 
        
        \node at (4.6, 0.7) {$\frac{r}{s}{\scriptstyle +1}$};  
        \node at (5.5, 0.7) {$\frac{2r}{s}$}; 
        
        \node at (9.6, 0.7) {$\frac{(s-1)r}{s}{+\scriptstyle 1}$};  
        \node at (10.5, 0.7) {r}; 

        \node at (7.5, 0) {\dots};

        \node at (0, 0) {\dots};
        \node at (5, 0) {\dots};
        \node at (10, 0) {\dots};
    \end{tikzpicture}
    \caption{We consider this enumeration without loss of generality}
    \label{fig:enumeration}
\end{figure}

\begin{proofclaim}
In fact, considering this relative numbering of the smaller communities inside the bigger ones (the communities themselves, not its elements) as the identity element of the symmetric group $S_{r}$, we can get from any other enumeration to this one by a finite number of transpositions (just do it one community at a time). This results in permuting the rows and columns of $M_{s}$ and consequently of $M_{r}-M_{s}$ by sets of $sm$ elements.
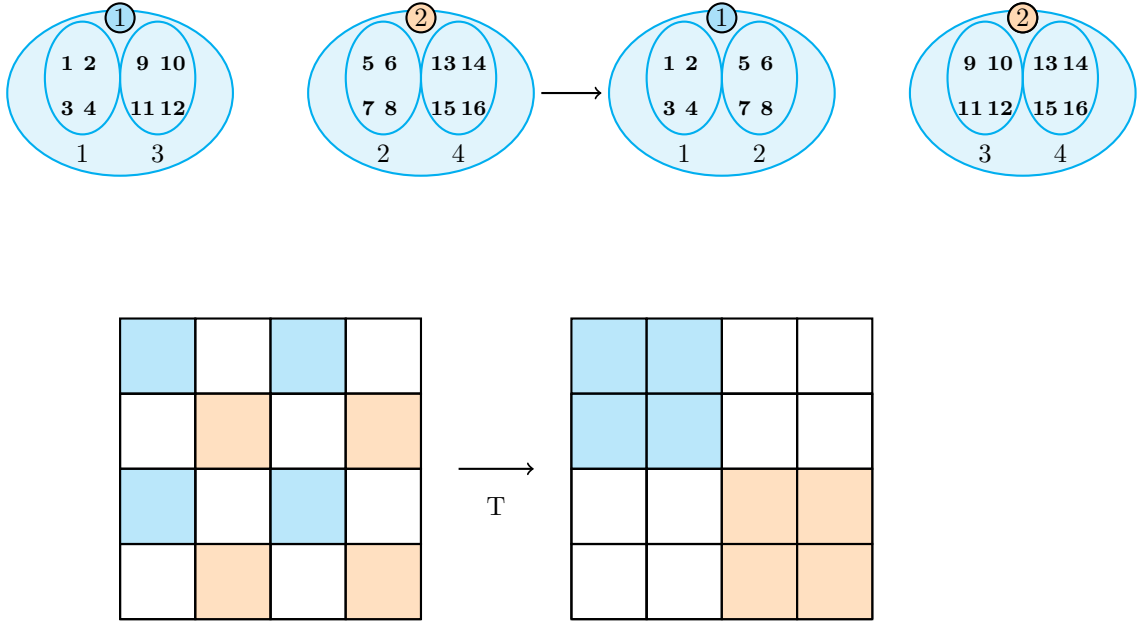
\begin{figure}[H]
    \centering
    \begin{tikzpicture}

        \tikzstyle{group} = [ellipse, draw=cyan, thick, minimum width=3cm, minimum height=2.2cm, fill=cyan!10, inner sep=0pt]
        \tikzstyle{subgroup} = [ellipse, draw=cyan, thick, minimum width=1cm, minimum height=1.5cm, inner sep=0pt]
        \tikzstyle{labelcirc} = [draw, circle, inner sep=1pt, thick]
        \tikzstyle{matrixcell} = [draw, thick, minimum size=1cm]
        \tikzstyle{highlight} = [draw, thick, fill opacity=0.6]
        \tikzstyle{numstyle} = [font=\bfseries\footnotesize] 

        \node[group] (group1) at (-4, 2) {};
        \node[subgroup] (sub1a) at (-4.5, 2.2) {}; 
        \node[subgroup] (sub1b) at (-3.5, 2.2) {}; 
        \node[labelcirc, fill=cyan!30] at (-4, 3) {1};

        \node[numstyle] at (-4.7, 2.4) {1};
        \node[numstyle] at (-4.4, 2.4) {2};
        \node[numstyle] at (-4.7, 1.8) {3};
        \node[numstyle] at (-4.4, 1.8) {4};
        \node[numstyle] at (-3.7, 2.4) {9};
        \node[numstyle] at (-3.3, 2.4) {10};
        \node[numstyle] at (-3.7, 1.8) {11};
        \node[numstyle] at (-3.3, 1.8) {12};
        
        \node at (-4.5, 1.2) {1};
        \node at (-3.5, 1.2) {3};
        
        \node[group] (group2) at (0, 2) {};
        \node[subgroup] (sub2a) at (-0.5, 2.2) {}; 
        \node[subgroup] (sub2b) at (0.5, 2.2) {}; 
        \node[labelcirc, fill=orange!30] at (0, 3) {2};
        
        \node[numstyle] at (-0.7, 2.4) {5};
        \node[numstyle] at (-0.4, 2.4) {6};
        \node[numstyle] at (-0.7, 1.8) {7};
        \node[numstyle] at (-0.4, 1.8) {8};
        \node[numstyle] at (0.3, 2.4) {13};
        \node[numstyle] at (0.7, 2.4) {14};
        \node[numstyle] at (0.3, 1.8) {15};
        \node[numstyle] at (0.7, 1.8) {16};
        
        \node at (-0.5, 1.2) {2};
        \node at (0.5, 1.2) {4};
        
        \node[group] (group3) at (4, 2) {};
        \node[subgroup] (sub3a) at (3.5, 2.2) {}; 
        \node[subgroup] (sub3b) at (4.5, 2.2) {}; 
        \node[labelcirc, fill=cyan!30] at (4, 3) {1};
        
        \node[numstyle] at (3.3, 2.4) {1};
        \node[numstyle] at (3.6, 2.4) {2};
        \node[numstyle] at (3.3, 1.8) {3};
        \node[numstyle] at (3.6, 1.8) {4};
        \node[numstyle] at (4.3, 2.4) {5};
        \node[numstyle] at (4.6, 2.4) {6};
        \node[numstyle] at (4.3, 1.8) {7};
        \node[numstyle] at (4.6, 1.8) {8};

        \node at (3.5, 1.2) {1};
        \node at (4.5, 1.2) {2};
        
        \node[group] (group4) at (8, 2) {};
        \node[subgroup] (sub4a) at (7.5, 2.2) {}; 
        \node[subgroup] (sub4b) at (8.5, 2.2) {}; 
        \node[labelcirc, fill=cyan!30] at (8, 3) {1};
        
        \node[numstyle] at (7.3, 2.4) {9};
        \node[numstyle] at (7.7, 2.4) {10};
        \node[numstyle] at (7.3, 1.8) {11};
        \node[numstyle] at (7.7, 1.8) {12};
        \node[numstyle] at (8.3, 2.4) {13};
        \node[numstyle] at (8.7, 2.4) {14};
        \node[numstyle] at (8.3, 1.8) {15};
        \node[numstyle] at (8.7, 1.8) {16};
        
        \node at (7.5, 1.2) {3};
        \node at (8.5, 1.2) {4};
        \node[labelcirc, fill=orange!30] at (8, 3) {2};

        \draw[thick, ->] ($(group3)!.6!(group2)$) -- ($(group3)!.4!(group2)$);

        \draw[matrixcell] (-4,-5) grid (0,-2);

        \draw[highlight, fill=cyan!40] (-4,-2) rectangle (-3,-1); 
        \draw[highlight] (-3,-2) rectangle (-2,-1); 
        \draw[highlight, fill=cyan!40] (-2,-2) rectangle (-1,-1); 
        \draw[highlight] (-1,-2) rectangle (0,-1); 

        \draw[highlight] (-4,-3) rectangle (-3,-2);
        \draw[highlight,fill=orange!40] (-3,-3) rectangle (-2,-2);
        \draw[highlight] (-2,-3) rectangle (-1,-2); 
        \draw[highlight,fill=orange!40] (-1,-3) rectangle (0,-2); 
        
        \draw[highlight, fill=cyan!40] (-4,-4) rectangle (-3,-3); 
        \draw[highlight] (-3,-4) rectangle (-2,-3); 
        \draw[highlight, fill=cyan!40] (-2,-4) rectangle (-1,-3); 
        \draw[highlight] (-1,-4) rectangle (0,-3);

        \draw[highlight] (-4,-5) rectangle (-3,-4); 
        \draw[highlight, fill=orange!40] (-3,-5) rectangle (-2,-4); 
        \draw[highlight] (-2,-5) rectangle (-1,-4); 
        \draw[highlight, fill=orange!40] (-1,-5) rectangle (0,-4);
        
        \draw[matrixcell] (2,-5) grid (6,-2);

        \draw[highlight, fill=cyan!40] (2,-2) rectangle (3,-1); 
        \draw[highlight, fill=cyan!40] (3,-2) rectangle (4,-1);
        \draw[highlight] (4,-2) rectangle (5,-1); 
        \draw[highlight] (5,-2) rectangle (6,-1); 

        \draw[highlight, fill=cyan!40] (2,-3) rectangle (3,-2);
        \draw[highlight, fill=cyan!40] (3,-3) rectangle (4,-2);
        \draw[highlight] (4,-3) rectangle (5,-2); 
        \draw[highlight] (5,-3) rectangle (6,-2); 

        \draw[highlight] (2,-4) rectangle (3,-3);
        \draw[highlight] (3,-4) rectangle (4,-3);
        \draw[highlight, fill=orange!40] (4,-4) rectangle (5,-3); 
        \draw[highlight, fill=orange!40] (5,-4) rectangle (6,-3);
        
        \draw[highlight] (2,-5) rectangle (3,-4);
        \draw[highlight] (3,-5) rectangle (4,-4);
        \draw[highlight, fill=orange!40] (4,-5) rectangle (5,-4); 
        \draw[highlight, fill=orange!40] (5,-5) rectangle (6,-4);

        \draw[thick,->] (0.5,-3) -- (1.5,-3);
        \node at (1,-3.5) {T};

    \end{tikzpicture}
    \caption{Operation done to the matrix $M_{s}$, the color represents the $M_{in}$ entries of it}
    \label{fig:matrix-operation}
\end{figure}

Doing the same permutation in the corresponding blocks of $\tilde{Z}$, this operation maintains the positive semidefinite property of $\tilde{Z}$ because we permute equally rows and columns (and, of course, the property of having diagonal $1$). Then, calculating the inner product of the permuted matrices we end up with the same inner product, that is, the same bound. \claimqed
\end{proofclaim}

We consider, by now, just the configuration $\mathcal{C}$ where all $r$ smaller communities are totally contained in the bigger ones and with the enumeration described above. 
\begin{claim} In the case we have all smaller communities contained in the bigger ones, we claim that the bound is
    \begin{equation}\label{eq:203}
    \boxed{\text{SDP}(M_{r}-M_{s}) \geq \langle M_{r}-M_{s}, \tilde{Z}\rangle = 2s^3\left[\sum_{i=1}^{\frac{r}{s}-1}\left(\frac{r}{s}-i\right)(-1)^{i+1}\right]m^2(M_{in}-M_{out})}
\end{equation}
\end{claim}

\begin{obs}
It is important to notice that the reasoning here is slightly different from the previous case. In the previous case, we were considering for the fixed $Z$ all the configurations where $r \nmid s$. Now, for the fixed matrix $\tilde{Z}$, given that we also fixed the enumeration \textit{w.l.o.g.}, we need to find the bound just for one case: the case where the smaller communities are totally contained in the bigger ones. Then, the bound is just the inner product  $\langle M_{r}-M_{s}, \tilde{Z}\rangle$ with $M_{s}$ representing this specific configuration. We do the same thing after for the remaining configurations in this subcase $2.1$ where $r-2$ smaller communities are totally contained in the $s$ bigger ones and in the two communities that remain there are less than $m$ elements of each one of them in some of the bigger community. 
\end{obs}

\begin{proofclaim}In fact, in this case we have 
\begin{equation}\label{eq:205}
  M_{r} - M_{s} = \setlength{\arraycolsep}{1.5pt}
  \begin{pmatrix}
  \, \fbox{\,$\begin{matrix}
    \,\framebox[0.5cm][c]{$0$} & \framebox[0.5cm][c]{$-1$} & \cdots & \framebox[0.5cm][c]{$-1$}\,  \\
    \, \framebox[0.5cm][c]{$-1$} & \framebox[0.5cm][c]{$0$} & \cdots & \framebox[0.5cm][c]{$-1$}\, \\
    \, \cdots & \framebox[0.5cm][c]{$-1$} & \cdots & \vdots \, \\
    \, \framebox[0.5cm][c]{$-1$} & \framebox[0.5cm][c]{$-1$} & \cdots & \framebox[0.5cm][c]{$0$} \, \\
  \end{matrix}$ \,} & & 0 \, \\
     & \ddots & \\
    0 & & \fbox{\,$\begin{matrix}
    \,\framebox[0.5cm][c]{$0$} & \framebox[0.5cm][c]{$-1$} & \cdots & \framebox[0.5cm][c]{$-1$}\,  \\
    \, \framebox[0.5cm][c]{$-1$} & \framebox[0.5cm][c]{$0$} & \cdots & \framebox[0.5cm][c]{$-1$}\, \\
    \, \cdots & \framebox[0.5cm][c]{$-1$} & \cdots & \vdots \, \\
    \, \framebox[0.5cm][c]{$-1$} & \framebox[0.5cm][c]{$-1$} & \cdots & \framebox[0.5cm][c]{$0$} \,
  \end{matrix} $\,} 
  \end{pmatrix} (M_{in}-M_{out})
\end{equation}
where the smaller blocks have size $sm\times sm$ and repeats $\frac{r}{s}$ times inside the bigger one, and we have $s$ bigger blocks of size $rm\times rm$. The diagonal sub-blocks represent the $M_{in}$ entries of $M_{r}$ which are subtracted by the $M_{in}$ entries of $M_{s}$, given blocks with $0$ entries, the remaining sub-blocks represents the $M_{out}$ entries of $M_{r}$ subtract still by the $M_{in}$ entries of $M_{s}$ giving blocks with $-(M_{in}-M_{out})$ entries, and finally all entries outside the bigger blocks are $M_{out}-M_{out}=0$.

Let us consider what happens with the inner product of one of these big blocks with the corresponding block in $\tilde{Z}$ and only above the diagonal.
\begin{equation}\label{eq:206}
 \setlength{\arraycolsep}{2pt}
\begin{pmatrix}
    \,\framebox[0.5cm][c]{$0$} & \framebox[0.5cm][c]{$-1$} & \framebox[0.5cm][c]{$-1$} & \cdots & \framebox[0.5cm][c]{$-1$}\,  \\
    \, \framebox[0.5cm][c]{$-1$} & \framebox[0.5cm][c]{$0$} & \framebox[0.5cm][c]{$-1$} & \cdots & \framebox[0.5cm][c]{$-1$}\, \\
    \, \framebox[0.5cm][c]{$-1$} & \framebox[0.5cm][c]{$-1$} & \framebox[0.5cm][c]{$0$} & \cdots & \vdots \, \\
    \, \vdots & \vdots & \cdots & \cdots  & \framebox[0.5cm][c]{$-1$}\, \\
    \, \framebox[0.5cm][c]{$-1$} & \framebox[0.5cm][c]{$-1$} & \cdots & \cdots & \framebox[0.5cm][c]{$0$} \, \\
  \end{pmatrix}
  \cdot \setlength{\arraycolsep}{2pt}
\begin{pmatrix}
    \,\framebox[0.5cm][c]{$1$} & \framebox[0.5cm][c]{$-1$} & \framebox[0.5cm][c]{$1$} & 
    \cdots & \,  \\
    \, \framebox[0.5cm][c]{$-1$} & \framebox[0.5cm][c]{$1$} & \framebox[0.5cm][c]{$-1$} & \cdots & \, \\
    \, \framebox[0.5cm][c]{$1$} & \framebox[0.5cm][c]{$-1$} & \framebox[0.5cm][c]{$1$} & \cdots & \vdots \, \\
    \, \vdots & \vdots & \cdots & \cdots  & \framebox[0.5cm][c]{$-1$}\, \\
    \, &  & \cdots & \cdots & \framebox[0.5cm][c]{$1$} \, \\
  \end{pmatrix}(M_{in}-M_{out})
\end{equation}

The first diagonal above the principal one has $\frac{r}{s}-1$ blocks of $(sm)^2$ elements. Doing the inner product in this first diagonal we obtain
\begin{equation}
    \left(\frac{r}{s}-1\right)(-1)^{1+1}(sm)^{2}(M_{in}-M_{out}).
\end{equation}

In the second diagonal above the principal one we have $\frac{r}{s}-2$ elements, but now the multiplication of the corresponding blocks gives a negative number
\begin{equation}
    \left(\frac{r}{s}-2\right)(-1)^{1+2}(sm)^{2}(M_{in}-M_{out}).
\end{equation}

We repeat this until the $\left(\frac{r}{s}-1\right)$ diagonal above the principal one that gives us
\begin{equation}
    \left(\frac{r}{s}-\left(\frac{r}{s}-1\right)\right)(-1)^{1+\left(\frac{r}{s}-1\right)}(sm)^{2}(M_{in}-M_{out}).
\end{equation}
Considering that in equation \ref{eq:203} the $2$ factor comes from the symmetry in relation to the diagonal and one $s$ factor comes from counting all $s$ bigger blocks, summing the above equations give the proof of the claim. \claimqed
\end{proofclaim}

\begin{obs}\label{remark:9}
    We prove together with the next bound, the fact that this bound is positive and of right order.
\end{obs}

Let us now consider $r-2$ smaller communities totally contained in the bigger ones and in the $2$ communities that remain there are $k<m$ elements in each one of them inside some bigger one. Again, using the permutation arguments, we can suppose, without loss of generality, that the vertices of these $2$ smaller communities are in a specific configuration inside the bigger ones described in the next paragraph.

\begin{obs}
    In this case, it is possible that we would have first to make permutations to have the right enumeration of vertices inside the smaller ones relatively to the bigger ones (see next paragraph) and then do the permutation in blocks to have the two bigger communities involved just near each other (see next paragraph). 
\end{obs}

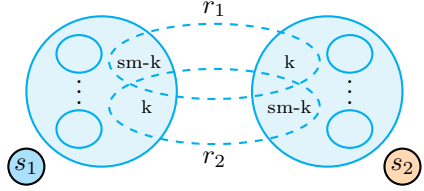
\begin{figure}[H]
    \centering
    \begin{tikzpicture}

        \tikzstyle{group} = [ellipse, draw=cyan, thick, fill=cyan!10, inner sep=0pt]
        \tikzstyle{subgroup} = [ellipse, draw=cyan, thick, fill=cyan!10, inner sep=0pt]
        \tikzstyle{labelcirc} = [draw, circle, inner sep=1pt, thick]

        \node[group] (circle1) at (0, 2) [minimum width=2cm, minimum height=2cm] {}; 
        \node[group] (circle2) at (3, 2) [minimum width=2cm, minimum height=2cm] {}; 

        \node[subgroup] (subgroup1a) at (-0.3, 2.5) [minimum width=0.6cm, minimum height=0.5cm] {}; 
        \node[subgroup] (subgroup1b) at (-0.3, 1.5) [minimum width=0.6cm, minimum height=0.5cm] {}; 

        \node[subgroup] (subgroup2a) at (3.3, 2.5) [minimum width=0.6cm, minimum height=0.5cm] {}; 
        \node[subgroup] (subgroup2b) at (3.3, 1.5) [minimum width=0.6cm, minimum height=0.5cm] {}; 

        \draw[cyan, thick, dashed] (1.5, 2.4) ellipse [x radius=1.4cm, y radius=0.5cm];
        \draw[cyan, thick, dashed] (1.5, 1.8) ellipse [x radius=1.4cm, y radius=0.5cm];

        \node at (0.5, 2.4) {\scriptsize sm-k};
        \node at (0.6, 1.8) {\scriptsize k};
        \node at (2.5, 2.4) {\scriptsize k};
        \node at (2.5, 1.8) {\scriptsize sm-k};

        \node at (-0.3, 2.1) {$\vdots$};
        \node at (3.3, 2.1) {$\vdots$};
        \node at (1.5, 3.1) {$r_1$};
        \node at (1.5, 1.1) {$r_2$};

        \node[labelcirc, fill=cyan!30] at (-1.0, 1) {$s_1$};  
        \node[labelcirc, fill=orange!30] at (4.0, 1) {$s_2$};   

    \end{tikzpicture}
    \caption{Example of subcase $2.1$ but not totally contained}
    \label{fig:example-subcase}
\end{figure}

 Let $s_{1}=s_{2}-1$ be the indices of the $2$ bigger communities that contain the not totally contained smaller ones. Consider the configuration in Figure \ref{fig:example-subcase} with $r_{1}=r_{2}-1$, that is, the $sm-k$ first elements of $r_{1}$ are in $s_{1}$ and the $k$ remaining ones are in $s_{2}$ and the first $k$ elements of $r_{2}$ are in $s_{1}$ and the $sm-k$ remaining ones are in $s_{2}$. In this case, in the diagonal blocks that represent $r_{1}$ and $r_{2}$ we have the scheme of Figure \ref{fig:matrix-visualization}.

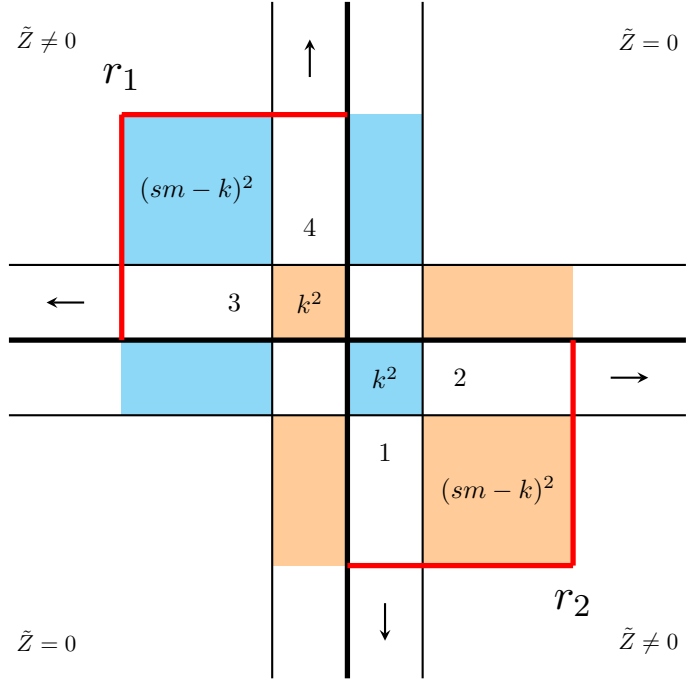
\begin{figure}[H]
    \centering
    \begin{tikzpicture}

        \tikzstyle{matrixcell} = [draw, thick, minimum size=1cm]
        \tikzstyle{axis_label} = [font=\large]

        \fill[cyan!40] (-3,2) rectangle (-2,3);
        \fill[cyan!40] (-2,2) rectangle (-1,3);
        \fill[cyan!40] (0,2) rectangle (1,3);

        \fill[cyan!40] (-3,1) rectangle (-2,2);
        \fill[cyan!40] (-2,1) rectangle (-1,2);
        \fill[cyan!40] (0,1) rectangle (1,2);

        \fill[orange!40] (-1,0) rectangle (0,1);
        \fill[orange!40] (1,0) rectangle (2,1);
        \fill[orange!40] (2,0) rectangle (3,1);

        \fill[cyan!40] (-3,-1) rectangle (-2,0);
        \fill[cyan!40] (-2,-1) rectangle (-1,0);
        \fill[cyan!40] (0,-1) rectangle (1,0);

        \fill[orange!40] (-1,-2) rectangle (0,-1);
        \fill[orange!40] (1,-2) rectangle (2,-1);
        \fill[orange!40] (2,-2) rectangle (3,-1);

        \fill[orange!40] (-1,-3) rectangle (0,-2);
        \fill[orange!40] (1,-3) rectangle (2,-2);
        \fill[orange!40] (2,-3) rectangle (3,-2);

        \draw[line width=0.7mm] (-4.5,0) -- (4.5,0);   
        \draw[thick] (-4.5,1) -- (4.5,1);   
        \draw[thick] (-4.5,-1) -- (4.5,-1);   
        \draw[line width=0.7mm] (0,-4.5) -- (0,4.5);   
        \draw[thick] (-1,-4.5) -- (-1,4.5);   
        \draw[thick] (1,-4.5) -- (1,4.5);   

        \draw[line width=0.7mm, red] (3.0,0.0) -- (3.0,-3.0);
        \draw[thick,->,>=stealth] (3.5,-0.5) -- (4,-0.5); 
        \draw[line width=0.7mm, red] (-3.0,0.0) -- (-3.0,3.0);
        \draw[thick,->,>=stealth] (-3.5,0.5) -- (-4,0.5); 

        \draw[line width=0.7mm, red] (-3.0,3.0) -- (0.0,3.0);
        \draw[thick,->,>=stealth] (-0.5,3.5) -- (-0.5,4); 
        \draw[line width=0.7mm, red] (0.0,-3.0) -- (3.0,-3.0);
        \draw[thick,->,>=stealth] (0.5,-3.5) -- (0.5,-4); 

    \node[axis_label] at (-4.0,4.0) {\scalebox{0.8}{$\tilde{Z} \neq 0$}};  
    \node[axis_label] at (4.0,4.0) {\scalebox{0.8}{$\tilde{Z} = 0$}};   
    \node[axis_label] at (-4.0,-4.0) {\scalebox{0.8}{$\tilde{Z} = 0$}}; 
    \node[axis_label] at (4.0,-4.0) {\scalebox{0.8}{$\tilde{Z} \neq 0$}};  
    
    \node[axis_label] at (-3.0, 3.5) {\scalebox{1.5}{$r_{1}$}};  
    \node[axis_label] at (3.0,-3.5) {\scalebox{1.5}{$r_{2}$}};  

    \node at (-0.5,1.5) {4};  
    \node at (-1.5,0.5) {3};  
    \node at (1.5,-0.5) {2};  
    \node at (0.5,-1.5) {1};  

    \node at (-0.5,0.5) {$k^{2}$};  
    \node at (0.5,-0.5) {$k^{2}$};  

    \node at (-2.0,2.0) {$(sm-k)^{2}$};  
    \node at (2.0,-2.0) {$(sm-k)^{2}$};   

    \end{tikzpicture}
    \caption{What happens inside $r_{1}$ and $r_{2}$ blocks. The cyan represents elements in $s_{1}$ and the orange in $s_{2}$.}
    \label{fig:matrix-visualization}
\end{figure}

\begin{obs}
    The division of $\tilde{Z}=0$ and $\tilde{Z}\neq0$ is done by the thicker horizontal and vertical middle lines. The divisions of $r_{1}$ and $r_{2}$ are done by these lines plus the red lines.
\end{obs}

\begin{obs}
    To help with visualization, the above image represents the middle of the matrix in equation \ref{eq:ztilde} (when considering only the bigger communities $s_{1}$ and $s_{2}$) but with an ``overlap'' between the big blocks which is represented here by the ``mixture'' of the colors cyan and orange.
\end{obs}

\begin{claim}
In the region that matters to us of $\tilde{Z}\neq 0$, the matrix $M_{r}-M_{s}$ is the same as in equation \ref{eq:205} except by the $4$ enumerated and not colored strips on Figure \ref{fig:matrix-visualization}.
\end{claim}
\begin{proofclaim}
In fact, the colored region intersecting $r_{1}$ and $\tilde{Z}\neq0$ represents $M_{in}$ coming from $r_{1}$ (because of the previous remark) and $M_{in}$ coming from $s_{1}$ or $s_{2}$ (because of the coloring), giving a $M_{in}-M_{in}=0$ entry. The same is valid for $r_{2}$. This gives part of the $0$ diagonal sub-blocks of equation \ref{eq:205}. However, the other part of these previous $0$ blocks become $1$'s rectangular blocks given by  the $4$ enumerated and not colored strips on Figure \ref{fig:matrix-visualization} (without considering their extension given by the arrows). In fact, in this case we still have $M_{in}$ coming from $r_1$ and $r_2$ but now $M_{out}$ (not colored) coming from $s_1$ and $s_2$. The continuation of the $4$ strips given by the arrows in the figure also have different entries from the matrix in equation \ref{eq:205}: in the case here these entries extending to the entire matrix is always $M_{out}-M_{out}=0$. \claimqed
\end{proofclaim}

When intersecting with the diagonal blocks of the smaller communities, that is, without the extension by the arrows,  these $4$ strips represents $M_{in}-M_{out}$ as already said. Then, in the bound of \ref{eq:203}, where before we had $0$ summing, now we have to \textbf{sum} 
\begin{equation}\label{eq:sum}
    4k(sm-k)(M_{in}-M_{out})
\end{equation}
where $k(sm-k)$ is the number of elements in each strip also inside the smaller communities blocks.

Outside the diagonal of the smaller communities, the strips represent $0=M_{out}-M_{out}$, as already said, then it cancels part of the bound in \ref{eq:203} where we not necessarily have $0$ entries. In fact, on this previous bound, by equation \ref{eq:206} we see that the strips are on the corners of the bigger blocks associated with $s_{1}$ and $s_{2}$ and the entries associated with these parts of the strip contributed to the bound with the sum
\begin{equation}\label{eq:subtract}
4\left[\sum_{i=1}^{\frac{r}{s}-1} (-1)^{i+1} k(sm)\right](M_{in}-M_{out}) = \begin{cases}
    0, & \frac{r}{s} -1 \quad \text{is even} \\
    4ksm(M_{in}-M_{out}), &  \frac{r}{s} -1 \quad \text{is odd}  \\
\end{cases}
\end{equation}
where the sum is in the vertical or horizontal direction, $k(sm)$ is the number of elements in the strip in each one of the $\frac{r}{s}-1$ smaller blocks remaining on each bigger one. Because now this contribution became $0$, we need to \textbf{subtract} it from the previous bound.

Then, using the next calculations for making explicit the alternating series part coming from the previous bound \ref{eq:203}, we obtain

\textbf{$\frac{r}{s}$ odd (or $\frac{r}{s}-1$ even)}

\begin{equation}
    \begin{split}
    2(sm)^{2}\left(\sum_{i=1}^{\frac{r}{s}-1}\left(\frac{r}{s}-i\right)(-1)^{i+1}\right)s
        &= 2s^{2}m^{2}\left(\frac{r}{s}-1-\left(\frac{r}{s}-2\right) + \frac{r}{s}-3 - \left(\frac{r}{s}-4\right)+ ...\right)\\
        &= 2s^{2}m^{2}(-1+2-3+4-5+...+(r/s -1))\\
        &= 2s^{3}m^{2}\frac{1}{2}\left(\frac{r}{s}-1\right) = rs^{2}m^{2} - s^{3}m^{2} > 0 
    \end{split}
\end{equation}
where the last inequality is because $r>s$.

\textbf{$\frac{r}{s}$ even (or $\frac{r}{s}-1$ odd)}

\begin{equation}
    \begin{split}
        2(sm)^{2}\left(\sum_{i=1}^{\frac{r}{s}-1}\left(\frac{r}{s}-i\right)(-1)^{i+1}\right)s
        &= 2s^{3}m^{2}(-1+2-3+4-...\\
        &+\left(\frac{r}{s} - \left(\frac{r}{s}-3\right)\right) - \left(\frac{r}{s}-\left(\frac{r}{s}-2\right)\right)+ \left(\frac{r}{s}- \left(\frac{r}{s}-1\right)\right))\\
        &= 2s^{3}m^{2}\left(-1+2-3+4-...- (r/s-3)+(r/s-2) +1\right)\\
        &=2s^{3}m^{2}\frac{1}{2}\left(\frac{r}{s}-2\right) + 2s^{3}m^{2} = rs^{2}m^{2} > 0
    \end{split}
\end{equation}

We see that in both cases we have a positive bound proportional to $m^{2}$ such that this confirms remark \ref{remark:9}. For this bound, coming from the previous bound and summing equation \ref{eq:sum} and subtracting equation \ref{eq:subtract}, we finally obtain

\textbf{$\frac{r}{s}$ odd (or $\frac{r}{s}-1$ even):}
\begin{equation}
    \text{SDP}(M_{r}-M_{s}) \geq [rs^{2}m^{2} - s^{3}m^{2} + 4k(sm-k)](M_{in}-M_{out})
\end{equation}
with $k=1,2,...,m-1$. The smaller bound is with $k=1$
\begin{equation}
    \boxed{\text{SDP}(M_{r}-M_{s}) \geq [rs^{2}m^{2} - s^{3}m^{2} + 4sm-4](M_{in}-M_{out})}
\end{equation}
where $4sm-4>0$ because $s,m\geq 2$.

\textbf{$\frac{r}{s}$ even (or $\frac{r}{s}-1$ odd):}
\begin{equation}
    \text{SDP}(M_{r}-M_{s}) \geq [rs^{2}m^{2} + 4k(sm-k) - 4ksm](M_{in}-M_{out}) = [rs^{2}m^{2} - 4k^{2}](M_{in}-M_{out})
\end{equation}
and the smaller bound is now with $k=m-1$
\begin{equation}
    \boxed{\text{SDP}(M_{r}-M_{s}) \geq [rs^{2}m^{2}-4(m-1)^{2}](M_{in}-M_{out}) \geq [rs^{2}m^{2} - 4m^{2}](M_{in}-M_{out}).} 
\end{equation}

\underline{\textbf{Subcase 2.2:}} Remaining cases where $s\mid r$. Here, we use the $Z$ matrix again
\[
  \setlength{\arraycolsep}{0pt}
  \setlength{\delimitershortfall}{0pt}
  Z =\begin{pmatrix}
    \,\resizebox{!}{1em}{\fbox{$1$}} &  &  & \,  \\
    \, & \resizebox{!}{1em}{\fbox{$1$}} &  & 0\, \\
    \,0&  & \ddots & \, \\
    \, &  &  & \resizebox{!}{1em}{\fbox{$1$}}\, \\
  \end{pmatrix}
\]

We can then use the same arguments from case $1$ to prove the following claim.

\begin{claim}
The bound is reached by a configuration with $r-2$ smaller communities totally contained in the bigger ones and in the $2$ remaining ones we have the previous division but now with $k=m$ as the image below. 
\end{claim}

\begin{figure}[H]
    \centering
    \begin{tikzpicture}

        \tikzstyle{group} = [ellipse, draw=cyan, thick, fill=cyan!10, inner sep=0pt]
        \tikzstyle{subgroup} = [ellipse, draw=cyan, thick, fill=cyan!10, inner sep=0pt]

        \node[group] (circle1) at (0, 2) [minimum width=2cm, minimum height=2cm] {}; 
        \node[group] (circle2) at (3, 2) [minimum width=2cm, minimum height=2cm] {}; 

        \node[subgroup] (subgroup1a) at (-0.3, 2.5) [minimum width=0.6cm, minimum height=0.5cm] {}; 
        \node[subgroup] (subgroup1b) at (-0.3, 1.5) [minimum width=0.6cm, minimum height=0.5cm] {}; 

        \node[subgroup] (subgroup2a) at (3.3, 2.5) [minimum width=0.6cm, minimum height=0.5cm] {}; 
        \node[subgroup] (subgroup2b) at (3.3, 1.5) [minimum width=0.6cm, minimum height=0.5cm] {}; 

        \draw[cyan, thick, dashed] (1.5, 2.4) ellipse [x radius=1.4cm, y radius=0.5cm];
        \draw[cyan, thick, dashed] (1.5, 1.8) ellipse [x radius=1.4cm, y radius=0.5cm];

        \node at (0.5, 2.4) {\scriptsize sm-m};
        \node at (0.6, 1.8) {\scriptsize m};
        \node at (2.5, 2.4) {\scriptsize m};
        \node at (2.5, 1.8) {\scriptsize sm-m};

        \node at (-0.3, 2.1) {$\vdots$};
        \node at (3.3, 2.1) {$\vdots$};

        \node at (-1.0, 1) {$s_1$};  
        \node at (4.0, 1) {$s_2$};   

    \end{tikzpicture}
    \caption{Subcase $2.2$ optimal configuration}
    \label{fig:subcase-2-configuration}
\end{figure}

\begin{proofclaim}
As we are using the same matrix $Z$ from \textbf{Case 1}, we can use the same reasoning of looking at one of the biggest community at a time. First, we observe that what we have here as remainder from subcase $2.1$ are the cases with $r-2$ or more smaller communities not totally contained in a bigger one. Then, by same reasoning as case $1$, we obtain that the bound needs to be achieved by a configuration where we have only $r-2$ smaller communities not totally contained in a bigger. This is because if we have more, we could use the operations described in equations \ref{eq:193} and \ref{eq:195} for the corresponding bigger community and improve the bound. 

We prove that the configuration that minimizes the bound is the one from the image above. In fact, for all the configurations where only $r-2$ smaller communities not totally contained in a bigger one,
$m$ represents the smallest possible number of vertices in a smaller community within one of the larger ones (remember: in the previous subcase we had $k<m$ such that now $k\geq m$). Then, using again
\begin{equation}
    x^{2} + (x-y)^{2} \geq  (x-1)^{2} + (x-y+1)^{2}
\end{equation}
for all $y\geq 1$, we obtain looking just to $s_{1}$, the corresponding $A_{r_1}^{2}$ and $A_{r_2}^2$ part of equation \ref{eq:191} as
\begin{equation}
    \begin{split}
        (sm-m)^{2} + m^{2} &\geq (sm-(m+1))^{2} + (m+1)^{2}\\
        &\geq (sm-m-1))^{2} + (m+1)^{2}\\
        &\geq (sm-m-2)^{2} + (m+2)^{2}\\
    \end{split}
\end{equation}
and so on, that is, the sum is larger when we consider $k=m$ than when we consider any other $k>m$. \claimqed

Calculating the bound, if we put all terms of \ref{eq:133} in one side and call it delta and doing the same for our case now we obtain
\begin{equation}
    \begin{split}
    \delta &= r(sm)^{2} - (r-2)(sm)^{2} - 2[(sm-m)^{2}+m^{2}]\\
    &= 2(sm)^{2} - 2[(sm-m)^{2}+m^{2}]\\
    &= 4(s-1)m^2
    \end{split}
\end{equation}

Thus, for $s\geq 2$
\begin{equation}
    \boxed{\text{SDP}(M_{r}-M_{s}) \geq 4(s-1)m^2}
\end{equation}
\end{proofclaim}

Summarizing the bounds obtained we have the following.

\begin{table}[h!]
\centering
\begin{tabular}{|c|c|c|}
\hline
\textbf{Case} & \textbf{Bound} & \textbf{Minimum Value} \\
\hline
\textbf{Case 1} & 
$\text{SDP}(M_{r}-M_{s}) \geq [(s^{2}t-st^{2})m^{2}](M_{in}-M_{out})$ & 
$\text{SDP}(M_{r}-M_{s}) \geq 2m^2(M_{in}-M_{out})$ \\
& & with $s=2$, $t=1$ \\
\hline
\textbf{Case 2, subcase 1, $\frac{r}{s}$ odd} & 
$\text{SDP}(M_{r}-M_{s}) \geq [(rs^{2}-s^{3})m^{2}](M_{in}-M_{out})$ & 
$\text{SDP}(M_{r}-M_{s}) \geq 4m^2(M_{in}-M_{out})$ \\
& & with $s=2$, $r=3$ \\
\hline
\textbf{Case 2, subcase 1, $\frac{r}{s}$ even} & 
$\text{SDP}(M_{r}-M_{s}) \geq [(rs^{2}-4)m^{2}](M_{in}-M_{out})$ & 
$\text{SDP}(M_{r}-M_{s}) \geq 8m^2(M_{in}-M_{out})$ \\
& & with $s=2$, $r=3$ \\
\hline
\textbf{Case 2, subcase 2} & 
$\text{SDP}(M_{r}-M_{s}) \geq [4(s-1)m^{2}](M_{in}-M_{out})$ & 
$\text{SDP}(M_{r}-M_{s}) \geq 4m^2(M_{in}-M_{out})$ \\
& & with $s=2$ \\
\hline
\end{tabular}
\caption{Summary of bounds obtained.}
\end{table}

Then, we conclude that the smallest bound is the minimum value for the first case, as we wanted. 

\end{proof}

\end{document}